\newenvironment{enumerater}{\begin{enumerate}[\upshape (1)]}%
{\end{enumerate}}
\newcommand{\pup}[1]{\textup{(}{#1}\textup{)}}
\newcommand{\lgrp}{$\ell$-group}
\newcommand{\lsgrp}{$\ell$-sub\-group}
\newcommand{\lidl}{$\ell$-i\-de\-al}
\newcommand{\lhom}{$\ell$-ho\-mo\-mor\-phism}
\newcommand{\eqdef}{\overset{\mathrm{def}}{=}}
\newcommand{\ci}{\sqsubseteq}
\newcommand{\sci}{\sqsubset}
\DeclareMathOperator{\Max}{Max}
\DeclareMathOperator{\Min}{Min}
\newcommand{\ga}{\alpha}
\newcommand{\gb}{\beta}
\newcommand{\gc}{\gamma}
\newcommand{\gd}{\delta}
\newcommand{\gs}{\sigma}
\newcommand{\go}{\omega}
\newcommand{\eps}{\varepsilon}
\newcommand{\fin}[1]{[{#1}]^{<\omega}}
\newcommand{\gD}{\Delta}
\newcommand{\gL}{\Lambda}
\newcommand{\gO}{\Omega}
\newcommand{\bgO}{\ol{\Omega}}
\newcommand{\sd}{\mathbin{\smallsetminus}}
\newcommand{\dzlat}{distributive $0$-lattice}
\newcommand{\cn}{completely normal}
\newcommand{\ol}[1]{\overline{#1}}
\newcommand{\pI}[1]{\bigl({#1}\bigr)}
\newcommand{\set}[1]{\left\{#1\right\}}
\newcommand{\setm}[2]{\set{{#1}\mid{#2}}}
\newcommand{\vecm}[2]{\left({#1}\mid{#2}\right)}
\newcommand{\seq}[1]{\langle{#1}\rangle}
\newcommand{\op}{\mathrm{op}}
\newcommand{\scp}[2]{({#1}\mid{#2})}
\newcommand{\id}{\mathrm{id}}
\newcommand{\ds}[2]{\operatorname{\downarrow}\nolimits_{#1}{#2}}
\newcommand{\us}[2]{\operatorname{\uparrow}\nolimits_{#1}{#2}}
\newcommand{\es}{\varnothing}
\newcommand{\res}{\mathbin{\restriction}}
\newcommand{\QQ}{\mathbb{Q}}
\newcommand{\conc}{\mathbin{\stackrel{\smallfrown}{}}}
\DeclareMathOperator{\Op}{Op}
\newcommand{\Ops}{\Op^-}
\DeclareMathOperator{\Idc}{Id_c}
\DeclareMathOperator{\Csc}{Cs_c}
\DeclareMathOperator{\Pow}{Pow}
\numberwithin{equation}{section}
\newtheorem*{stat}{\name}
\newcommand{\name}{testing}
\theoremstyle{plain}
\newtheorem{theorem}{Theorem}[section]
\newtheorem{proposition}[theorem]{Proposition}
\newtheorem{corollary}[theorem]{Corollary}
\newtheorem{lemma}[theorem]{Lemma}
\newtheorem{examplepf}[theorem]{Example}
\newtheorem{claim}{Claim}
\newtheorem*{sclaim}{Claim}
\theoremstyle{definition}
\newtheorem{definition}[theorem]{Definition}
\newtheorem{notation}[theorem]{Notation}
\newtheorem{problem}{Problem}
\theoremstyle{remark}
\newtheorem{remark}[theorem]{Remark}
\newtheorem*{note}{Note}
\newcommand{\qedc}{{\qed}~{\rm Claim~{\theclaim}.}}
\newcommand{\qedsc}{{\qed}~{\rm Claim.}}
\newenvironment{cproof}
{\begin{proof}[Proof of Claim.]}
{\qedc\renewcommand{\qed}{}\end{proof}}
\newenvironment{scproof}
{\begin{proof}[Proof of Claim.]}
{\qedsc\renewcommand{\qed}{}\end{proof}}
\numberwithin{figure}{section}
\numberwithin{table}{section}
\newcommand{\bx}{\boldsymbol{x}}
\newcommand{\kk}{\Bbbk}
\newcommand{\kkp}[1]{\kk^{(#1)}}
\newcommand{\fsh}{finitely shadowing}
\newcommand{\Fsh}{Finitely shadowing}
\newcommand{\cA}{\mathcal{A}}
\newcommand{\cB}{\mathcal{B}}
\title{Monotone-Cevian and finitely separable lattices}
\author[M. Plo\v{s}\v{c}ica]{Miroslav Plo\v{s}\v{c}ica}
\address{Faculty of Natural Sciences\\
\v{S}af\'arik's University\\
Jesenn\'a 5\\
04154 Ko\v{s}ice\\
Slovakia}
\email{miroslav.ploscica@upjs.sk}
\urladdr{https://ploscica.science.upjs.sk}
\author[F. Wehrung]{Friedrich Wehrung}
\address{Normandie Universit\'e, UNICAEN\\
CNRS UMR 6139, LMNO\\
14000 Caen\\
France}
\email{friedrich.wehrung01@unicaen.fr}
\urladdr{https://wehrungf.users.lmno.cnrs.fr}
\date{\today}
\subjclass[2020]{06A06; 06B25; 06D05; 06F20; 08A30}
\keywords{Lattice; distributive; completely normal; finitely separable; deviation; monotone; Cevian; lattice-ordered group; vector lattice; spectrum}
\begin{document}

\begin{abstract}
A distributive lattice with zero is \emph{\cn} if its prime ideals form a root system under set inclusion.
Every such lattice admits a binary operation $(x,y)\mapsto x\sd y$ satisfying the rules $x\leq y\vee(x\sd y)$ and $(x\sd y)\wedge(y\sd x)=0$ --- in short a \emph{deviation}.
In this paper we study the following additional properties of deviations: \emph{monotone} (i.e., isotone in~$x$ and antitone in~$y$) and \emph{Cevian} (i.e., $x\sd z\leq(x\sd y)\vee(y\sd z)$).
We relate those matters to \emph{finite separability} as defined by Freese and Nation.
We prove that every finitely separable \cn\ lattice has a monotone deviation.

We pay special attention to lattices of principal \lidl{s} of Abelian \lgrp{s} (which are always \cn).
We prove that for free Abelian \lgrp{s} (and also free $\kk$-vector
lattices) those lattices admit monotone Cevian deviations.
On the other hand, we construct an Archimedean \lgrp\ with strong unit whose principal \lidl\ lattice does not have a monotone deviation.
\end{abstract}

\maketitle

\section{Introduction}\label{S:Intro}
This paper is motivated by the investigation of principal \lidl\ lattices of  Abelian \lgrp{s}.
It has been known for a long time that those lattices are distributive with~$0$ and are \cn.
Recall (cf. Wehrung~\cite{MV1}) that a lattice~$D$ is \emph{\cn} if it is distributive, has a least element (usually denoted by~$0$), and for all $a,b\in D$ there are $x,y\in D$ such that $a\vee b=a\vee y=x\vee b$ whereas $x\wedge y=0$.
Equivalently, the prime ideals of~$D$ form a root system under set inclusion (cf. Monteiro~\cite{Mont1954}).
 It is an easy exercise to verify that $D$ is \cn\ if{f} it admits a deviation in the following sense.

\begin{definition}\label{D:Dev}
A binary operation~$\sd$, on a \dzlat~$D$, is a \emph{deviation on~$D$} if the relations $x\leq y\vee(x\sd y)$ and $(x\sd y)\wedge(y\sd x)=0$ both hold whenever $x,y\in D$.
The deviation~$\sd$ is
\begin{itemize}
\item
\emph{left isotone} if $x\leq x'$ implies that $x\sd y\leq x'\sd y$,
\item
\emph{right antitone} if $y\leq y'$ implies that $x\sd y'\leq x\sd y$,
\item\emph{monotone} if it is both left isotone and right antitone;
\item\emph{Cevian} if $x\sd z\leq(x\sd y)\vee(y\sd z)$ whenever $x,y,z\in D$;
\item\emph{monotone-Cevian} if it is both monotone and Cevian.
\end{itemize}
We say that the lattice~$D$ is \emph{Cevian} (resp., \emph{monotone-Cevian}) if it has a Cevian (resp., monotone-Cevian) deviation.
\end{definition}

Any homomorphic image of the principal \lidl\ lattice of an Abelian \lgrp\ is Cevian (cf. Wehrung~\cite{Ceva}).
It is easy to find small \cn\ lattices with a non-monotone or non-Cevian deviation.
However, in many cases the deviation can be``adjusted'' to become monotone and Cevian.
By Plo\v{s}\v{c}ica~\cite{Plo21}, every \cn\ lattice with at most~$\aleph_1$ elements is Cevian
(see also Plo\v{s}\v{c}ica and Wehrung~\cite{MV2}).
This does not extend to the cardinality~$\aleph_2$:
by~\cite{Ceva}, not every \cn\ lattice with~$\aleph_2$ elements is Cevian.

The existence of \emph{monotone} deviations on \cn\ lattices has not been investigated so far.
It is not very difficult to prove that a deviation on a (at most) countable
lattice can be adjusted to become monotone.
In this paper we use the idea of an adjustment to prove the much stronger result that
\emph{every finitely separable \cn\ lattice has a monotone deviation}.
Finite separability is here meant in the sense of Freese and Nation~\cite{FreNat2015}.
As an application, we prove that \emph{principal \lidl\ lattices of free Abelian \lgrp{s} are monotone-Cevian} (cf. Corollary~\ref{C:flg}).

The concept of finite separability, originally invented as one of the conditions that characterize projective lattices, seems to have potential for more applications.
In that direction, we shall establish two equivalent conditions for finite separability in lattices and, more generally, posets.
One of them (viz. Proposition~\ref{P:EquivFSWRC}) states that a poset is finitely separable if{f} it has a \emph{\fsh\ well-ordering}.
The other one (viz. Theorem~\ref{T:CharFS}) states that a poset is finitely separable if{f} it is a ``strong amalgam'' of finite posets over a lower finite poset.

In Section~\ref{S:MonAlt} we prove that any deviation on a distributive $0$-lattice with a finitely shadowing well-ordering can be adjusted to become monotone.
This implies the above-mentioned result (cf. Theorem~\ref{T:DevFS}). 
In Section~\ref{S:DevCX} we turn our attention to (necessarily \cn) lattices that arise as the lattices of all principal \lidl{s} of Abelian \lgrp{s}.
We verify that for free
Abelian \lgrp{s} (more generally, for free $\kk$-vector lattices), those lattices are finitely separable (cf. Proposition~\ref{P:OpEFS}),
which implies, with the help of the Belluce map, that they are monotone-Cevian (cf. Corollaries~\ref{C:fvl} and~\ref{C:flg}).
We also verify, invoking the main result of Plo\v{s}\v{c}ica and Wehrung~\cite{MV2}, that \emph{every finitely separable \cn\ lattice with at most~$\aleph_1$ elements is monotone-Cevian} (cf. Corollary~\ref{C:MonCev2}).

Our final achievement (cf. Section~\ref{S:lgrpCX}) is a construction of \emph{an Archimedean \lgrp\ with strong unit, of cardinality~$\aleph_1$, whose principal \lidl\ lattice does not have any left isotone or right antitone deviation}.
Hence, not only there are
\cn\ lattices without a monotone deviation; they can be constructed as principal \lidl\ lattices of  Abelian \lgrp{s}.
This is rather surprising,
in view of the above-mentioned result for free Abelian \lgrp{s}.

Let us introduce some notation.
Order-preserving maps between posets will be called \emph{isotone}, order-reversing ones are \emph{antitone}.
We will denote by~$\Min{X}$ (resp., $\Max{X}$) the set of all minimal (resp., maximal) elements of a subset~$X$ in a poset~$P$.

\begin{notation}\label{Not:dsus}
For any set~$P$, any $A\subseteq P$, and any binary relation~$\lhd$ on~$P$,
 \begin{align*}
 \ds{\lhd}{A}&\eqdef\setm{x\in P}{(\exists a\in A)(x\lhd a)}\,,\\
 \us{\lhd}{A}&\eqdef\setm{x\in P}{(\exists a\in A)(a\lhd x)}\,.
 \end{align*}
For $a\in P$, we will also write $\ds{\lhd}{a}$ and $\us{\lhd}{a}$ instead of $\ds{\lhd}{\set{a}}$ and $\us{\lhd}{\set{a}}$, respectively.
\end{notation}

A poset~$(P,\leq)$ is \emph{lower finite} if $\ds{\leq}{a}$ is finite whenever $a\in P$.

For any set $X$, $\Pow{X}$ denotes the powerset of~$X$. 
``Countable'' will always mean ``at most countable''.
We will denote by~$\fin{X}$ the set of all finite subsets of a set~$X$.


\section{Finite separability, strong amalgams, and shadows}
\label{S:FSStrAm}

The following definition is stated in Freese and Nation~\cite{FreNat2015}.
However, the condition given in Definition~\ref{D:FinSep} already appeared in Freese and Nation~\cite{FreNat1978}, in connection with projective
lattices.
It was then used in several other works on projectivity.
In Heindorf and Shapiro~\cite{HeiSha1994} the condition was given the name \emph{Freese/Nation property}.
In Fuchino, Koppelberg, and Shelah~\cite{FuKoSh}, it is studied from a set-theoretical point of view.

\begin{definition}\label{D:FinSep}
A poset~$M$ is \emph{finitely separable} if there are functions~$A$ and~$B$ with domain~$M$ such that each~$A(z)$ is a finite set of upper bounds of~$z$, each~$B(z)$ is a finite set of lower bounds of~$z$, and for all $x,y\in M$, $x\leq y$ implies $A(x)\cap B(y)\neq\es$.
Such a pair $(A,B)$ will be called a \emph{separability witness} for~$M$.
\end{definition}

The following deep result is contained in Freese and Nation \cite[Theorem~1]{FreNat2015}.

\begin{theorem}\label{T:FreNat2015}
A \emph{lattice}~$L$ is finitely separable if{f} every lattice homomorphism onto~$L$ has an isotone section.
\end{theorem}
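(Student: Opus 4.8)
The plan is to establish both implications. For the direction from finite separability to the existence of isotone sections, the one elementary tool is the following \emph{interpolation remark}: if $f\colon K\twoheadrightarrow L$ is a surjective lattice homomorphism, $p\leq q$ in~$K$, and $z\in L$ with $f(p)\leq z\leq f(q)$, then for any $k_0\in K$ with $f(k_0)=z$ the element $(k_0\vee p)\wedge q$ satisfies $p\leq(k_0\vee p)\wedge q\leq q$ and is sent to~$z$ by~$f$ (because $(k_0\vee p)\wedge q\geq p\wedge q=p$, and $f\pI{(k_0\vee p)\wedge q}=(z\vee f(p))\wedge f(q)=z\wedge f(q)=z$). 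Now fix a separability witness $(A,B)$ for~$L$, normalised so that $z\in A(z)\cap B(z)$ for all~$z$. Call $S\subseteq L$ \emph{closed} when $A(z)\cup B(z)\subseteq S$ for every $z\in S$; since $A$ and~$B$ have finite values, the closure of a countable set is countable, so $L$ is a continuous increasing union $L=\bigcup_{\xi\leq\mu}L_\xi$ of closed subsets with $L_0=\es$ and every block $L_{\xi+1}\setminus L_\xi$ countable. Given a surjective lattice homomorphism $f\colon K\twoheadrightarrow L$, I construct an isotone section~$g$ by recursion on~$\xi$, maintaining the invariant that $g\res L_\xi$ is isotone and satisfies $f\circ(g\res L_\xi)=\id$; limit stages are dealt with by unions (legitimate by continuity). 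At a successor step, enumerate $L_{\xi+1}\setminus L_\xi=\setm{y_n}{n<N}$, $N\leq\omega$, and define $g(y_n)$ by an inner recursion on~$n$ subject to the constraints $g(v)\leq g(y_n)$ for $v\in\set{y_j\mid j<n,\ y_j\leq y_n}\cup\pI{B(y_n)\cap L_\xi}$ and $g(y_n)\leq g(w)$ for $w\in\set{y_j\mid j<n,\ y_j\geq y_n}\cup\pI{A(y_n)\cap L_\xi}$, together with $f(g(y_n))=y_n$. Both constraint sets are finite. Let $p$ be the join of the corresponding~$g(v)$ and $q$ the meet of the corresponding~$g(w)$ (with the obvious conventions if a set is empty); then $p\leq q$ because every such pair satisfies $v\leq w$ in~$L$ via~$y_n$ with $g(v),g(w)$ already defined and the partial~$g$ isotone, and $f(p)\leq y_n\leq f(q)$ because the $g(v)$'s lie over elements $\leq y_n$ and the $g(w)$'s over elements $\geq y_n$; so the interpolation remark furnishes a legitimate value for~$g(y_n)$. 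That the enlarged~$g$ is again isotone is then checked by examining the new comparabilities only: those between $y_n$ and a previous~$y_j$ were imposed directly, while for a comparability $v\leq y_n$ (resp.\ $y_n\leq v$) with $v\in L_\xi$ one picks $c\in A(v)\cap B(y_n)$ (resp.\ $c\in A(y_n)\cap B(v)$) from the witness and uses closedness of~$L_\xi$ --- which forces $c\in L_\xi$ --- to split the comparison as $g(v)\leq g(c)\leq g(y_n)$ (resp.\ $g(y_n)\leq g(c)\leq g(v)$), each half holding by the invariant or by a constraint just imposed.

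For the reverse direction, assume every lattice homomorphism onto~$L$ has an isotone section, and apply this to the canonical surjection $f\colon F\twoheadrightarrow L$ where $F$ is the free lattice on the underlying set of~$L$; let $g\colon L\to F$ be an isotone section, so that $g$ is in fact an order-embedding admitting the order-preserving retraction~$f$. It is classical that free lattices are finitely separable (cf.\ \cite{FreNat1978}); fixing a separability witness $(A_0,B_0)$ for~$F$, I transport it along~$g$ and~$f$ by setting $A(z)\eqdef f\rI{A_0(g(z))}$ and $B(z)\eqdef f\rI{B_0(g(z))}$. Then each $A(z)$ is a finite set of upper bounds of~$z$ (if $u\geq g(z)$ in~$F$ then $f(u)\geq f(g(z))=z$) and each $B(z)$ a finite set of lower bounds, and whenever $x\leq y$ in~$L$ one has $g(x)\leq g(y)$ in~$F$, hence $A_0(g(x))\cap B_0(g(y))\neq\es$, and applying~$f$ yields $A(x)\cap B(y)\neq\es$. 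Thus $(A,B)$ witnesses the finite separability of~$L$.

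The step I expect to be the main obstacle is the recursion in the first direction: one must simultaneously keep every stage finite, secure the local consistency $p\leq q$ required by the interpolation remark, and still force the global isotonicity of~$g$, and the device that makes all three compatible is precisely that the witness element~$c$ produced for an element~$v$ lying in an earlier block again lies in that block. A subsidiary point worth noting is that the reverse direction rests on free lattices being finitely separable, which is itself nontrivial (it depends on the solution of the word problem for free lattices), so a self-contained account would have to include a proof of that fact.
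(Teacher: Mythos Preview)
The paper does not give its own proof of this theorem; it is stated with a citation to Freese and Nation~\cite[Theorem~1]{FreNat2015}. Your proposal is a correct reconstruction of essentially that argument: the decomposition of~$L$ into a continuous chain of countable closed blocks is exactly the construction the paper later sketches in the proof of Proposition~\ref{P:EquivFSWRC} (attributed there to~\cite{FreNat2015}), and your converse via an isotone section of the free-lattice projection, together with transporting a separability witness along a retraction, is precisely the mechanism behind Proposition~\ref{P:BasicFS}\eqref{RetrFS}. So there is no divergence of method to report; your write-up simply fills in what the paper leaves to the reference.
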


It follows from Freese and Nation \cite[Theorem~6]{FreNat2015} that \emph{every projective member} (thus, in particular, any free member) \emph{of any variety of lattices is finitely separable}.
An example of non-finitely separable lattice is the chain~$\go_1$ of all countable ordinals.
The following result collects a few elementary observations on finite separability.

\begin{proposition}\label{P:BasicFS}\hfill
\begin{enumerater}
\item\label{dualFS}
A poset is finitely separable if{f} its dual poset is finitely separable.

\item\label{ctbleFS}
Every countable poset is finitely separable.

\item\label{RetrFS}
Every order-retract of a finitely separable poset is finitely separable.

\item\label{ConvFS}
Every order-convex subset of a finitely separable poset is finitely separable.

\item\label{bdsFS}
Let $M\sqcup\set{1}$ be the poset obtained by adding a new upper bound~$1$ atop all elements of a poset~$M$.
Then~$M$ is finitely separable if{f} $M\sqcup\set{1}$ is finitely separable.
A similar result holds for the poset $M\sqcup\set{0}$ obtained by adding a new lower bound~$0$.

\item\label{ProdFS}
Any finite product of finitely separable posets is finitely separable.
\end{enumerater}
\end{proposition}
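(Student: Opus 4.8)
The plan is to handle the six items in an order that lets later ones build on earlier ones, and in each case to \emph{transport a separability witness} along whatever structure links the posets involved. Throughout I shall write $(A,B)$ for a separability witness, keeping in mind that $A(z)$ (resp., $B(z)$) is a finite set of upper (resp., lower) bounds of~$z$ and that the only nontrivial requirement is that $x\leq y$ implies $A(x)\cap B(y)\neq\es$.

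I would open with~\ref{dualFS}: if $(A,B)$ is a separability witness for~$M$, then $(B,A)$ is one for the dual poset~$M^{\op}$, since the upper bounds of~$z$ in~$M$ are precisely the lower bounds of~$z$ in~$M^{\op}$, and the hypothesis ``$x\leq y$ in~$M^{\op}$'' unfolds to ``$y\leq x$ in~$M$''. This also does half the work of~\ref{bdsFS}, letting the ``new bottom'' case be deduced from the ``new top'' case by dualizing. For~\ref{ctbleFS} I would fix an enumeration $M=\setm{m_n}{n<\go}$, write $\nu(z)$ for the index of~$z$, and set $A(z)\eqdef\setm{u\in M}{\nu(u)\leq\nu(z)\text{ and }z\leq u}$ together with $B(z)\eqdef\setm{u\in M}{\nu(u)\leq\nu(z)\text{ and }u\leq z}$; these are finite because only $\nu(z)+1$ elements have index at most~$\nu(z)$, and both contain~$z$. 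If $x\leq y$ in~$M$, then whichever of~$x$ and~$y$ carries the smaller index lies in $A(x)\cap B(y)$, which settles the countable case.

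The structural items~\ref{RetrFS}, \ref{ConvFS}, and~\ref{ProdFS} all run in the same spirit. For~\ref{RetrFS}, given isotone maps $e\colon M\to P$ and $r\colon P\to M$ with $r\circ e=\id_M$ and a witness $(A,B)$ for the finitely separable poset~$P$, I would put $A'(z)\eqdef r[A(e(z))]$ and $B'(z)\eqdef r[B(e(z))]$: isotonicity of~$r$ sends upper (lower) bounds of~$e(z)$ to upper (lower) bounds of $r(e(z))=z$, and any member of $A(e(x))\cap B(e(y))$ is sent by~$r$ into $A'(x)\cap B'(y)$ as soon as $x\leq y$. For~\ref{ConvFS}, with $M$ an order-convex subset of a finitely separable poset~$P$ and $(A,B)$ a separability witness for~$P$, I would merely restrict: $A'(z)\eqdef A(z)\cap M$ and $B'(z)\eqdef B(z)\cap M$; the one point to check is that, given $x\leq y$ in~$M$ and $u\in A(x)\cap B(y)$, the relations $x\leq u\leq y$ force $u\in M$ by order-convexity, so $u\in A'(x)\cap B'(y)$. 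Item~\ref{bdsFS} then costs almost nothing: its backward implication is~\ref{ConvFS} applied to~$M$ viewed inside $M\sqcup\set{1}$ (in which it is order-convex, since nothing squeezed below an element of~$M$ can equal the top~$1$), while for the forward implication one extends a witness $(A,B)$ for~$M$ by $A'(z)\eqdef A(z)\cup\set{1}$ and $B'(z)\eqdef B(z)$ for $z\in M$, with $A'(1)=B'(1)\eqdef\set{1}$; the ``new bottom'' case follows from this via~\ref{dualFS}. Finally, for~\ref{ProdFS} an easy induction reduces matters to two factors, where from witnesses $(A_i,B_i)$ for~$M_i$ ($i=1,2$) I would take $A(z_1,z_2)\eqdef A_1(z_1)\times A_2(z_2)$ and $B(z_1,z_2)\eqdef B_1(z_1)\times B_2(z_2)$, the coordinatewise order making every clause transparent.

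I do not expect a genuine obstacle here: this is a toolkit of elementary closure properties rather than a statement with one hard idea. The spot that repays a moment's attention is the backward direction of~\ref{bdsFS}, equivalently the ``restrict the witness'' step behind~\ref{ConvFS}: one must notice that $M$ really is order-convex in $M\sqcup\set{1}$ and that intersecting the witness sets with~$M$ cannot break the witnessing condition, precisely because the element produced by the witness is automatically trapped inside~$M$. Everything else is either a direct enumeration argument or a routine transport of finitely many bounds along an isotone map, a subset inclusion, or a product.
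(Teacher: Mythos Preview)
Your proposal is correct and follows essentially the same approach as the paper: in each case you transport a separability witness just as the paper does (intersecting with~$M$ for~\ref{ConvFS}, adjoining~$\set{1}$ to the~$A$-values for~\ref{bdsFS}, taking coordinatewise products for~\ref{ProdFS}). The only difference is that the paper delegates~\ref{ctbleFS} and~\ref{RetrFS} to the Freese--Nation references, whereas you spell out the enumeration argument and the retraction argument explicitly; your versions are the standard ones and match what those references contain.
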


\begin{proof}
\eqref{dualFS} is trivial.
Moreover, as observed on \cite[page~246]{FreNat2015}, \eqref{ctbleFS} is easy.
The argument for~\eqref{RetrFS} is established in the course of the proof of the direction (3)$\Rightarrow$(1) of \cite[Theorem~1]{FreNat2015}.
For any separability witness~$(A,B)$ for a poset~$N$ and any order-convex subset~$M$ of~$N$, $x\mapsto A(x)\cap M$ and $x\mapsto B(x)\cap M$ form a separability witness for~$M$; \eqref{ConvFS} follows.
If $M\sqcup\set{1}$ is finitely separable, then, since~$M$ is an order-convex subset of~$M\sqcup\set{1}$, so is~$M$.
Conversely, for any separability witness $(A,B)$ for~$M$, $x\mapsto A(x)\cup\set{1}$ and $x\mapsto B(x)$ form a separability witness for $M\sqcup\set{1}$; \eqref{bdsFS} follows (\emph{see also \cite[Theorem~10]{FreNat2015} for a more general fact}).
Finally, if~$(A_i,B_i)$ is a separability witness for a poset~$M_i$ whenever $i\in\set{1,2}$, then the maps $(x_1,x_2)\mapsto A_1(x_1)\times A_2(x_2)$ and $(x_1,x_2)\mapsto B_1(x_1)\times B_2(x_2)$ form a separability witness for $M_1\times M_2$; \eqref{ProdFS} follows.
\end{proof}

As we will see in Example~\ref{Ex:SubFS}, the ``order-convex subset'' assumption cannot be replaced by ``subset'' in the statement of Proposition~\ref{P:BasicFS}\eqref{ConvFS}:
that is, a sublattice of a finitely separable lattice need not be finitely separable (cf. Example~\ref{Ex:SubFS}).

\begin{definition}\label{D:WRC}
Let~$M$ be a poset, let $A\subseteq M$, and let $x\in M$.
A subset~$U$ of~$A$ is a \emph{lower shadow} of~$x$ on~$A$ if $A\cap\ds{\leq}{x}=A\cap\ds{\leq}{U}$; \emph{upper shadows} are defined dually.
\end{definition}

\begin{definition}\label{D:Shad}
We say that a subset~$A$ is \emph{\fsh\ in a poset~$M$} if every element of~$M$ has both a finite lower shadow and a finite upper shadow on~$A$.
\end{definition}

Of course, if~$A$ is \fsh\ in~$M$, then for every $x\in M$, the smallest lower shadow (resp., upper shadow) of~$x$, with respect to set inclusion, on~$A$ is $\Max(A\cap\ds{\leq}{x})$ (resp., $\Min(A\cap\us{\leq}{x})$).

\begin{definition}\label{D:StrAmalg}
Let~$P$ be a poset.
We say that a poset~$M$ is the \emph{strong amalgam} of a family $\vecm{M_p}{p\in P}$ of subsets of~$M$ if the following statements hold:
\begin{enumerater}
\item\label{MuuMp}
$M=\bigcup_{p\in P}M_p$;

\item\label{MpIsot}
for all $p\leq q$ in~$P$, $M_p$ is a \fsh\ subset of~$M_q$;

\item\label{IP}(Interpolation Property)
for all $p,q\in P$, all $x\in M_p$, and $y\in M_q$, if $x\leq y$, then there are $r\leq p,q$ in~$P$ and $z\in M_r$ such that $x\leq z\leq y$.

\end{enumerater}
We say that the strong amalgam above is \emph{lower finite} if the poset~$P$ is lower finite.
\end{definition}

Items~\eqref{MpIsot} and~\eqref{IP} together obviously entail the following:
 \begin{equation*}
 M_p\cap M_q=\bigcup\setm{M_r}{r\leq p,q}\,,\quad
 \text{whenever }p,q\in P\,.
 \end{equation*}

\begin{proposition}\label{P:StrAmFS}
The following statements hold, for any lower finite strong amalgam~$M$ of a family $\vecm{M_p}{p\in P}$ of subsets:
\begin{enumerater}
\item\label{Shad}
Each~$M_p$ is \fsh\ in~$M$.

\item\label{LFFS}
If each~$M_p$ is finitely separable, then so is~$M$.

\end{enumerater}
\end{proposition}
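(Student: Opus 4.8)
The plan is to handle \eqref{Shad} first, since \eqref{LFFS} will build on it. For \eqref{Shad}, fix $p \in P$ and an arbitrary $x \in M$. By \eqref{MuuMp} there is some $q$ with $x \in M_q$; I would then exhibit a finite lower shadow of $x$ on $M_p$ by descending along $P$. The key observation is that, by lower finiteness of $P$, the set $\ds{\leq}{p} \cap \ds{\leq}{q}$ is finite, and by the displayed consequence of \eqref{MpIsot} and \eqref{IP}, $M_p \cap M_q = \bigcup\setm{M_r}{r \leq p, q}$. So I would first produce, via \eqref{MpIsot}, a finite lower shadow $S$ of $x$ on $M_q \cap M_p$... but more carefully: the cleanest route is induction on $\lvert \ds{\leq}{q}\rvert$ (or on $\lvert\ds{\leq}{p}\cap\ds{\leq}{q}\rvert$). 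If $q \leq p$ then $M_q$ is \fsh\ in $M_p$ by \eqref{MpIsot}, and a finite lower shadow of $x$ on $M_q$ — hence, taking downward closures inside $M_p$, a finite lower shadow of $x$ on $M_p$ — is immediate. In general, apply \eqref{MpIsot} to get a finite lower shadow $T$ of $x$ on... here I need a bound below both $p$ and $q$; instead I take $T = \Max(M_q \cap \ds{\leq}{x})$, which is finite since... no, $M_q$ need not be \fsh\ in $M$. The right move: for each element $z \in M_p \cap \ds{\leq}{x}$, apply the Interpolation Property \eqref{IP} to $z \leq x$ with $z \in M_p$, $x \in M_q$, obtaining $r \leq p, q$ and $w \in M_r$ with $z \leq w \leq x$; thus $M_p \cap \ds{\leq}{x} \subseteq M_p \cap \ds{\leq}{(\bigcup_{r \leq p,q} M_r \cap \ds{\leq}{x})}$. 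Now iterate downward, using lower finiteness of $P$ as the well-founded induction measure, and at each stage replace a set by a finite lower shadow afforded by \eqref{MpIsot}; the union of finitely many finite sets, over the finite set $\ds{\leq}{p} \cap \ds{\leq}{q}$, is finite. This yields a finite lower shadow of $x$ on $M_p$; the upper shadow is dual (using \eqref{IP} and the dual of \eqref{MpIsot}, noting \fsh\ is a self-dual notion).

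For \eqref{LFFS}, assume each $M_p$ is finitely separable with separability witness $(A_p, B_p)$. I would define a global pair $(A, B)$ on $M$ as follows: given $x \in M$, pick (by \eqref{MuuMp}) some $p(x) \in P$ with $x \in M_{p(x)}$, and set $A(x) = A_{p(x)}(x)$, $B(x) = B_{p(x)}(x)$; these are finite sets of upper (resp.\ lower) bounds of $x$. The problem is that this is too naive: if $x \leq y$ with $x \in M_p$ and $y \in M_q$, there is no reason for $A_p(x) \cap B_q(y) \neq \es$, since $A_p(x)$ lives in $M_p$ and $B_q(y)$ in $M_q$. This is the main obstacle, and it is resolved by \eqref{Shad} together with \eqref{IP}: by \eqref{IP} there is $r \leq p, q$ and $z \in M_r$ with $x \leq z \leq y$; by \eqref{Shad} (applied with the dual as well), $M_r$ is \fsh\ in $M$, so $x$ has a finite upper shadow $U_x \subseteq M_r$ and $y$ has a finite lower shadow $L_y \subseteq M_r$, and $z$ witnesses $U_x \cap \ds{\leq}{y} \cap (\text{something}) \ne \es$... concretely, some $u \in U_x$ satisfies $x \leq u \leq z \leq y$, so $u \in M_r$ with $x \leq u \leq y$, and similarly some $v \in L_y$ with $x \leq v \leq y$. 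Thus the correct definition of $A(x)$ must fold in, over all $p$ with $x \in M_p$ and all $r \leq p$ with... this threatens to be infinite. The fix: enlarge $A(x)$ to include, for the single chosen $p(x)$ and for each $r$ in the finite set $\ds{\leq}{p(x)}$, a finite upper shadow of $x$ on $M_r$ (finite by \eqref{MpIsot} since $x \in M_{p(x)}$ and $r \le p(x)$), then apply $A_r$ to each element of that shadow and take the union — still a finite set of upper bounds of $x$. Define $B(x)$ dually. Then given $x \leq y$, choose $r \leq p(x), p(y)$ and $z \in M_r$ from \eqref{IP}; pull $x$ up into $M_r$ via its finite upper shadow (getting $u \in M_r$, $x \le u \le z \le y$) and $y$ down into $M_r$ via its finite lower shadow (getting $v \in M_r$, $x \le u \le \cdots$, $v \le y$); since $u, v \in M_r$ with $u \leq y$ and $x \leq v$, and after possibly one more interpolation step to get $u \le v$ both in some common $M_{r'}$ below $r$... actually $u \le z$ and $x \le v \le y$ with $z, v \in M_r$ need not be comparable, so I interpolate once more on $u \le y$ inside $M_r$: no, $u \in M_r$ already and $u \le y$; I want $u \le v$. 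Here I invoke finite separability of $M_r$ itself on the pair — hmm. The clean statement: it suffices that there exist $w_1 \in M_r$ with $x \le w_1 \le y$ coming from $A$-data and $w_2 \in M_r$ with $x \le w_2 \le y$ from $B$-data; then I need $w_1 \le w_2$ or to apply $(A_r, B_r)$. The honest resolution is: choose a single $w \in M_r$ with $x \le w \le y$ (from \eqref{IP}, $w = z$), take $u \in$ (upper shadow of $x$ on $M_r$) with $u \le w$, take $v \in$ (lower shadow of $w$ — equivalently of $y$, since $w \le y$ — on $M_r$)... but lower shadow of $y$ on $M_r$ captures $M_r \cap \ds{\le}{y}$, and $u \in M_r \cap \ds{\le}{y}$, so $u \le v'$ for some $v'$ in that lower shadow, with $v' \le y$; now $u, v' \in M_r$, $u \le v'$. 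Finally apply the separability witness $(A_r, B_r)$ for $M_r$ to $u \le v'$: $A_r(u) \cap B_r(v') \ne \es$. Since $u$ lies in the upper shadow of $x$ on $M_r$, $A_r(u) \subseteq A(x)$ by construction; since $v'$ lies in the lower shadow of $y$ on $M_r$, $B_r(v') \subseteq B(y)$. Hence $A(x) \cap B(y) \ne \es$, as required. I would present this last chain of interpolations as the technical heart of the argument, being careful that every shadow invoked is finite (guaranteed by \eqref{MpIsot} and \eqref{Shad}) and that every index $r$ ranges over a finite subset of $P$ (guaranteed by lower finiteness).

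The main obstacle, then, is not conceptual but bookkeeping: ensuring the global sets $A(x), B(x)$ stay \emph{finite} while being rich enough that any witness $z$ produced by the Interpolation Property can be ``seen'' by both $A(x)$ and $B(y)$ after being pushed into a common block $M_r$. Lower finiteness of $P$ is exactly what makes the necessary unions finite; \eqref{Shad} is what lets us transfer from $M$-comparabilities down into a single block where the per-block witness $(A_r, B_r)$ applies.
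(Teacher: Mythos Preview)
Your proposal is correct and, once the false starts are stripped away, follows essentially the same route as the paper. For \eqref{Shad} the paper does exactly what you converge on: with $x\in M_q$ and target $M_p$, take for each $r$ in the finite set $\ds{\leq}{p}\cap\ds{\leq}{q}$ a finite shadow of~$x$ on~$M_r$ (available since $M_r$ is \fsh\ in~$M_q$), and verify via the Interpolation Property that the union is a shadow on~$M_p$; no induction is needed. For \eqref{LFFS} the paper defines $A(x)$ and $B(x)$ precisely as you settle on---$A(x)=\bigcup\setm{A_r(u)}{r\leq p(x),\ u\in U_{x,r}}$ with $U_{x,r}$ a finite upper shadow of~$x$ on~$M_r$, and dually for~$B$---and the verification is your final chain: from $x\leq y$ get $r\leq p(x),p(y)$ and $z\in M_r$ by \eqref{IP}, then $u\in U_{x,r}$ with $x\leq u\leq z$ and $v\in V_{y,r}$ with $z\leq v\leq y$ (so $u\leq v$ directly, without the extra interpolation you worried about), and apply $(A_r,B_r)$ to $u\leq v$.
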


\begin{proof}
\emph{Ad}~\eqref{Shad}
We must prove that every $x\in M$ has (say) an upper shadow on each~$M_q$.
Pick $p\in P$ such that $x\in M_p$.
Since~$P$ is lower finite, $R\eqdef\ds{\leq}{p}\cap\ds{\leq}q$ is finite.
For each $r\in R$, it follows from Definition~\ref{D:StrAmalg}\eqref{MpIsot} that~$M_r$ is \fsh\ in~$M_p$; thus~$x$ has a finite upper shadow~$U_r$ on~$M_r$.
A direct application of the Interpolation Property (cf. Definition~\ref{D:StrAmalg}\eqref{IP}) then shows that $\bigcup_{r\in R}U_r$ is a finite upper shadow of~$x$ on~$M_q$.

\emph{Ad}~\eqref{LFFS}.
For each $x\in M$, pick $\nu(x)\in P$ such that $x\in M_{\nu(x)}$.
For every $p\in P$, pick a separability witness $(A_p,B_p)$ for~$M_p$.
For all $x\in M$ and all $p\leq\nu(x)$, it follows from Definition~\ref{D:StrAmalg}\eqref{MpIsot} that~$x$ has a finite upper shadow~$U_{x,p}$ and a finite lower shadow~$V_{x,p}$ on~$M_p$.
The sets
 \begin{align*}
 A(x)&\eqdef
 \bigcup\setm{A_p(u)}{p\leq\nu(x)\,,\ u\in U_{x,p}}\,,\\
 B(x)&\eqdef
 \bigcup\setm{B_p(v)}{p\leq\nu(x)\,,\ v\in V_{x,p}}
 \end{align*}
are, respectively, a finite set of upper bounds of~$x$ and a finite set of lower bounds of~$x$ in~$M$.
Let $x\leq y$ in~$M$; set $p\eqdef\nu(x)$ and $q\eqdef\nu(y)$.
By the Interpolation Property (cf. Definition~\ref{D:StrAmalg}\eqref{IP}), there are $r\in\ds{\leq}{p}\cap\ds{\leq}{q}$ and $z\in M_r$ such that $x\leq z\leq y$.
By definition, there are $u\in U_{x,r}$ and $v\in V_{y,r}$ such that $x\leq u\leq z\leq v\leq y$.
Since $u\leq v$ within~$M_r$, there exists $w\in A_r(u)\cap B_r(v)$; so $x\leq w\leq y$ whereas $w\in A(x)\cap B(y)$.
Therefore, $(A,B)$ is a separability witness for~$M$.
\end{proof}

\begin{examplepf}\label{Ex:SubFS}
A finitely separable lattice with a non-finitely separable sublattice.
\end{examplepf}

\begin{proof}
The lattice $P\eqdef\fin{\go_1}$ is the strong amalgam of its finite sublattices $\fin{X}$, for finite $X\subset\go_1$; thus, by Proposition~\ref{P:StrAmFS}, it is finitely separable, and thus so is its opposite lattice~$P^{\op}$.
It follows from Freese and Nation \cite[Lemma~9]{FreNat2015} that the ordinal sum $P\dotplus P^{\op}$ is not finitely separable.

Denote by~$u$ a new top element for~$P$ (thus also a new bottom element for~$P^{\op}$).
It follows from Proposition~\ref{P:BasicFS}\eqref{bdsFS} that~$P\cup\set{u}$ and~$P^{\op}\cup\set{u}$ are both finitely separable, thus so is their product $(P\cup\set{u})\times(P^{\op}\cup\set{u})$.
Moreover, $P\dotplus P^{\op}$ embeds into $(P\cup\set{u})\times(P^{\op}\cup\set{u})$, by mapping each $x\in P$ to $(x,u)$ and each $y\in P^{\op}$
 to $(u,y)$.
\end{proof}

\section{\Fsh\ well-orderings}
\label{S:WRCWO}

A typical situation that will arise in this section will involve two distinct orderings~$\leq$ and~$\ci$ on the same universe~$M$, occasionally prompting the need to spell out which one is in question.
For example, Definition~\ref{D:WRCWO} will begin with ``Let $(M,\leq)$ be a poset'' instead of ``Let~$M$ be a poset''.

\begin{definition}\label{D:WRCWO}
Let $(M,\leq)$ be a poset.
A binary relation~$\ci$ of~$M$ is \emph{\fsh} on~$(M,\leq)$ if~$\ds{\ci}{x}$ is \fsh\ in~$(M,\leq)$ whenever $x\in M$.
\end{definition}

If~$\ci$ is a partial ordering, with associated strict ordering~$\sci$, then~$\ds{\ci}{x}$ is \fsh\ if{f}~$\ds{\sci}{x}$ is \fsh\ (for these two sets differ by the singleton~$\set{x}$).
In particular, $\ci$ is \fsh\ if{f}~$\sci$ is \fsh.

\begin{proposition}\label{P:WRCWOchar}
Let~$(M,\leq)$ be a poset.
Then a well-ordering~$\ci$ on~$M$ is \fsh\ in $(M,\leq)$ if{f} every $a\in M$ has both a finite upper shadow and a finite lower shadow on~$\ds{\sci}{a}$.
\end{proposition}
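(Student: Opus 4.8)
The forward implication is immediate: if $\ci$ is \fsh, then $\ds{\ci}{a}$ is \fsh\ in $(M,\leq)$ for every $a\in M$, so $a$ has a finite lower shadow and a finite upper shadow on $\ds{\ci}{a}$; since $\ds{\ci}{a}$ and $\ds{\sci}{a}$ differ only in the point $a$, the same (adjusted) shadows work on $\ds{\sci}{a}$. So the plan is to concentrate on the converse, arguing by a minimal counterexample along $\ci$. Throughout I would use the reformulation that, since any finite lower shadow $W$ of $b$ on a set $A$ automatically satisfies $W\subseteq A\cap\ds{\leq}{b}$, the existence of a finite lower shadow of $b$ on $A$ is equivalent to $A\cap\ds{\leq}{b}$ having a finite cofinal subset (and dually for upper shadows).

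Suppose $\ci$ is not \fsh, and let $x$ be the $\ci$-least element for which $\ds{\ci}{x}$ --- equivalently, by the remark preceding the statement, $\ds{\sci}{x}$ --- fails to be \fsh\ in $(M,\leq)$. Then $x$ is not $\ci$-minimal (there $\ds{\sci}{x}=\es$), and $x$ is not an immediate $\ci$-successor of some $x^-$: otherwise $\ds{\sci}{x}=\ds{\sci}{x^-}\cup\set{x^-}$ would be \fsh\ by minimality, using the easy fact that adjoining one point to a \fsh\ subset yields a \fsh\ subset (the remark again). Hence $x$ is a $\ci$-limit. Since both the hypothesis and the property ``\fsh\ in $(M,\leq)$'' are self-dual under reversing $\leq$, I may assume the failure is on the lower side: some $b\in M$ has no finite lower shadow on $\ds{\sci}{x}$, i.e., $S\eqdef\ds{\sci}{x}\cap\ds{\leq}{b}$ has no finite cofinal subset. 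A short argument then shows $S$ is $\ci$-cofinal below $x$: were there a $\ci$-upper bound $s\sci x$ of $S$, taking the least such $s$ makes $S$ equal to $\ds{\sci}{s}\cap\ds{\leq}{b}$ or to $(\ds{\sci}{s}\cap\ds{\leq}{b})\cup\set{s}$, each of which has a finite cofinal subset by minimality of $x$.

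The heart of the argument is to contradict this using the shadow data guaranteed by the hypothesis --- in particular the finite sets $L_x\eqdef\Max(\ds{\sci}{x}\cap\ds{\leq}{x})$ and $R_x\eqdef\Min(\ds{\sci}{x}\cap\us{\leq}{x})$, together with analogous data at other elements --- plus well-foundedness of $\ci$, splitting on the position of $b$ relative to $x$. If $b=x$, then $S=\ds{\sci}{x}\cap\ds{\leq}{x}$ has a finite cofinal subset, a contradiction. If $b\sci x$: for any $y\in S$ with $b\sci y$ we have $b\in\ds{\sci}{y}\cap\us{\leq}{y}$, so the finite coinitial subset $\Min(\ds{\sci}{y}\cap\us{\leq}{y})$ contains some $r\leq b$ with $y\leq r$ and $r\sci y$; then $r\in S$ with $r\sci y$, and iterating --- which terminates, $\ci$ being well-founded --- pushes $y$ below some element of $(\ds{\sci}{b}\cap\ds{\leq}{b})\cup\set{b}$; since by the hypothesis at $b$ the set $\ds{\sci}{b}\cap\ds{\leq}{b}$ has a finite cofinal subset $F$, every such $y$ (and also every $y\in S$ with $y\ci b$) lies below an element of the finite set $F\cup\set{b}\subseteq S$, contradicting that $S$ has no finite cofinal subset. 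If $x\sci b$: dually, let $\Phi(b)$ be the closure of $\set{b}$ under $t\mapsto\Max(\ds{\sci}{t}\cap\ds{\leq}{t})$; each step branches finitely and strictly lowers the $\ci$-rank, so by K\"onig's Lemma $\Phi(b)$ is finite, and a descent (stopping once the $\ci$-rank reaches that of $x$ or below, invoking $L_x$ when it lands exactly at $x$) shows every $y\in S$ lies below an element of the finite set $\Phi(b)\cap\ds{\sci}{x}\subseteq S$ --- once more contradicting that $S$ has no finite cofinal subset.

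I expect the last paragraph to be the real obstacle. An arbitrary target $b$ has, a priori, no order relation in $(M,\leq)$ to the limit element $x$, so the only shadow information one genuinely controls sits ``at $x$'' (and at the elements reachable by induction); bridging from $b$ to it is exactly what the $\ci$-walking arguments accomplish. Guaranteeing that these walks terminate (well-foundedness of $\ci$) and, in the case $x\sci b$, collecting their endpoints into a single finite set (K\"onig's Lemma) is the technical core; the reductions leading up to it, the self-duality, and the forward implication are routine.
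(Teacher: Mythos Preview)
Your argument is correct, but you have chosen the harder induction variable and thereby done much more work than necessary. The paper inducts on $b$ rather than on the target $a$ (your $x$): to show that every $b$ has, say, a finite upper shadow on every $\ds{\sci}{a}$, proceed by $\ci$-induction on $b$. If $b\sci a$ then $\{b\}$ itself is such a shadow. If $a\ci b$, let $A$ be a finite upper shadow of $b$ on $\ds{\sci}{b}$ (this is the hypothesis), note that each $x\in A$ satisfies $x\sci b$, apply the induction hypothesis to get a finite upper shadow $U_x$ of $x$ on $\ds{\sci}{a}$, and observe that $\bigcup_{x\in A}U_x$ is a finite upper shadow of $b$ on $\ds{\sci}{a}$. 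That is the entire proof.

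Against this, your route carries a good deal of dead weight. The reduction to limit $x$, the cofinality of $S$ below $x$, and the set $R_x$ are never invoked in your case analysis. Your case $b\sci x$ is in fact trivial: since $b\in S$ and $b$ is the $\leq$-maximum of $S=\ds{\sci}{x}\cap\ds{\leq}{b}$, the singleton $\{b\}$ is already cofinal in $S$, so the walking argument there is superfluous. The only case with content is $x\sci b$, and there your closure $\Phi(b)$ with K\"onig's Lemma is exactly the paper's induction on $b$ unwound into an explicit finitely-branching well-founded tree; the paper obtains finiteness of the relevant shadow by the inductive step itself rather than by appeal to K\"onig. Both approaches are valid, but inducting on $b$ collapses all three of your cases into two lines and removes the need for any auxiliary combinatorics.
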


\begin{proof}
We verify the nontrivial direction.
Suppose that the given condition holds and let $a,b\in M$; we must verify that~$b$ has (say) a finite upper shadow on~$\ds{\sci}{a}$.
We argue by $\ci$-induction on~$b$.
The result is trivial if $b\sci a$ (for then $b\in\ds{\sci}{a}$); we may thus suppose that $a\ci b$.
By assumption, $b$ has a finite upper shadow~$A$ on~$\ds{\sci}{b}$.
By induction hypothesis, every $x\in\ds{\sci}{b}$ has a finite upper shadow~$U_x$ on~$\ds{\sci}{a}$.
Then $\bigcup_{x\in A}U_x$ is a finite upper shadow of~$b$ on~$\ds{\sci}{a}$.
\end{proof}

\begin{proposition}\label{P:EquivFSWRC}
A poset~$(M,\leq)$ is finitely separable if{f} it has a \fsh\ well-ordering.
Furthermore, for every \fsh\ well-ordering~$\ci$ on~$M$, there exists a separability witness $(A,B)$ of~$M$ such that
 \begin{equation}\label{Eq:ABthrowlow}
 x\in A(y)\cup B(y)\text{ implies that }x\ci y\,,\quad
 \text{for all }x,y\in M\,.
 \end{equation}
\end{proposition}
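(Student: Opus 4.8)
The plan is to establish the ``furthermore'' clause first, which at once yields the backward implication since it produces a separability witness, and then to prove the forward implication.

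\emph{The ``furthermore'' clause (hence the backward implication).}
Let~$\ci$ be a \fsh\ well-ordering on $(M,\le)$, and fix, by Proposition~\ref{P:WRCWOchar}, for each $a\in M$ a finite lower shadow~$V_a$ and a finite upper shadow~$U_a$ of~$a$ on~$\ds{\sci}{a}$; one checks at once that~$V_a$ is then a finite set of lower bounds of~$a$, and~$U_a$ a finite set of upper bounds of~$a$, both included in~$\ds{\sci}{a}$.
Define $A,B\colon M\to\fin{M}$ by the $\ci$-recursion $A(x)\eqdef\set{x}\cup\bigcup_{u\in U_x}A(u)$ and $B(x)\eqdef\set{x}\cup\bigcup_{v\in V_x}B(v)$; this is legitimate as $U_x,V_x\subseteq\ds{\sci}{x}$, and a routine $\ci$-induction shows that~$A(x)$ is a finite set of upper bounds of~$x$, that~$B(x)$ is a finite set of lower bounds of~$x$, and that $A(x)\cup B(x)\subseteq\ds{\ci}{x}$ --- this last point being precisely~\eqref{Eq:ABthrowlow}.
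It remains to check that $x\le y$ implies $A(x)\cap B(y)\ne\es$, which I would prove by $\ci$-induction on $\max_\ci\set{x,y}$: the case $x=y$ holds since $y\in A(y)\cap B(y)$; if $x\sci y$ then $x\in\ds{\sci}{y}\cap\ds{\le}{y}=\ds{\sci}{y}\cap\ds{\le}{V_y}$, so $x\le v$ for some $v\in V_y$, and since $\max_\ci\set{x,v}\sci y=\max_\ci\set{x,y}$ and $x\le v$, the induction hypothesis yields $w\in A(x)\cap B(v)\subseteq A(x)\cap B(y)$; the case $y\sci x$ is symmetric, using~$U_x$ and~$A$.

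\emph{The forward implication.}
Assume~$M$ is finitely separable, with separability witness $(A,B)$.
Call $N\subseteq M$ \emph{closed} if $z\in N$ implies $A(z)\cup B(z)\subseteq N$; as~$A,B$ are finite-valued, the closure of a finite subset is countable.
A routine transfinite construction --- taking, at each successor stage, the closure of the previous term together with one element not yet covered --- then yields a continuous increasing chain $\seqm{N_\xi}{\xi\le\kappa}$ of closed subsets of~$M$, where~$\kappa$ is the cardinality of~$M$, with $N_0=\es$, $N_\kappa=M$, and each \emph{layer} $L_\xi\eqdef N_{\xi+1}\sd N_\xi$ countable.
For $z\in M$ let $\lambda(z)$ be the unique ordinal with $z\in L_{\lambda(z)}$, fix for each~$\xi$ a well-ordering~$\prec_\xi$ of~$L_\xi$ of order type at most~$\go$, and let~$\ci$ be the lexicographic well-ordering of~$M$ given by $x\sci y$ if{f} $\lambda(x)<\lambda(y)$, or $\lambda(x)=\lambda(y)$ and $x\prec_{\lambda(x)}y$.
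One verifies directly that~$\ci$ is a well-ordering; by Proposition~\ref{P:WRCWOchar} it then suffices to produce, for each $a\in M$ with $\xi\eqdef\lambda(a)$, a finite lower shadow and a finite upper shadow of~$a$ on~$\ds{\sci}{a}$.

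\emph{Finite shadowing, and the main obstacle.}
Two facts drive this.
First, $\ds{\sci}{a}\cap L_\xi=\setm{z\in L_\xi}{z\prec_\xi a}$ is a proper initial segment of a well-order of order type at most~$\go$, hence \emph{finite}.
Second, $N_\xi\subseteq\ds{\sci}{a}$, whereas $z\in N_\xi$ forces $A(z)\cup B(z)\subseteq N_\xi$ by closedness.
Hence, for $z\in\ds{\sci}{a}$ with $z\le a$: if $z\in L_\xi$ there is nothing to do, while if $z\in N_\xi$ then the element $w\in A(z)\cap B(a)$ furnished by the separability property satisfies $z\le w$, $w\in B(a)$, and $w\in A(z)\subseteq N_\xi\subseteq\ds{\sci}{a}$.
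It follows that $W\eqdef\pII{B(a)\cap\ds{\sci}{a}}\cup\pII{\ds{\sci}{a}\cap L_\xi\cap\ds{\le}{a}}$ is a finite lower shadow of~$a$ on~$\ds{\sci}{a}$; dually, with~$A(a)$ in place of~$B(a)$ and using that the witness in $A(a)\cap B(z)$ lies in $B(z)\subseteq N_\xi$ whenever $a\le z\in N_\xi$, one gets a finite upper shadow.
The step I expect to require the most care is this last verification, whose crux --- which is also what makes the auxiliary orderings~$\prec_\xi$ irrelevant --- is that closedness of each~$N_\xi$ confines every separability witness for a pair of ``old'' elements to~$N_\xi$, so that inside the initial segment~$\ds{\sci}{a}$ the only possible source of infiniteness is the single countable layer through~$a$, which has only finite proper initial segments.
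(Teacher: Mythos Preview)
Your argument is correct and follows essentially the same approach as the paper: for the ``furthermore'' clause you define $A(x)=\set{x}\cup\bigcup_{u\in U_x}A(u)$ and $B(x)=\set{x}\cup\bigcup_{v\in V_x}B(v)$ by $\ci$-recursion and verify separability by $\ci$-induction on $\max_\ci\set{x,y}$, exactly as in the paper; for the forward direction you build a continuous chain of $(A\cup B)$-closed subsets with countable layers, well-order each layer in type at most~$\go$, and combine lexicographically, which is precisely the paper's construction phrased in the language of closed sets rather than of a direct partition $\vecm{M_\xi}{\xi<\gd}$. The shadow verification is also the same: the paper observes that $B(c)\cap M_{<\gc}$ is a lower shadow of~$c$ on~$M_{<\gc}$ (using closedness of~$M_{<\gc}$, which is your~$N_\xi$) and then adjoins the finite remainder coming from the current layer.
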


\begin{proof}
The argument of the proof that every finitely separable poset has a \fsh\ well-ordering is mostly contained in the proof of Freese and Nation \cite[Theorem~1]{FreNat2015}.
For convenience, we provide a description of the well-ordering.
Let $(A,B)$ be a separability witness for~$M$.
We define inductively an ordinal~$\gd$ and a partition $\vecm{M_{\xi}}{\xi<\gd}$ of~$M$ into countable blocks, as follows.
Suppose $\vecm{M_{\xi}}{\xi<\ga}$ already defined and set $M_{<\ga}\eqdef\bigcup_{\xi<\ga}M_{\xi}$.
If $M_{<\ga}=M$ then set $\gd\eqdef\ga$ and stop.
Suppose that $M_{<\ga}\neq M$ and pick $c\in M\setminus M_{<\ga}$.
The smallest subset~$M_{\ga}$ of~$M$ such that $c\in M_{\ga}$ and $(A(x)\cup B(x))\setminus M_{<\ga}\subseteq M_{\ga}$ whenever $x\in M_{\ga}$ is countable, and disjoint from~$M_{<\ga}$.
This completes the induction step.

For any $x\in M$, denote by~$\nu(x)$ the unique~$\xi<\gd$ such that $x\in M_{\xi}$.
Pick a well-ordering~$\ci_{\xi}$ of~$M_{\xi}$ of type at most~$\go$, for each $\xi<\ga$, and let
 \[
 x\ci y\quad\text{if}\quad\pI{\nu(x)<\nu(y)\text{ or }
 (\nu(x)=\nu(y)\text{ and }x\ci_{\nu(x)}y)}\,,\qquad
 \text{for all }x,y\in M\,.
 \]
Then~$\ci$ is a well-ordering of~$M$ and for any $c\in M$, with $\gc\eqdef\nu(c)$, $A(c)\cap M_{<\gc}$ is a finite upper shadow and $B(c)\cap M_{<\gc}$ is a finite lower shadow of~$c$ on~$M_{<\gc}$.
Since $\ds{\sci}{c}=M_{<\gc}\cup F$ for the finite set $F\eqdef\setm{x\in M_{\gc}}{x\sci_{\gc}c}$, $c$ also has a finite upper shadow and a finite lower shadow on~$\ds{\sci}{c}$.
Hence, the well-ordering~$\ci$ is \fsh\ in~$(M,\leq)$.

Conversely, let~$\ci$ be a \fsh\ well-ordering on~$M$.
For $c\in M$, we shall define~$A(c)$ and~$B(c)$ by $\ci$-induction, in such a way that $A(c)\cup B(c)\subseteq\ds{\ci}{c}$; this will ensure~\eqref{Eq:ABthrowlow}.
By induction hypothesis, $c$ has a finite upper shadow~$U_c$ and a finite lower shadow~$V_c$ on~$\ds{\sci}{c}$.
Then $A(c)\eqdef\set{c}\cup\bigcup_{u\in U_c}A(u)$ is a finite set of upper bounds of~$c$, and for each $u\in U_c$, $A(u)\subseteq\ds{\ci}{u}$ by induction hypothesis, so $A(u)\subseteq\ds{\sci}{c}$; whence $A(c)\subseteq\ds{\ci}{c}$.
Symmetrically, $B(c)\eqdef\set{c}\cup\bigcup_{v\in V_c}B(v)$ is a finite set of lower bounds of~$c$ contained in~$\ds{\ci}{c}$.

We claim that $(A,B)$ is a separability witness for~$(M,\leq)$.
Let $a,b\in M$ such that $a\leq b$, we verify that $A(a)\cap B(b)\neq\es$.
We argue by $\ci$-induction with respect to $\max_{\ci}\set{a,b}$ (i.e., the maximum of~$\set{a,b}$ with respect to~$\ci$).
Since each $c\in A(c)\cap B(c)$, we may assume that $a<b$.
If $a\sci b$, then $a\leq v$ for some $v\in V_b$, so, since $\set{a,v}\subseteq\ds{\sci}{b}$ and by our induction hypothesis, $A(a)\cap B(v)\neq\es$, and so, since $B(v)\subseteq B(b)$, we get $A(a)\cap B(b)\neq\es$.
The argument for $b\sci a$ is symmetric.
This completes the proof of our claim.
\end{proof}

\begin{definition}\label{D:LocFinSetMap}
For a  map $C\colon M\to\Pow{M}$, we set $C^0(x)=\set{x}$ and $C^{n+1}(x)\eqdef\bigcup_{y\in C(x)}C^n(y)$ whenever $n<\go$.
We say that the map~$C$ is \emph{locally finite} if $C^{\go}(x)\eqdef\bigcup_{n<\go}C^n(x)$ is finite whenever $x\in M$.
\end{definition}

\begin{proposition}\label{P:FS2LocFin}
Every finitely separable poset~$M$ has a separability witness $(A,B)$ such that the set map $(A\cup B\colon x\mapsto A(x)\cup B(x))$ is locally finite.
\end{proposition}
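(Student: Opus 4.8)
The plan is to deduce the statement directly from Proposition~\ref{P:EquivFSWRC}. First I would invoke that proposition to fix a \fsh\ well-ordering~$\ci$ on~$M$ together with a separability witness $(A,B)$ satisfying~\eqref{Eq:ABthrowlow}, i.e.\ $A(y)\cup B(y)\subseteq\ds{\ci}{y}$ for every $y\in M$. Put $C(x)\eqdef A(x)\cup B(x)$. Since $\ci$ is a \emph{linear} ordering, \eqref{Eq:ABthrowlow} gives $C(x)\setminus\set{x}\subseteq\ds{\sci}{x}$, while $C(x)$ is finite by the definition of a separability witness. I claim that this same pair $(A,B)$ witnesses the proposition, that is, $C^{\go}(x)$ is finite for every $x\in M$; this is all that remains to prove.

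The key observation is the recursion
\[
C^{\go}(x)=\set{x}\cup\bigcup\setm{C^{\go}(y)}{y\in C(x)\setminus\set{x}}\,,\qquad\text{for all }x\in M\,.
\]
The inclusion from right to left is immediate, since $x\in C^0(x)$ and, for $y\in C(x)$, one has $C^n(y)\subseteq C^{n+1}(x)$ for every $n<\go$, hence $C^{\go}(y)\subseteq C^{\go}(x)$. For the reverse inclusion, take $z\in C^{\go}(x)$ and let $n$ be least with $z\in C^n(x)$: if $n=0$ then $z=x$; if $n\ge1$ then $z\in C^{n-1}(y)$ for some $y\in C(x)$, and this $y$ cannot be~$x$ (otherwise $z\in C^{n-1}(x)$, contradicting the minimality of~$n$), so $z\in C^{\go}(y)$ with $y\in C(x)\setminus\set{x}$.

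With the displayed identity in hand, finiteness of $C^{\go}(x)$ follows by $\ci$-induction on~$x$: the index set $C(x)\setminus\set{x}$ is a finite subset of~$\ds{\sci}{x}$, so by the induction hypothesis each $C^{\go}(y)$ occurring on the right-hand side is finite, and therefore $C^{\go}(x)$ is a finite union of finite sets. As $\ci$ is a well-ordering, this induction is legitimate, and we conclude that the set map $x\mapsto A(x)\cup B(x)$ is locally finite. I do not expect a serious obstacle here: the real content is already packaged in Proposition~\ref{P:EquivFSWRC}, whose property~\eqref{Eq:ABthrowlow} is exactly what forces $C(x)$ to lie $\ci$-below~$x$ (apart from~$x$ itself) and thereby makes the well-founded recursion on~$\ci$ go through; the only point that needs a little care is the verification of the recursion for~$C^{\go}$, in particular checking that the minimal-$n$ argument genuinely rules out the $y=x$ branch.
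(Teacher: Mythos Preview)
Your argument is correct and follows exactly the paper's approach: invoke Proposition~\ref{P:EquivFSWRC} to obtain a separability witness satisfying~\eqref{Eq:ABthrowlow}, then verify by $\ci$-induction that $(A\cup B)^{\go}(x)$ is finite. The paper simply declares this verification ``straightforward'' without writing out the recursion for~$C^{\go}$ or the minimal-$n$ argument, so your proposal is a faithful expansion of the paper's own proof.
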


\begin{proof}
By Proposition~\ref{P:EquivFSWRC}, $M$ has a \fsh\ well-ordering~$\ci$ and a separability witness $(A,B)$ satisfying~\eqref{Eq:ABthrowlow}.
If is straightforward to verify, by $\ci$-induction on~$x$, that $(A\cup B)^{\go}(x)$ is finite whenever $x\in M$.
\end{proof}

\begin{theorem}\label{T:CharFS}
A poset~$M$ is finitely separable if{f} it is the strong amalgam, over a lower finite poset \pup{resp., a sublattice of~$\fin{M}$}, of a family of nonempty finite subsets.
\end{theorem}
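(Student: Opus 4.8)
The plan is to prove the two implications of the biconditional separately. The backward implication will follow almost at once from Propositions~\ref{P:BasicFS} and~\ref{P:StrAmFS}; the forward implication requires an explicit construction of a suitable index poset, and that is where the work lies.

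For the \emph{backward} direction, suppose $M$ is the strong amalgam, over a lower finite poset~$P$, of a family $\vecm{M_p}{p\in P}$ of nonempty finite subsets of~$M$. The case in which~$P$ is a sublattice of~$\fin M$ is subsumed under this, because $\fin M$ is lower finite --- a finite set has only finitely many subsets --- and hence so is any of its sub-posets. Since each~$M_p$ is finite, it is finitely separable by Proposition~\ref{P:BasicFS}\eqref{ctbleFS}, and therefore Proposition~\ref{P:StrAmFS}\eqref{LFFS} gives that~$M$ is finitely separable.

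For the \emph{forward} direction, assume~$M$ finitely separable; we may assume $M\neq\es$. By Proposition~\ref{P:FS2LocFin} I would fix a separability witness~$(A,B)$ for~$M$ such that the set map $C\eqdef(x\mapsto A(x)\cup B(x))$ is locally finite. Call a set $S\subseteq M$ \emph{$C$-closed} if $C(x)\subseteq S$ for every $x\in S$; routine induction then shows that $C^{\go}(y)\subseteq S$ whenever $y\in S$ and $S$ is $C$-closed, and in particular each $C^{\go}(x)$ is itself a finite $C$-closed set containing~$x$. The finite $C$-closed subsets of~$M$ are closed under finite unions and under intersections, so they constitute a sublattice~$P$ of~$\fin M$ (it contains~$\es$, so it is nonempty). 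I would now fix $x_0\in M$, set $K\eqdef C^{\go}(x_0)$ --- a fixed nonempty finite $C$-closed set --- and put $M_p\eqdef p\cup K$ for each $p\in P$, so that every~$M_p$ is a nonempty finite subset of~$M$. The claim to verify is that~$M$ is the strong amalgam of $\vecm{M_p}{p\in P}$ over~$P$. Condition~\eqref{MuuMp} of Definition~\ref{D:StrAmalg} holds because $\bigcup_{p\in P}M_p\supseteq\bigcup_{x\in M}C^{\go}(x)=M$ while every $M_p\subseteq M$. Condition~\eqref{MpIsot} is immediate: $p\subseteq q$ yields $M_p\subseteq M_q$, and in the finite poset~$M_q$ every subset has finite shadows, so~$M_p$ is \fsh\ in~$M_q$. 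For the Interpolation Property~\eqref{IP}, let $x\in M_p$, $y\in M_q$, $x\leq y$. If $x\in K$, take $r\eqdef\es$ and $z\eqdef x$; if $y\in K$, take $r\eqdef\es$ and $z\eqdef y$ (note $M_{\es}=K$ and $\es\subseteq p,q$). Otherwise $x\in p$ and $y\in q$; since $(A,B)$ is a separability witness there is $w\in A(x)\cap B(y)$, so $x\leq w\leq y$, and by $C$-closedness of~$p$ and of~$q$ we get $w\in A(x)\subseteq C(x)\subseteq C^{\go}(x)\subseteq p$ and $w\in B(y)\subseteq C(y)\subseteq C^{\go}(y)\subseteq q$; then $r\eqdef C^{\go}(w)$ lies in~$P$, is contained in~$p\cap q$, and contains $z\eqdef w$, with $x\leq z\leq y$. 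Since~$P$ is a sublattice of~$\fin M$, and hence lower finite, this would settle both forms of the forward implication at once.

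I expect the Interpolation Property to be the delicate point, or rather the tension surrounding it: insisting that the index poset be a genuine sublattice of~$\fin M$ forces closure under intersection, which can manufacture the empty set, whereas the statement demands that every block~$M_p$ be nonempty. Padding every block by the single fixed finite set $K=C^{\go}(x_0)$ is what reconciles the two demands --- if one is content with a lower finite index \emph{poset} rather than a sublattice, the padding is superfluous and one may simply take~$P$ to be the nonempty finite $C$-closed sets with $M_p\eqdef p$. The separability-witness inequality, together with the $C$-closedness of the blocks, is exactly what forces the interpolating element~$w$ to lie in $M_p\cap M_q$, and that is the heart of~\eqref{IP}.
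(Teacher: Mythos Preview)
Your argument is correct and follows essentially the same route as the paper: both directions invoke the same propositions, and the forward direction hinges on the locally finite separability witness from Proposition~\ref{P:FS2LocFin}, taking the index poset to consist of finite $C$-closed subsets and using the witness to verify the Interpolation Property. The only cosmetic difference is in handling nonemptiness of the blocks: the paper fixes $o\in M$ and takes the index set to be the finite closed subsets \emph{containing}~$o$ (already a sublattice of~$\fin{M}$, with~$M_p=p$), whereas you allow~$\es$ into the index set and pad each block by~$K=C^{\go}(x_0)$; both devices work.
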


\begin{proof}
One direction, that any lower finite strong amalgam of finite posets is finitely separable, is provided by Proposition~\ref{P:StrAmFS}.

Let, conversely, $M$ be a finitely separable poset.
By Proposition~\ref{P:FS2LocFin}, $M$ has a separability witness $(A,B)$ such that the set map $A\cup B$ is locally finite.
We say that a subset~$X$ of~$M$ is \emph{closed} if $A(x)\cup B(x)\subseteq X$ whenever $x\in X$.
Fix $o\in M$.
Since the map $A\cup B$ is locally finite, every finite subset of~$M$ is contained in some finite closed subset of~$M$, thus the collection~$\gL$ of all finite closed subsets of~$M$ containing~$\set{o}$ is a sublattice of $(\fin{M},\cup,\cap)$ with $M=\bigcup\gL$.
Now let $P,Q\in\gL$ and let $x\in P$, $y\in Q$ such that $x\leq y$.
Since $(A,B)$ is a separability witness for~$M$, there exists $z\in A(x)\cap B(y)$.
Since~$P$ and~$Q$ are both closed, $z$ belongs to~$P\cap Q$.
Since $x\leq z\leq y$, this yields the Interpolation Property (cf. Definition~\ref{D:StrAmalg}\eqref{IP}) for~$\gL$.
The condition \ref{D:StrAmalg}\eqref{MpIsot} follows from the finiteness of all members of~$\gL$.
\end{proof}

\section{The monotone adjustment of a map}
\label{S:MonAlt}

Now we turn our attention to monotone deviations, the original motivation.

\begin{definition}\label{D:Monot}
Let~$M$ and~$L$ be posets.
A map $d\colon M\times M\to L$ is \emph{monotone} on a subset~$Z$ of~$M\times M$ if $x\leq x'$ and $y'\leq y$ implies that $d(x,y)\leq d(x',y')$ whenever $(x,y),(x',y')\in Z$.
\end{definition}

The proof of the following lemma is routine and we omit it.

\begin{lemma}\label{L:unlhd}
For any elements~$x$, $y$, $x'$, $y'$ in a chain~$(M,\ci)$,  let $\set{x,y}\unlhd\set{x',y'}$ hold if either
$\max\nolimits_{\ci}\set{x,y}\sci\max\nolimits_{\ci}\set{x',y'}$ or \pup{$\max\nolimits_{\ci}\set{x,y}=\max\nolimits_{\ci}\set{x',y'}$ and $\min\nolimits_{\ci}\set{x,y}\ci\min\nolimits_{\ci}\set{x',y'}$}.
Then $\unlhd$ is a  total ordering on $M_{1,2}=\{N\subseteq M\mid |N|\in\{1,2\}\}$ and the following statements hold whenever $x,y,x',y',z\in M$:
\begin{enumerater}
\item
$x\ci y$ if{f} $\{x,z\}\unlhd\{y,z\}$ and $x\sci y$ if{f} $\{x,z\}\lhd\{y,z\}$.

\item
\pup{$x\ci x'$ and $y\ci y'$} implies that $\set{x,y}\unlhd\set{x',y'}$.

\item
$\set{x,y}\lhd\set{x',y'}$ implies that  $x\sci x'$ or $y\sci y'$.

\item 
If $(M,\ci)$ is well-ordered then so is $(M_{1,2},\unlhd)$. 

\end{enumerater}

\end{lemma}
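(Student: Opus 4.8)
The plan is to push everything through the lexicographic order on $M\times M$. First I would note that every $N\in M_{1,2}$ is uniquely of the form $N=\set{a,b}$ with $a\ci b$ (allowing $a=b$ for singletons), so that the map $\Phi\colon N\mapsto\pI{\max\nolimits_{\ci}N,\min\nolimits_{\ci}N}$ is an injection of $M_{1,2}$ into $M\times M$; and, unravelling the definition, $N\unlhd N'$ holds if{f} $\Phi(N)$ precedes or equals $\Phi(N')$ in the lexicographic ordering $\leq_{\mathrm{lex}}$ on $M\times M$ (with the first coordinate dominant). Since $\ci$ is a total ordering, $\leq_{\mathrm{lex}}$ is a total ordering on $M\times M$, hence so is its restriction to $\rng\Phi$; transporting along~$\Phi$ shows that $\unlhd$ is a total ordering on $M_{1,2}$. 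For item~(4) the same encoding applies: the lexicographic product of two well-orderings is a well-ordering, and the restriction of a well-ordering to any subset is again a well-ordering, so $(M\times M,\leq_{\mathrm{lex}})$, and therefore $(M_{1,2},\unlhd)$, are well-ordered whenever $(M,\ci)$ is.

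Next I would prove item~(1). Since both $\ci$ and $\unlhd$ are now known to be total, it suffices to establish the two forward implications ``$x\ci y\Rightarrow\set{x,z}\unlhd\set{y,z}$'' and ``$x\sci y\Rightarrow\set{x,z}\lhd\set{y,z}$''; the converses then follow by contraposition, using antisymmetry of~$\unlhd$ (from $\set{x,z}\unlhd\set{y,z}$ and $y\sci x$ one would get both $\set{x,z}\unlhd\set{y,z}$ and $\set{y,z}\lhd\set{x,z}$, a contradiction). For the forward implications, assuming $x\ci y$, I would split into the three (possibly overlapping, but exhaustive) cases $z\ci x$, $x\ci z\ci y$, and $y\ci z$, and in each case simply read off $\max\nolimits_{\ci}$ and $\min\nolimits_{\ci}$ of $\set{x,z}$ and of $\set{y,z}$ and match the result against the definition of $\unlhd$, noting that the comparison comes out strict exactly when $x\sci y$.

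Finally, items~(2) and~(3) I would deduce formally from~(1). For~(2): from $x\ci x'$, apply~(1) with $z\eqdef y$ to get $\set{x,y}\unlhd\set{x',y}$; from $y\ci y'$, apply~(1) with $z\eqdef x'$ to get $\set{x',y}\unlhd\set{x',y'}$; conclude by transitivity of~$\unlhd$. For~(3): argue contrapositively, i.e., if $x'\ci x$ and $y'\ci y$ then~(2) gives $\set{x',y'}\unlhd\set{x,y}$, contradicting $\set{x,y}\lhd\set{x',y'}$.

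I do not expect any real obstacle: the content is entirely in the first step, the translation of $\unlhd$ into $\leq_{\mathrm{lex}}$, after which (2)--(4) are immediate and (1) is a finite, if slightly fiddly, case check. If any difficulty arises it will be purely notational --- handling the degenerate (singleton) members of $M_{1,2}$ uniformly with the two-element ones, and keeping careful track of strict versus non-strict comparisons.
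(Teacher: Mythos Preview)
Your proposal is correct. The paper itself omits the proof entirely, declaring it ``routine'', so there is nothing to compare against; your lexicographic encoding via $\Phi(N)=(\max_{\ci}N,\min_{\ci}N)$ is exactly the clean way to organize this, and your derivation of~(2) and~(3) from~(1) by two applications plus transitivity (resp.\ contraposition) is tidier than a direct case split would be.
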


The following technical lemma is the key point to our forthcoming definition of the monotone adjustment of a map.

\begin{lemma}\label{L:DefMonAlt}
Let~$M$ and~$L$ be posets, let $d\colon M\times M\to L$, and let~$\ci$ be a \fsh\ total ordering on~$M$.
Denote by~$\unlhd$ the total ordering on~$M_{1,2}$ introduced in Lemma~\textup{\ref{L:unlhd}}.
Let $a,b\in M$ and suppose that~$d$ is monotone on $D_{a,b}\eqdef\setm{(x,y)\in M\times M}{\set{x,y}\lhd\set{a,b}}$.
For each $x\in\set{a,b}$, let~$U_x$ \pup{resp., $V_x$} be an upper shadow \pup{resp., lower shadow} of~$x$ on~$\ds{\sci}{x}$.
We set
 \begin{align*}
 \cA&\eqdef
 \setm{d(x,y)}{\set{x,y}\lhd\set{a,b}\,,\ a\leq x\,,\ y\leq b}\,,\\
 \cA'&\eqdef
 \setm{d(x,b)}{x\in U_a}\cup\setm{d(a,y)}{y\in V_b}\,,\\
 \cB&\eqdef
 \setm{d(x,y)}{\set{x,y}\lhd\set{a,b}\,,\ x\leq a\,,\ b\leq y}\,,\\
 \cB'&\eqdef
 \setm{d(x,b)}{x\in V_a}\cup\setm{d(a,y)}{y\in U_b}\,.
 \end{align*}
Then~$\cA'$ is a coinitial subset of~$\cA$ and~$\cB'$ is a cofinal subset of~$\cB$, both finite if~$U_a$, $V_a$, $U_b$, $V_b$ are finite.
\end{lemma}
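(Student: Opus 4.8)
The plan is to establish the two assertions (that $\cA'$ is a coinitial subset of~$\cA$ and that $\cB'$ is a cofinal subset of~$\cB$) in parallel, viewing the $\cB$-statement as the order-dual of the $\cA$-statement, and to reduce everything to two ingredients: the defining equations of the shadows~$U_a,V_a,U_b,V_b$ (cf.\ Definition~\ref{D:WRC}) and the monotonicity of~$d$ on~$D_{a,b}$, the latter applied together with Lemma~\ref{L:unlhd} to translate between $\ci$-comparisons and $\lhd$-comparisons. The first thing to record is that an upper (resp.\ lower) shadow of~$a$ on~$\ds{\sci}{a}$ consists of $\leq$-upper (resp.\ $\leq$-lower) bounds of~$a$ that lie in~$\ds{\sci}{a}$, and likewise for~$b$; hence every $x\in U_a\cup V_a$ satisfies $x\sci a$ and every $y\in U_b\cup V_b$ satisfies $y\sci b$, so, by the first item of Lemma~\ref{L:unlhd} applied with~$b$ (resp.\ $a$) as the shared element, $\set{x,b}\lhd\set{a,b}$ and $\set{a,y}\lhd\set{a,b}$. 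Reading this against the definitions of~$\cA,\cA',\cB,\cB'$ then yields $\cA'\subseteq\cA$ and $\cB'\subseteq\cB$, while the finiteness clause is immediate, $\cA'$ and~$\cB'$ being, by construction, finite as soon as the four shadows are.

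For coinitiality, fix $d(x,y)\in\cA$, so $\set{x,y}\lhd\set{a,b}$, $a\leq x$ and $y\leq b$, and invoke the third item of Lemma~\ref{L:unlhd} to obtain $x\sci a$ or $y\sci b$. If $x\sci a$, then $x\in\ds{\sci}{a}\cap\us{\leq}{a}=\ds{\sci}{a}\cap\us{\leq}{U_a}$, so some $x'\in U_a$ satisfies $x'\leq x$; since $x'\sci a$ gives $(x',b)\in D_{a,b}$ and $(x,y)\in D_{a,b}$ already, monotonicity of~$d$ on~$D_{a,b}$ converts $x'\leq x$ and $y\leq b$ into $d(x',b)\leq d(x,y)$, with $d(x',b)\in\cA'$. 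If $y\sci b$, then $y\in\ds{\sci}{b}\cap\ds{\leq}{b}=\ds{\sci}{b}\cap\ds{\leq}{V_b}$, so some $y'\in V_b$ satisfies $y\leq y'$, giving $(a,y')\in D_{a,b}$, and monotonicity applied to $a\leq x$ and $y\leq y'$ yields $d(a,y')\leq d(x,y)$, with $d(a,y')\in\cA'$. This proves $\cA'$ coinitial in~$\cA$.

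The cofinality of~$\cB'$ in~$\cB$ is obtained by running the identical argument with every~$\leq$ reversed: from $d(x,y)\in\cB$ one has $x\leq a$ and $b\leq y$; in the case $x\sci a$ one uses the lower-shadow equation for~$V_a$ to extract $x'\in V_a$ with $x\leq x'$ and concludes $d(x,y)\leq d(x',b)$, and in the case $y\sci b$ one uses the upper-shadow equation for~$U_b$ to extract $y'\in U_b$ with $y'\leq y$ and concludes $d(x,y)\leq d(a,y')$; in both cases the relevant pair again lies in~$D_{a,b}$ by the first item of Lemma~\ref{L:unlhd}. The step demanding care is purely bookkeeping, not conceptual: at every use of Definition~\ref{D:Monot} one must verify that \emph{both} pairs involved lie in~$D_{a,b}$---this is exactly where the equivalences $x'\sci a\Leftrightarrow\set{x',b}\lhd\set{a,b}$ and $y'\sci b\Leftrightarrow\set{a,y'}\lhd\set{a,b}$ from Lemma~\ref{L:unlhd} are used---and that the two comparisons required by monotonicity are oriented in the direction that pushes $d$-values downward on the~$\cA$ side and upward on the~$\cB$ side. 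I anticipate no obstacle beyond this.
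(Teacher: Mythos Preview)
Your proof is correct and follows essentially the same route as the paper's: showing $\cA'\subseteq\cA$ via item~(1) of Lemma~\ref{L:unlhd}, then using item~(3) to split into the cases $x\sci a$ and $y\sci b$, and in each case extracting a witness from the appropriate shadow and applying monotonicity on~$D_{a,b}$. Your write-up is in fact slightly more explicit than the paper's in verifying that both pairs lie in~$D_{a,b}$ before invoking monotonicity, and in spelling out the shadow equalities $\ds{\sci}{a}\cap\us{\leq}{a}=\ds{\sci}{a}\cap\us{\leq}{U_a}$ etc., but the argument is the same.
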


\begin{proof}
We establish for example the part about~$\cA$ and~$\cA'$; the proof for~$\cB$ and~$\cB'$ is similar.
The containment $\cA'\subseteq\cA$ follows immediately from Lemma~\ref{L:unlhd}.
Now let $\set{x,y}\lhd\set{a,b}$, $a\leq x$, $y\leq b$.
By Lemma~\ref{L:unlhd}, either $x\sci a$ or $y\sci b$.
In the first case, there exists $u\in U_a$ such that $u\leq x$.
Since $\{u,b\}\lhd\set{a,b}$ (cf. Lemma~\ref{L:unlhd}), $\set{x,y}\lhd\set{a,b}$, $u\leq x$, and $y\leq b$, it follows that $d(x,y)\geq d(u,b)$, with $d(u,b)\in\cA'$.
In the second case, there exists $v\in V_b$ such that $y\leq v$.
By a similar argument to the above, $d(x,y)\geq d(a,v)$ with $d(a,v)\in\cA'$.
\end{proof}

\begin{definition}
For any poset~$L$ and any $X\subseteq L$, say that an element~$a$ is the \emph{finitary join} of~$X$ if there exists a finite cofinal subset~$X'$ of~$X$ such that $a=\bigvee X'$ \pup{of course in such a case $a=\bigvee X$ as well}.
Finitary meets are defined dually.
\end{definition}

\begin{proposition}\label{P:ConstrMonAlt}
Let~$M$ be a poset, let~$D$ be a lattice, let $d\colon M\times M\to D$, and let~$\ci$ be a \fsh\ well-ordering on~$M$.
Denote by~$\unlhd$ the well-ordering on~$M_{1,2}$ introduced in Lemma~\textup{\ref{L:unlhd}}.
Then there exists a \pup{necessarily unique} monotone map $d'\colon M\times M\to D$ such that for all $a,b\in M$, $d'(a,b)=d'_{\land}(a,b)\vee d'_{\lor}(a,b)$ where
 \begin{align}
 d'_{\land}(a,b)&\eqdef d(a,b)\wedge\bigwedge
 \setm{d'(x,y)}{\set{x,y}\lhd\set{a,b}\,,\ a\leq x\,,\ y\leq b}\,,
 \label{Eq:d'land}\\
 d'_{\lor}(a,b)&\eqdef\bigvee
 \setm{d'(x,y)}{\set{x,y}\lhd\set{a,b}\,,\ x\leq a\,,\ b\leq y}
 \label{Eq:d'lor}
 \end{align}
\pup{where as usual, the empty meet is a new top element and the empty join is a new bottom element}
are a finitary meet and a finitary join, respectively.
\end{proposition}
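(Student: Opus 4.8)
The plan is to construct $d'$ by transfinite recursion along the well-ordering~$\unlhd$ of~$M_{1,2}$ supplied by Lemma~\ref{L:unlhd}, using~\eqref{Eq:d'land} and~\eqref{Eq:d'lor} as the recursion scheme and carrying monotonicity inside the induction hypothesis. Precisely, I would prove by $\unlhd$-induction on $\set{a,b}\in M_{1,2}$ the statement: \emph{there is a unique map defined on $\ol{D}_{a,b}\eqdef\setm{(x,y)\in M\times M}{\set{x,y}\unlhd\set{a,b}}$ that satisfies~\eqref{Eq:d'land} and~\eqref{Eq:d'lor}, with finitary meet and finitary join, at every member of~$\ol{D}_{a,b}$ and that is monotone on~$\ol{D}_{a,b}$.} The union of these partial maps over all $\set{a,b}\in M_{1,2}$ is then the desired~$d'$: uniqueness is automatic from the recursion, and monotonicity on all of $M\times M$ follows because any two comparable pairs $(p,q)$, $(p',q')$ both lie in $\ol{D}_{a,b}$ for $\set{a,b}$ the $\unlhd$-larger of $\set{p,q}$ and $\set{p',q'}$.

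For the inductive step at~$\set{a,b}$, I would first merge the maps furnished by the induction hypothesis at all $\set{x,y}\lhd\set{a,b}$ into a single map on $D_{a,b}=\bigcup_{\set{x,y}\lhd\set{a,b}}\ol{D}_{x,y}$; this merge is consistent by the uniqueness clause, and it is monotone on~$D_{a,b}$ because any comparable pair of elements of~$D_{a,b}$ already lies in a common~$\ol{D}_{x,y}$ with $\set{x,y}\lhd\set{a,b}$. Thus $d'$ is monotone on~$D_{a,b}$, which is exactly the hypothesis of Lemma~\ref{L:DefMonAlt} with $d'$ in the role of that lemma's~$d$. Choosing finite upper and lower shadows $U_a,V_a$ of~$a$ on~$\ds{\sci}{a}$ and $U_b,V_b$ of~$b$ on~$\ds{\sci}{b}$ --- these are finite because~$\ci$ is finitely shadowing, cf.\ Proposition~\ref{P:WRCWOchar} --- Lemma~\ref{L:DefMonAlt} tells us that the set~$\cA$ in~\eqref{Eq:d'land} (written with~$d'$) has the finite coinitial subset~$\cA'$ and the set~$\cB$ in~\eqref{Eq:d'lor} has the finite cofinal subset~$\cB'$; here one also notes, via Lemma~\ref{L:unlhd}, that all the values entering~$\cA'$ and~$\cB'$ are already defined, since $x\sci a$ forces $\set{x,b}\lhd\set{a,b}$ and $y\sci b$ forces $\set{a,y}\lhd\set{a,b}$. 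Hence the meet and join in~\eqref{Eq:d'land} and~\eqref{Eq:d'lor} are finitary, $d'_{\land}(a,b)=d(a,b)\wedge\bigwedge\cA'$ and $d'_{\lor}(a,b)=\bigvee\cB'$ are well-defined elements of~$D$ (with the stated conventions for the empty meet and join), and I set $d'(a,b)\eqdef d'_{\land}(a,b)\vee d'_{\lor}(a,b)$, and symmetrically~$d'(b,a)$.

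What remains is to check that the extended~$d'$ is monotone on $\ol{D}_{a,b}=D_{a,b}\cup\set{(a,b),(b,a)}$, i.e., the comparisons involving one of the two new pairs. If $(p,q)\in D_{a,b}$ with $p\leq a$ and $b\leq q$, then $d'(p,q)$ is one of the terms of the join defining~$d'_{\lor}(a,b)$, so $d'(p,q)\leq d'(a,b)$; and symmetrically for~$(b,a)$. If $(p',q')\in D_{a,b}$ with $a\leq p'$ and $q'\leq b$, then $d'_{\land}(a,b)\leq d'(p',q')$ because $d'(p',q')$ is one of the terms met in~\eqref{Eq:d'land}, while $d'_{\lor}(a,b)\leq d'(p',q')$ by monotonicity of~$d'$ on~$D_{a,b}$ (each term $d'(x,y)$ of~$d'_{\lor}(a,b)$ has $x\leq a\leq p'$ and $q'\leq b\leq y$); hence $d'(a,b)\leq d'(p',q')$, and symmetrically for~$(b,a)$. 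The delicate case, which I expect to be the main obstacle since it is not covered by the ``join'' and ``meet'' bounds, is the comparison of $(a,b)$ with $(b,a)$: when $a\leq b$, I would set $c\eqdef\min_{\ci}\set{a,b}$, so that $a\leq c\leq b$ and $\set{c}\lhd\set{a,b}$, and observe that $d'(c,c)$ is simultaneously one of the terms met in~\eqref{Eq:d'land} for~$d'_{\land}(a,b)$ and one of the terms joined in~\eqref{Eq:d'lor} for~$d'_{\lor}(b,a)$, whence $d'_{\land}(a,b)\leq d'(c,c)\leq d'(b,a)$; moreover every term $d'(x,y)$ of~$d'_{\lor}(a,b)$ has $x\leq a\leq b$ and $a\leq b\leq y$, so is also a term of~$d'_{\lor}(b,a)$, which gives $d'_{\lor}(a,b)\leq d'(b,a)$ and therefore $d'(a,b)\leq d'(b,a)$; the case $b\leq a$ is symmetric. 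This completes the induction.
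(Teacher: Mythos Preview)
Your proof is correct and follows essentially the same approach as the paper's: a $\unlhd$-induction that carries monotonicity on $D_{a,b}\cup\set{(a,b),(b,a)}$, invokes Lemma~\ref{L:DefMonAlt} for finitariness, and then checks the new comparisons case by case. The only cosmetic difference is in the comparison of $(a,b)$ with $(b,a)$ when $a<b$: the paper simply notes that with $c\eqdef\min_{\ci}\set{a,b}$ the pair $(c,c)$ lies in $D_{a,b}$, so the \emph{already established} cases give $d'(a,b)\leq d'(c,c)\leq d'(b,a)$ directly, whereas you reopen the definitions and bound $d'_{\land}(a,b)$ and $d'_{\lor}(a,b)$ separately --- both arguments are valid and amount to the same thing.
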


\begin{proof}
We argue by $\unlhd$-induction. We prove that, for every $a,b\in M$,  $d'$  is correctly defined and monotone on 
$D_{a,b}\cup\set{(a,b),(b,a)}$. Our induction hypothesis implies that~$d'$ is correctly defined and monotone on~$D_{a,b}$.
By Lemma~\ref{L:DefMonAlt} applied to the restriction of~$d'$ to~$D_{a,b}$, the meet $\bigwedge
 \setm{d'(x,y)}{\set{x,y}\lhd\set{a,b}\,,\ a\leq x\,,\ y\leq b}$ (i.e., the second meetand of~\eqref{Eq:d'land}) and the join~\eqref{Eq:d'lor} are both finitary, so $d'_{\land}(a,b)$, $d'_{\lor}(a,b)$, $d'(a,b)$, as well as
 $d'_{\land}(b,a)$, $d'_{\lor}(b,a)$, $d'(b,a)$ are all well defined.

It remains to verify that~$d'$ is monotone on~$D_{a,b}\cup\set{(a,b),(b,a)}$.
This verification splits into several cases.

First, we need to argue that for all $(x,y)\in D_{a,b}$, $x\leq a$ and $b\leq y$ implies $d'(x,y)\leq d'(a,b)$.
This is obvious, because $d'(x,y)$ is then a joinand of $d'_{\lor}(a,b)$. The same argument applies when $a$ and $b$ 
are interchanged.

The second case consists of verifying that for all $(x,y)\in D_{a,b}$, $a\leq x$ and $y\leq b$ implies that $d'(a,b)\leq d'(x,y)$; that is, $d'_{\land}(a,b)\leq d'(x,y)$ and $d'_{\lor}(a,b)\leq d'(x,y)$.
The first inequality is obvious, because $d'(x,y)$ is then a meetand of $d'_{\land}(a,b)$.
In order to prove the second inequality, we must verify that for any $(u,v)\in D_{a,b}$, $u\leq a$ and $b\leq v$ implies that $d'(u,v)\leq d'(x,y)$. This follows from our induction hypothesis, because $u\leq a\leq x$ and $y\leq b\leq v$.
The same argument applies when $a$ and $b$ 
are interchanged.

Finally, we need to prove that $d'(a,b)\leq d'(b,a)$ when $a<b$. If $a\ci b$ then $(a,a)\in D_{a,b}$ and we have already proved that 
$d'(a,b)\leq d'(a,a)\leq d'(b,a)$. If $b\ci a$ then $(b,b)\in D_{a,b}$ and we have already proved that 
$d'(a,b)\leq d'(b,b)\leq d'(b,a)$.
\end{proof}

Owing to Proposition~\ref{P:ConstrMonAlt}, the map~$d'$ will be called the \emph{monotone adjustment} of~$d$.
It depends not only of~$d$, but also of the chosen \fsh\ well-ordering~$\ci$ of~$M$.
Note that~$d$ is monotone if{f} $d=d'$.

Our next two propositions will entail that if~$d$ is a deviation, then so is~$d'$.
Because of possible future applications, we prove them under slightly more general assumptions.

\begin{proposition}\label{P:MonAlt2Iso}
Let~$M$ be a poset, let~$D$ be a distributive lattice, let\newline $d\colon M\times M\to D$, and let $f\colon M\to D$ be an isotone map.
We denote by~$d'$ the monotone adjustment of~$d$ with respect to a \fsh\ well-ordering~$\ci$ of~$M$.
If $f(x)\leq f(y)\vee d(x,y)$ whenever $x,y\in M$, then $f(a)\leq f(b)\vee d'(a,b)$ whenever $a,b\in M$.
\end{proposition}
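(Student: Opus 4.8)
The plan is to argue by $\unlhd$-induction on the pair $\set{a,b}\in M_{1,2}$, in parallel with the construction of~$d'$ in Proposition~\ref{P:ConstrMonAlt}. At the step indexed by $\set{a,b}$ I prove both $f(a)\leq f(b)\vee d'(a,b)$ and $f(b)\leq f(a)\vee d'(b,a)$ at once; since these two statements are exchanged by swapping the roles of~$a$ and~$b$, it suffices to establish the first, and the induction hypothesis may be read as the assertion that $f(x)\leq f(y)\vee d'(x,y)$ holds for every ordered pair $(x,y)$ with $\set{x,y}\lhd\set{a,b}$.

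The engine of the proof is a distributive-law computation. By Proposition~\ref{P:ConstrMonAlt} the meet occurring in~\eqref{Eq:d'land} is \emph{finitary}, so I may write $d'_{\land}(a,b)=\bigwedge_{i\in I}c_i$ with~$I$ finite, where one distinguished index $i_0\in I$ has $c_{i_0}=d(a,b)$ and every other $c_i$ equals $d'(x_i,y_i)$ for some $(x_i,y_i)$ with $\set{x_i,y_i}\lhd\set{a,b}$, $a\leq x_i$, and $y_i\leq b$ (an empty meet simply meaning $I=\set{i_0}$). Put $g\eqdef f(b)\vee d'_{\lor}(a,b)$. Then $f(b)\vee d'(a,b)=g\vee d'_{\land}(a,b)=g\vee\bigwedge_{i\in I}c_i=\bigwedge_{i\in I}(g\vee c_i)$, using distributivity of~$D$ together with the finiteness of~$I$. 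Hence it is enough to check $f(a)\leq g\vee c_i$ for every $i\in I$.

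For $i=i_0$ this is immediate: $g\geq f(b)$, so $g\vee c_{i_0}\geq f(b)\vee d(a,b)\geq f(a)$ by the hypothesis of the proposition. For $i\neq i_0$ I chain $f(a)\leq f(x_i)$ (because $a\leq x_i$ and~$f$ is isotone), then $f(x_i)\leq f(y_i)\vee d'(x_i,y_i)=f(y_i)\vee c_i$ (the induction hypothesis, applicable since $\set{x_i,y_i}\lhd\set{a,b}$), then $f(y_i)\leq f(b)\leq g$ (because $y_i\leq b$ and~$f$ is isotone); together these give $f(a)\leq g\vee c_i$, which closes the induction. I do not anticipate any real obstacle here: the only points demanding care are that the distributive law be applied to a genuinely finite meet --- which is exactly what ``finitary'' provides --- and the harmless bookkeeping between the unordered index~$\set{a,b}$ and the asymmetric target inequality. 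Note in particular that $d'_{\lor}(a,b)$ is never used beyond the trivial bound $g\geq f(b)$, so it is merely carried along.
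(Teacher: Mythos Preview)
Your argument is correct and follows the paper's proof essentially verbatim: both argue by $\unlhd$-induction, reduce to a finite meet via the finitary clause of Proposition~\ref{P:ConstrMonAlt}, and then use distributivity together with isotony of~$f$ and the induction hypothesis to dispatch each meetand by the chain $f(a)\leq f(x_i)\leq f(y_i)\vee d'(x_i,y_i)\leq f(b)\vee d'(x_i,y_i)$. The only cosmetic differences are that the paper discards~$d'_{\lor}(a,b)$ at the outset (reducing immediately to $f(a)\leq f(b)\vee d'_{\land}(a,b)$) rather than carrying it inside your~$g$, and that you are more explicit about the meetand $c_{i_0}=d(a,b)$ and about treating both ordered pairs $(a,b)$, $(b,a)$ at the same $\unlhd$-step.
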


\begin{proof}
We argue by $\unlhd$-induction. Let $a,b\in M$ and suppose that the inequality $f(x)\leq f(y)\vee d'(x,y)$ holds whenever $\set{x,y}\lhd\set{a,b}$.
In order to prove that\linebreak $f(a)\leq f(b)\vee d'(a,b)$
it suffices to verify that $f(a)\leq f(b)\vee d'_{\land}(a,b)$.
Since the meet defining~$d'_{\land}(a,b)$ is finitary, it follows from the distributivity of~$D$ that it suffices to verify that for all 
$(x,y)\in D_{a,b}$, $a\leq x$ and $y\leq b$ implies that $f(a)\leq f(b)\vee d'(x,y)$.
Since~$f$ is isotone and by our induction hypothesis, we get the inequalities
 \begin{equation*}
 f(a)\leq f(x)\leq f(y)\vee d'(x,y)\leq f(b)\vee d'(x,y)\,.
 \tag*{\qed}
 \end{equation*}
\renewcommand{\qed}{}
\end{proof}

Say that a $0$-lattice~$D$ is \emph{$0$-distributive} if $x\wedge z=y\wedge z=0$ implies $(x\vee y)\wedge z=0$ whenever $x,y,z\in D$.

\begin{proposition}\label{P:MonAlt2Norm}
Let~$M$ be a poset, let~$D$ be a $0$-distributive $0$-lattice, and let \newline $d\colon M\times M\to D$.
We denote by~$d'$ the monotone adjustment of~$d$ with respect to a \fsh\ well-ordering~$\ci$ of~$M$.
If $d(x,y)\wedge d(y,x)=0$ whenever $x,y\in M$, then $d'(a,b)\wedge d'(b,a)=0$ whenever $a,b\in M$.
\end{proposition}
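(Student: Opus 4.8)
The plan is to argue by $\unlhd$-induction, exactly as in the proof of Proposition~\ref{P:ConstrMonAlt} and Proposition~\ref{P:MonAlt2Iso}. Fix $a,b\in M$ and assume, as induction hypothesis, that $d'(x,y)\wedge d'(y,x)=0$ whenever $\set{x,y}\lhd\set{a,b}$. We must show $d'(a,b)\wedge d'(b,a)=0$. Writing $d'(a,b)=d'_{\land}(a,b)\vee d'_{\lor}(a,b)$ and similarly for $d'(b,a)$, and using that $D$ is $0$-distributive (so a finite join of elements each meeting a fixed element $z$ in $0$ again meets $z$ in $0$), it is enough to control the four ``cross'' meets, and in fact, since $d'_{\land}(a,b)\leq d'(a,b)$ etc., the whole thing reduces to showing $d'_{\land}(a,b)\wedge d'_{\land}(b,a)=0$ after absorbing the $\lor$-parts into the $\land$-parts of the other coordinate. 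The key observation is that $d'_{\lor}(b,a)$ is a finitary join of terms $d'(u,v)$ with $u\leq b$ and $a\leq v$, i.e.\ with $v\geq a$ and $u\leq b$, and such a term is $\geq d'_{\land}(b,a)$-type data from the perspective of the pair $(a,b)$ reversed; more precisely, each such $d'(u,v)$ is a \emph{meetand} defining $d'_{\land}(a,b)$ (since $a\leq v$, $u\leq b$ means, relabelling, that $(v,u)$ satisfies $a\leq v$, $u\leq b$), so $d'_{\land}(a,b)\leq d'(v,u)$, and by the induction hypothesis $d'(v,u)\wedge d'(u,v)=0$. This is the heart of the matter.

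\begin{proof}
We argue by $\unlhd$-induction. Let $a,b\in M$ and assume that $d'(x,y)\wedge d'(y,x)=0$ whenever $\set{x,y}\lhd\set{a,b}$. We prove $d'(a,b)\wedge d'(b,a)=0$.

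Since the meet defining $d'_{\land}(a,b)$ is finitary and, likewise, the joins defining $d'_{\lor}(a,b)$ and $d'_{\lor}(b,a)$ are finitary, and since $D$ is $0$-distributive, it suffices to prove that
 \[
 d'_{\land}(a,b)\wedge d'_{\lor}(b,a)=0\,,\qquad
 d'_{\lor}(a,b)\wedge d'_{\land}(b,a)=0\,,\qquad
 d'_{\lor}(a,b)\wedge d'_{\lor}(b,a)=0\,,
 \]
 together with $d'_{\land}(a,b)\wedge d'_{\land}(b,a)=0$; indeed, then
 \begin{align*}
 d'(a,b)\wedge d'(b,a)
 &=\pI{d'_{\land}(a,b)\vee d'_{\lor}(a,b)}\wedge\pI{d'_{\land}(b,a)\vee d'_{\lor}(b,a)}
 \end{align*}
 is a join, distributed through $D$, of the four pairwise meets above, hence $0$.

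\textbf{The meet $d'_{\land}(a,b)\wedge d'_{\lor}(b,a)$.}
Since $d'_{\lor}(b,a)$ is a finitary join of elements $d'(u,v)$ with $\set{u,v}\lhd\set{b,a}=\set{a,b}$, $u\leq b$, and $a\leq v$, and since $D$ is $0$-distributive, it suffices to fix such a pair $(u,v)$ and show $d'_{\land}(a,b)\wedge d'(u,v)=0$. Now $\set{v,u}\lhd\set{a,b}$, $a\leq v$, and $u\leq b$, so $d'(v,u)$ is one of the elements whose meet (together with $d(a,b)$) defines $d'_{\land}(a,b)$; hence $d'_{\land}(a,b)\leq d'(v,u)$. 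By the induction hypothesis, $d'(v,u)\wedge d'(u,v)=0$, whence $d'_{\land}(a,b)\wedge d'(u,v)=0$.

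\textbf{The meet $d'_{\lor}(a,b)\wedge d'_{\land}(b,a)$.}
This is symmetric to the previous case, with $a$ and $b$ interchanged.

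\textbf{The meet $d'_{\lor}(a,b)\wedge d'_{\lor}(b,a)$.}
Using $0$-distributivity twice, it suffices to fix $(x,y)$ with $\set{x,y}\lhd\set{a,b}$, $x\leq a$, $b\leq y$, and $(u,v)$ with $\set{u,v}\lhd\set{a,b}$, $u\leq b$, $a\leq v$, and to show $d'(x,y)\wedge d'(u,v)=0$. We compare $\set{x,y}$ and $\set{u,v}$ in the well-ordering $\unlhd$. Suppose first $\set{x,y}\unlhd\set{u,v}$. Then, since $x\leq a\leq v$ and $u\leq b\leq y$, monotonicity of $d'$ on $D_{u,v}\cup\set{(u,v),(v,u)}$ — available because $\set{u,v}\lhd\set{a,b}$ puts the pair $(u,v)$ strictly below $\set{a,b}$, so Proposition~\ref{P:ConstrMonAlt} applies — gives $d'(x,y)\leq d'(v,u)$; and $\set{u,v}\lhd\set{a,b}$ lets us apply the induction hypothesis to get $d'(v,u)\wedge d'(u,v)=0$, whence $d'(x,y)\wedge d'(u,v)=0$. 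If instead $\set{u,v}\lhd\set{x,y}$, then symmetrically $d'(u,v)\leq d'(y,x)$ and $d'(y,x)\wedge d'(x,y)=0$ by the induction hypothesis, so again $d'(x,y)\wedge d'(u,v)=0$. (If $\set{x,y}=\set{u,v}$, then either $x=u,y=v$, giving $x\leq a\leq v=y\leq b\leq y$ hence $x\leq y$ and likewise $y\leq x$ so $x=y$ and the term is $d'(x,x)$, which meets itself in $0$ by the hypothesis applied at a strictly smaller stage, or the term equals $d'(x,x)$ anyway; in all cases the meet is $0$.)

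\textbf{The meet $d'_{\land}(a,b)\wedge d'_{\land}(b,a)$.}
Recall $d'_{\land}(a,b)=d(a,b)\wedge\bigwedge\setm{d'(x,y)}{\set{x,y}\lhd\set{a,b},\ a\leq x,\ y\leq b}$ and $d'_{\land}(b,a)=d(b,a)\wedge\bigwedge\setm{d'(u,v)}{\set{u,v}\lhd\set{a,b},\ b\leq u,\ v\leq a}$. In particular $d'_{\land}(a,b)\leq d(a,b)$ and $d'_{\land}(b,a)\leq d(b,a)$, so $d'_{\land}(a,b)\wedge d'_{\land}(b,a)\leq d(a,b)\wedge d(b,a)=0$ by hypothesis.

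Combining the four displays, $d'(a,b)\wedge d'(b,a)=0$, completing the induction.
\end{proof}

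\textbf{The main obstacle} I anticipate is the $d'_{\lor}\wedge d'_{\lor}$ case: there one cannot reduce to the hypothesis on $d$ directly, and must instead play the two pairs $\set{x,y}$ and $\set{u,v}$ against each other in the well-ordering $\unlhd$, invoking monotonicity of $d'$ at whichever pair is larger to reposition one term below a reflection $d'(v,u)$ or $d'(y,x)$ of the other, and only then applying the inductive instance of the conclusion. Care is needed that $\set{u,v}\lhd\set{a,b}$ (and likewise for $\set{x,y}$) genuinely makes the pair eligible both for monotonicity (Proposition~\ref{P:ConstrMonAlt}, which gives monotonicity on $D_{u,v}\cup\set{(u,v),(v,u)}$) and for the induction hypothesis of the present proposition; both hold because being a joinand in $d'_{\lor}$ forces $\set{\cdot,\cdot}\lhd\set{a,b}$ by construction. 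The other three cases are essentially bookkeeping: two reduce by symmetry to the first analysis, and the $\land\wedge\land$ case collapses immediately to the hypothesis $d(a,b)\wedge d(b,a)=0$.
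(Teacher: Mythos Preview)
Your proof is correct and follows essentially the same approach as the paper: $\unlhd$-induction, splitting $d'(a,b)\wedge d'(b,a)$ into the four $\land/\lor$ cross-terms and dispatching each via $0$-distributivity, the induction hypothesis, and monotonicity of~$d'$. In the $d'_{\lor}\wedge d'_{\lor}$ case you compare $\{x,y\}$ and $\{u,v\}$ directly in~$\unlhd$, whereas the paper compares $v$ and $y$ in~$\ci$ and passes through an auxiliary pair $\{x,v\}$ or $\{u,y\}$; both routes work equally well. One cosmetic point: your parenthetical about the equality case $\{x,y\}=\{u,v\}$ is unnecessary (that case is already covered by $\{x,y\}\unlhd\{u,v\}$, since $\unlhd$ is reflexive) and the chain of inequalities written there is garbled --- just delete it.
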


\begin{proof}
We argue by $\unlhd$-induction.
Let $a,b\in M$ and suppose that  $d'(x,y)\wedge d'(y,x)=\nobreak0$ whenever $\set{x,y}\lhd\set{a,b}$.
Since~$D$ is $0$-distributive, the proof breaks up into four statements.
\begin{itemize}
\item
$d'_{\land}(a,b)\wedge d'_{\land}(b,a)=0$.
Since $d'_{\land}(a,b)\leq d(a,b)$, $d'_{\land}(b,a)\leq d(b,a)$, and $d(a,b)\wedge d(b,a)=0$, this is obvious.

\item
$d'_{\land}(a,b)\wedge d'_{\lor}(b,a)=0$.
Since~$D$ is $0$-distributive, it suffices to prove that for any $(x,y)\in D_{a,b}$, $a\leq x$ and $y\leq b$ implies that $d'_{\land}(a,b)\wedge d'_{\lor}(y,x)=0$.
Since $d'(x,y)$ is a meetand of~$d'_{\land}(a,b)$, we get $d'(a,b)\leq d'(x,y)$.
By our induction hypothesis, $d'(x,y)\wedge d'(y,x)=0$, so we are done.

\item
$d'_{\lor}(a,b)\wedge d'_{\land}(b,a)=0$.
This case is symmetric to the case above.

\item
$d'_{\lor}(a,b)\wedge d'_{\lor}(b,a)=0$.
Since~$D$ is $0$-distributive, it suffices to prove that for any $(x,y),(u,v)\in D_{a,b}$, $x\leq a\leq u$ and $v\leq b\leq y$ implies that $d'(x,y)\wedge d'(v,u)=0$.
Since~$\ci$ is a total ordering, either $v\ci y$ or $y\ci v$.
In the first case, then (cf. Lemma~\ref{L:unlhd}) $\set{x,v}\unlhd\set{x,y}\lhd\set{a,b}$, thus, by our induction hypothesis, $d'(x,v)\wedge d'(v,x)=0$.
Since $d'(x,y)\leq d'(x,v)$ and $d'(v,u)\leq d'(v,x)$, the desired conclusion follows.
In the second case, then (cf. Lemma~\ref{L:unlhd}) $\set{u,y}\unlhd\set{u,v}\lhd\set{a,b}$, thus, by our induction hypothesis, $d'(u,y)\wedge d'(y,u)=0$.
Since $d'(x,y)\leq d'(u,y)$ and $d'(v,u)\leq d'(y,u)$, the desired conclusion follows.\qed
\end{itemize}
\renewcommand{\qed}{}
\end{proof}

By taking~$M=D$, $f=\id_D$, and~$d$ any deviation on~$D$ in Propositions~\ref{P:ConstrMonAlt}, \ref{P:MonAlt2Iso}, and~\ref{P:MonAlt2Norm}, we obtain immediately the following.

\begin{theorem}\label{T:DevFS}
Every finitely separable \cn\ lattice has a monotone deviation.
\end{theorem}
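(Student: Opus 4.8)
The plan is to obtain Theorem~\ref{T:DevFS} as an immediate consequence of the constructions of Sections~\ref{S:WRCWO} and~\ref{S:MonAlt}, specialized to $M=D$. Let $D$ be a finitely separable \cn\ lattice. First I would fix a deviation~$d$ on~$D$ (viewed as a map $d\colon D\times D\to D$); such a~$d$ exists precisely because~$D$ is \cn, as recalled in the introduction. Next, since~$D$ is finitely separable, Proposition~\ref{P:EquivFSWRC} provides a \fsh\ well-ordering~$\ci$ on~$D$. Feeding~$d$ and~$\ci$ into Proposition~\ref{P:ConstrMonAlt} (with $M=D$) yields the monotone adjustment~$d'\colon D\times D\to D$ of~$d$, which is monotone by construction --- so the ``monotone'' half of the claim is free.

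It then remains to verify that~$d'$ is a deviation, that is, that $a\leq b\vee d'(a,b)$ and $d'(a,b)\wedge d'(b,a)=0$ for all $a,b\in D$. For the first relation I would apply Proposition~\ref{P:MonAlt2Iso} with the distributive lattice taken to be~$D$ itself and $f=\id_D$: its hypothesis, $f(x)\leq f(y)\vee d(x,y)$, is exactly the first deviation axiom for~$d$, so the conclusion reads $a\leq b\vee d'(a,b)$. For the second relation I would first observe that any distributive $0$-lattice is $0$-distributive, since $x\wedge z=y\wedge z=0$ gives $(x\vee y)\wedge z=(x\wedge z)\vee(y\wedge z)=0$; thus Proposition~\ref{P:MonAlt2Norm} applies to~$D$, and its hypothesis $d(x,y)\wedge d(y,x)=0$ (the second deviation axiom for~$d$) yields $d'(a,b)\wedge d'(b,a)=0$. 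Combining the two, $d'$ is a monotone deviation on~$D$.

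In truth there is no genuine obstacle left at this stage; the work has all been done upstream, in the passage from finite separability to a \fsh\ well-ordering (Proposition~\ref{P:EquivFSWRC}) and in the inductive construction and analysis of the monotone adjustment (Propositions~\ref{P:ConstrMonAlt}, \ref{P:MonAlt2Iso}, and~\ref{P:MonAlt2Norm}). The only points deserving a moment's attention are bookkeeping ones: that a \cn\ lattice is automatically a distributive $0$-lattice (hence $0$-distributive) and carries a deviation at all. It is also worth flagging what the theorem does \emph{not} deliver: the monotone adjustment is not asserted to preserve the Cevian inequality $x\sd z\leq(x\sd y)\vee(y\sd z)$, so this argument produces a monotone deviation only, and the stronger monotone-Cevian conclusions for principal \lidl\ lattices of free Abelian \lgrp{s} will require the extra ingredients (the Belluce map, free $\kk$-vector lattices) developed in the following sections.
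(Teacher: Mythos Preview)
Your proposal is correct and matches the paper's own proof essentially line for line: the paper derives Theorem~\ref{T:DevFS} by setting $M=D$, $f=\id_D$, and~$d$ any deviation on~$D$ in Propositions~\ref{P:ConstrMonAlt}, \ref{P:MonAlt2Iso}, and~\ref{P:MonAlt2Norm}, with the \fsh\ well-ordering supplied (implicitly) by Proposition~\ref{P:EquivFSWRC}. Your added remarks---that distributivity yields $0$-distributivity and that the Cevian inequality is not preserved---are accurate side observations that the paper does not spell out at this point.
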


\section{Lattices of principal \lidl{s} in  Abelian \lgrp{s}}
\label{S:DevCX}

We will always denote \lgrp{s} additively and set $|a|\eqdef a\vee(-a)$ whenever $a\in G$.
The convex \lsgrp\ generated by~$a$ is
 \[
 \seq{a}\eqdef\{x\in G\mid -n|a|\leq x\leq n|a|\
 \text{for some}\ n\in\omega\}\,.
 \]
Clearly, for any $a,b\in G^+$, $\seq{a}\subseteq\seq{b}$ if{f} $a\leq mb$ for some $m\in\go$.
All these convex \lsgrp{s} form a \cn\ lattice~$\Csc{G}$, often denoted~$\Idc{G}$ if~$G$ is Abelian (for in that case, \lidl{s} and convex \lsgrp{s} are the same).
A deviation on~$\Csc{G}$ can be defined by choosing a positive generator for each convex \lsgrp\ and setting
 \[
 \seq{a}\sd\seq{b}\eqdef\seq{(a-b)^+}\,,
 \]
where  $x^+\eqdef x\vee 0$.
Of course, this operation depends on the choice of the generators~$a$, $b$.
It is always Cevian (cf. \cite[Proposition~5.5]{Ceva}).
It will be monotone if we could choose the generators in an isotone way, that is  $\seq{a}\leq\seq{b}$ implies $a\leq b$.
This argument yields the following observation.

\begin{theorem}\label{T:MonCev}
Let~$D$ be a \dzlat\ and suppose that there exists a surjective lattice homomorphism $f\colon\Csc{G}\twoheadrightarrow D$ for some \lgrp~$G$.
If~$D$ is finitely separable, then it is monotone-Cevian.
\end{theorem}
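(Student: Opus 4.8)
The plan is to combine the Cevian deviation on $\Csc{G}$ with the monotone-adjustment machinery of Section~\ref{S:MonAlt}, pushed through the surjection~$f$. First I would fix a positive generator for each member of $\Csc{G}$, obtaining a Cevian deviation $\sd$ on $\Csc{G}$ as recalled just above the statement; since $\sd$ need not be monotone, the idea is \emph{not} to adjust it directly but rather to transport the problem to~$D$. Set $d_0\colon D\times D\to D$ by choosing, for each pair $(a,b)\in D\times D$, preimages and letting $d_0(a,b)\eqdef f(\seq{a_0}\sd\seq{b_0})$ for some fixed choice of $\seq{a_0},\seq{b_0}\in\Csc{G}$ with $f(\seq{a_0})=a$, $f(\seq{b_0})=b$; because $f$ is a surjective lattice homomorphism and the defining inequalities $x\leq y\vee(x\sd y)$, $(x\sd y)\wedge(y\sd x)=0$ and the Cevian inequality are all equational/inequational in $(\vee,\wedge)$, they are preserved by~$f$, so $d_0$ is a Cevian deviation on~$D$ (the Cevian inequality survives because $f$ is a join-homomorphism).

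Next I would invoke finite separability of~$D$: by Proposition~\ref{P:EquivFSWRC}, $D$ carries a \fsh\ well-ordering~$\ci$. Apply the monotone adjustment construction of Proposition~\ref{P:ConstrMonAlt} to $d_0$ with respect to~$\ci$, obtaining a monotone map $d_0'\colon D\times D\to D$. By Proposition~\ref{P:MonAlt2Iso} (with $M=D$, $f=\id_D$) the relation $x\leq y\vee d_0'(x,y)$ holds, and by Proposition~\ref{P:MonAlt2Norm} (every distributive $0$-lattice is $0$-distributive) we get $d_0'(x,y)\wedge d_0'(y,x)=0$; hence $d_0'$ is a monotone deviation on~$D$. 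This is exactly the content already packaged as Theorem~\ref{T:DevFS}, so that part is free.

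The remaining, and genuinely new, point is that the monotone adjustment of a \emph{Cevian} deviation is again Cevian. I would prove this by $\unlhd$-induction on a suitable ``complexity'' of the triple $(x,y,z)$ — more precisely, arguing by induction with respect to the $\ci$-positions of the three arguments, mirroring the structure of Propositions~\ref{P:MonAlt2Iso} and~\ref{P:MonAlt2Norm}. One fixes $a,b,c$ and, using that $d_0'(a,c)=d_{0\land}'(a,c)\vee d_{0\lor}'(a,c)$ with both constituents finitary, reduces (by distributivity, to handle $d_{0\land}'$, and by the join structure, to handle $d_{0\lor}'$) to inequalities of the form $d_0'(x,y)\leq d_0'(x',y')\vee d_0'(y',y)$-type relations among arguments that are $\unlhd$-smaller than $\set{a,c}$, where the induction hypothesis (plus the original Cevian inequality for $d_0$ at the base) applies. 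I expect this bookkeeping — isolating which pairs among $\set{a,b}$, $\set{b,c}$, $\set{a,c}$ and their shadow-refinements are $\unlhd$-below which, and checking the four combinations $d_{0\land}'/d_{0\lor}'$ against each other — to be the main obstacle; it is entirely analogous in flavour to the four-case split in the proof of Proposition~\ref{P:MonAlt2Norm}, but the ternary nature of the Cevian inequality makes the combinatorics heavier. Once that lemma is in hand, $d_0'$ is a monotone Cevian deviation on~$D$, so $D$ is monotone-Cevian, completing the proof.
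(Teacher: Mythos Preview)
Your approach diverges from the paper's, and the divergence sits precisely at your acknowledged ``main obstacle'': you need the monotone adjustment of a Cevian deviation to remain Cevian, and you have not proved this. The paper establishes no such preservation result --- Theorem~\ref{T:DevFS} only produces a \emph{monotone} deviation, and the authors explicitly list as an open problem whether every finitely separable \cn\ lattice is Cevian, signalling that upgrading adjusted deviations to Cevian ones is open territory even for them. Your sketched bookkeeping is optimistic: the $d'_{\land}$ component can drop strictly below~$d$, so the right-hand side $d_0'(a,b)\vee d_0'(b,c)$ may fall below $d_0(a,b)\vee d_0(b,c)$, while $d'_{\lor}(a,c)$ can rise above $d_0(a,c)$; and a ternary induction has no obvious well-founded ordering compatible with the recursive definition of~$d_0'$ at all three pairs $\set{a,b}$, $\set{b,c}$, $\set{a,c}$ simultaneously (the third element~$b$ is unconstrained by $\set{a,c}$). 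So as written this is a genuine gap, not merely tedious bookkeeping.

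The paper's proof bypasses the adjustment machinery of Section~\ref{S:MonAlt} entirely and is three lines long. It composes~$f$ with the Belluce map $\gb\colon G^+\twoheadrightarrow\Csc{G}$, $x\mapsto\seq{x}$, which is a surjective \emph{lattice} homomorphism; since~$D$ is finitely separable, Freese and Nation's Theorem~\ref{T:FreNat2015} yields an \emph{isotone} section $\gs\colon D\hookrightarrow G^+$ of $f\circ\gb$. One then sets $x\sd y\eqdef f\seq{(\gs(x)-\gs(y))^+}$ directly. Monotonicity is immediate from the isotonicity of~$\gs$ together with the evident monotonicity of $(a,b)\mapsto(a-b)^+$ in~$G^+$; the deviation and Cevian identities follow from the \lgrp\ inequalities $a\leq b+(a-b)^+$, $(a-b)^+\wedge(b-a)^+=0$, and $(a-c)^+\leq(a-b)^++(b-c)^+$, pushed through the homomorphism $f\circ\gb$. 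No induction, no adjustment. The moral: rather than fixing an arbitrary section and then repairing monotonicity afterwards, use finite separability (via Theorem~\ref{T:FreNat2015}) to choose the section isotone from the start.
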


\begin{note}
Since every homomorphic image of a \cn\ lattice is \cn, our assumptions imply that~$D$ is \cn.
\end{note}

\begin{proof}
The map $\gb\colon G^+\twoheadrightarrow\Csc{G}$, $x\mapsto\seq{x}$ ($\gb$ is sometimes called the \emph{Belluce map}), is a surjective lattice homomorphism (cf. Bigard \emph{et al.} \cite[Proposition~2.2.11]{BKW}).
Since~$D$ is finitely separable, by Theorem~\ref{T:FreNat2015} the composite map $f\circ\gb$ has an isotone section $\gs\colon D\hookrightarrow\nobreak G^+$.
The assignment $(x,y)\mapsto f\seq{(\gs(x)-\gs(y))^+}$ defines a monotone-Cevian operation on~$D$.
\end{proof}

\begin{remark}\label{Rk:MonCev}
Theorem~\ref{T:MonCev} trivially extends to finitely separable homomorphic images of \lidl\ lattices of vector lattices.
Indeed, for any vector lattice~$E$ over a totally ordered division ring~$\kk$, with underlying  Abelian \lgrp~$G$, the assignment $\seq{x}\mapsto\kk\cdot\seq{x}$ defines a surjective lattice homomorphism $\Idc{G}\twoheadrightarrow\Idc{E}$ (which is an isomorphism if~$\kk$ is Archimedean).
\end{remark}

The main result of Plo\v{s}\v{c}ica and Wehrung~\cite{MV2} states that \emph{every \cn\ lattice with at most~$\aleph_1$ elements is a homomorphic image of~$\Idc{G}$ for some  Abelian \lgrp~$G$}.
Hence, we get the following variant of Plo\v{s}\v{c}ica \cite[Theorem~3.2]{Plo21}, obtained by strengthening there both the assumption (the lattice is now assumed to be finitely separable) and the conclusion (``Cevian'' becomes ``monotone-Cevian'').

\begin{corollary}\label{C:MonCev2}
Every finitely separable \cn\ lattice with at most~$\aleph_1$ elements is monotone-Cevian.
\end{corollary}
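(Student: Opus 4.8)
The plan is to combine the two headline results already established in the excerpt. Corollary~\ref{C:MonCev2} asserts that every finitely separable \cn\ lattice with at most~$\aleph_1$ elements is monotone-Cevian, so I need only identify the hypotheses of Theorem~\ref{T:MonCev} in this setting and feed them to that theorem.

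First I would invoke the main result of Plo\v{s}\v{c}ica and Wehrung~\cite{MV2}, quoted in the paragraph immediately preceding the corollary: every \cn\ lattice~$D$ with at most~$\aleph_1$ elements is a homomorphic image of~$\Idc{G}=\Csc{G}$ for some Abelian \lgrp~$G$. Thus, given a finitely separable \cn\ lattice~$D$ with $|D|\leq\aleph_1$, there exists a surjective lattice homomorphism $f\colon\Csc{G}\twoheadrightarrow D$. That is exactly the hypothesis of Theorem~\ref{T:MonCev}, whose conclusion --- under the additional assumption that~$D$ is finitely separable, which we have --- is that~$D$ is monotone-Cevian. So the proof is essentially two sentences.

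There is no real obstacle here, since all the work has been done earlier: the existence of the isotone section of $f\circ\gb$ (where $\gb$ is the Belluce map $G^+\twoheadrightarrow\Csc{G}$) comes from Theorem~\ref{T:FreNat2015} applied to the finitely separable lattice~$D$, and the resulting operation $(x,y)\mapsto f\seq{(\gs(x)-\gs(y))^+}$ is monotone-Cevian by the argument in the proof of Theorem~\ref{T:MonCev}. The only point worth a remark is why~$D$ is \cn\ in the first place --- but that is immediate since every homomorphic image of a \cn\ lattice is \cn, as noted after Theorem~\ref{T:MonCev}; alternatively it is simply part of our hypothesis. Here is the proof.

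\begin{proof}
Let~$D$ be a finitely separable \cn\ lattice with at most~$\aleph_1$ elements.
By the main result of Plo\v{s}\v{c}ica and Wehrung~\cite{MV2}, there are an Abelian \lgrp~$G$ and a surjective lattice homomorphism $f\colon\Idc{G}\twoheadrightarrow D$; since $\Idc{G}=\Csc{G}$, Theorem~\ref{T:MonCev} applies and yields that~$D$ is monotone-Cevian.
\end{proof}
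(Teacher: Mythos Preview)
Your proof is correct and follows exactly the paper's own approach: invoke~\cite{MV2} to realize~$D$ as a homomorphic image of~$\Idc{G}$ for some Abelian \lgrp~$G$, then apply Theorem~\ref{T:MonCev}.
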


By Theorem~\ref{T:MonCev}, if $\Idc G$ is finitely separable, then it is monotone-Cevian.
In the sequel we shall verify that this is the case for free Abelian \lgrp{s}.
We find it convenient to work in the slightly more general setting of vector lattices.
Let~$\kk$ be a countable totally ordered division ring and let  $E=\kkp{I}$ be the left vector space over $\kk$ with basis $I$.
For $J\subseteq I$ we identify~$\kkp{J}$ with its canonical copy in $E$.
For every  $a\in E$ we set
 \begin{equation*}
 [a]\eqdef\setm{x\in E}{\scp{a}{x}>0}\,, \end{equation*}
(where $(a\mid x)\eqdef\sum_{i\in I}a_ix_i$).
Following Wehrung \cite{MV1,RAlg}, we  denote by~$\Ops\kkp{J}$ the $0$-sublattice of the powerset of~$E$ generated by $\setm{[x]}{x\in\kkp{J}}$.
In other words, $\Ops\kkp{J}$ is the lattice of
all subsets of $E$ defined as disjunctions of conjunctions of strict  linear inequalities with variables in $J$.
Further, let $\Op\kkp{J}=\Ops\kkp{J}\cup\set{E}$.
(Notice that $E\notin\Ops\kkp{J}$, so we are adding a new top element.)
With the above conventions, $\Op\kkp{J}$ is thus identified with a $0$-sublattice of $\Op\kkp{I}$.

\begin{lemma}\label{shad}
For every $X\in\fin{I}$, the lattice $\Op\kkp{X}$ is a \fsh\ subset of  $\Op E$.
\end{lemma}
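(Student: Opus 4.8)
The plan is to show that every element $Y\in\Op E$ has a finite lower shadow and a finite upper shadow on $\Op\kkp{X}$, using the algebraic description of these lattices together with the finite-dimensionality of $\kkp{X}$. Recall that $Y$ is a finite union of finite intersections of half-spaces $[a]$ with $a\in E=\kkp{I}$; since~$I$ may be infinite, each such $a$ involves only finitely many coordinates, so altogether $Y$ is defined by finitely many strict linear inequalities in finitely many variables, say with support contained in some finite $Z\in\fin{I}$ with $X\subseteq Z$ (we may enlarge $Z$ freely). Thus it suffices to work inside the finite-dimensional space $\kkr{Z}$ and to understand the inclusion relation between a general element of $\Op\kkp{Z}$ and the elements of $\Op\kkp{X}$.

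The key observation is a \emph{quantifier-elimination / projection} phenomenon. Write $\kkr{Z}=\kkr{X}\oplus\kkr{Z\setminus X}$ and let $\pi\colon\kkr{Z}\to\kkr{X}$ be the coordinate projection; note $\pi$ is surjective and maps the defining inequalities appropriately, so that for $Y\in\Op\kkp{Z}$ the image $\pi[Y]$ is again a finite union of finite intersections of open half-spaces of $\kkr{X}$, i.e.\ $\pi[Y]\in\Op\kkp{X}$ (this is just the statement that the class of finite unions of finite intersections of strict linear inequalities is closed under coordinate projection — a Fourier–Motzkin type argument valid over any totally ordered division ring). I claim $\Min(\Op\kkp{X}\cap\us{\subseteq}{Y})=\set{\pi^{-1}\pi[Y]}$ up to the obvious identifications: indeed, if $W\in\Op\kkp{X}$ and $Y\subseteq W$ inside $E$, then $W$, being defined only by inequalities in the variables $X$, is a union of fibers of $\pi$, so $Y\subseteq W$ forces $\pi[Y]\subseteq\pi[W]$ within $\kkr X$ and hence $Y\subseteq\pi^{-1}\pi[Y]\subseteq W$. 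Since $\pi^{-1}\pi[Y]$, viewed back in $\Op\kkp{X}\subseteq\Op E$ via the canonical copy, is itself of the required form and lies above $Y$, it is the (unique) smallest upper bound of $Y$ in $\Op\kkp{X}$, giving a finite — in fact singleton — upper shadow. Dually, using that the complement operation interacts well with projections, or by applying the same argument to the complement of $Y$ (a finite union of finite intersections of \emph{closed} half-spaces, but the projection argument still applies after a routine adjustment, or one can instead use the largest element of $\Op\kkp X$ contained in $Y$, which is the interior-in-$\kkr X$-of-$\pi$-saturation), one obtains a finite lower shadow of $Y$ on $\Op\kkp{X}$.

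I expect the main obstacle to be the precise bookkeeping in the projection step: verifying that the image (resp.\ a suitable preimage-type construction) of a member of $\Op\kkp{Z}$ under the coordinate projection genuinely lands back in the lattice $\Op\kkp{X}$, and handling the asymmetry between strict inequalities (open half-spaces) and their complements. This is essentially Fourier–Motzkin elimination, which is classical over $\RR$; one must check it goes through over an arbitrary countable totally ordered division ring~$\kk$, but since the elimination only uses the ordered-field-like arithmetic of $\kk$ applied to finitely many inequalities, no completeness or Archimedean hypothesis is needed. A secondary, purely notational, point is that $E=\kkp{I}$ is the \emph{direct sum}, not the product, so all vectors have finite support; this is actually what makes the reduction to finite $Z$ immediate, but it should be stated to justify that "$Y$ is defined by finitely many inequalities in finitely many variables" in the first place. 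Once the projection lemma is in hand, the shadows are even singletons, so the conclusion that $\Op\kkp{X}$ is \fsh\ in $\Op E$ — indeed with shadows as small as possible — follows at once.
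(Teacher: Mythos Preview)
Your proposal is correct and follows essentially the same approach as the paper: both show that the upper and lower shadows of any $U\in\Op E$ on $\Op\kkp{X}$ are singletons, obtained by saturating (resp.\ co-saturating) $U$ along the fibers of the projection onto the $X$-coordinates, and both invoke quantifier elimination---you call it Fourier--Motzkin, the paper cites van den Dries's quantifier elimination for nontrivial ordered $\kk$-vector spaces---to verify that the resulting sets lie in $\Op\kkp{X}$. The paper's $U^*=\{v:v\res_X=u\res_X\text{ for some }u\in U\}$ is exactly your $\pi^{-1}\pi[Y]$, and its $U_*=\{v:u\in U\text{ whenever }v\res_X=u\res_X\}$ is the largest $\pi$-saturated subset of $U$, which is what your somewhat looser lower-shadow discussion is reaching for.
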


 \begin{proof}
(See also the proof of Plo\v{s}\v{c}ica and Wehrung~\cite[Lemma~12]{MV2}.)
We claim that the upper and lower shadow of any $U\in\Op E$ are, respectively, $\set{U^*}$ and~$\set{U_*}$, where
\begin{align*}
U^*&\eqdef\setm{v\in E}{v\res_X=u\res_X\ \text{for some}\ u\in U}\,,\\
U_*&\eqdef\setm{v\in E}{u\in U\ \text{whenever}\  v\res_X=u\res_X}\,.
\end{align*}
First, the above defined sets are both elements of $\Op\kkp{X}$:
this follows from quantifier elimination for the theory of all nontrivial totally ordered $\kk$-vector spaces (cf. van den Dries \cite[Corollary~I.7.8]{vdD1998}).

Clearly, $U_*\subseteq U\subseteq U^*$. Now let $Z\in\Op\kkp{X}$, $U\subseteq Z$.
We verify that $U^*\subseteq Z$.
It suffices to consider the case when
$Z=\bigcup_{a\in M}[a]$ for a finite $M\subseteq\kkp{X}$.
Now, if $v\in U^*$,
then $v\res_X=u\res_X$ for some $u\in U\subseteq Z$.
Hence, $\scp{a}{u}>0$ for some $a\in M$.
Since $a_i=0$ for $i\notin X$, we also have $\scp{a}{v}=\scp{a}{u}>0$, hence $v\in Z$.

Similarly, let $Z\in\Op\kkp{X})$, $Z\subseteq U$.
We verify that $Z\subseteq U_*$. It suffices to consider the case
when $Z=\bigcap_{a\in M}[a]$ for a finite $M\subseteq\kkp{X}$. Now, let
$v\in Z\subseteq U$ and $u\in E$ with $v\res_X=u\res_X$. Then $\scp{a}{v}=\scp{a}{u}>0$ for every $a\in M$,
hence $u\in U$. By the definition, $v\in U_*$.
\end{proof}

\begin{lemma}\label{interpol}
Let  $X,Y\in\fin{I}$, $U\in\Op\kkp{X}$, $V\in\Op\kkp{Y}$, and $U\subseteq V$. Then $U\subseteq W\subseteq V$ for some $W\in\Op\kkp{X\cap Y}$.
\end{lemma}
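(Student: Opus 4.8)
The plan is to take for $W$ the upper shadow of $U$ on the $0$-sublattice $\Op\kkp{X\cap Y}$ of $\Op E$. Explicitly, set
\[
W\eqdef\setm{v\in E}{v\res_{X\cap Y}=u\res_{X\cap Y}\ \text{for some}\ u\in U}\,.
\]
Since $X\cap Y\in\fin{I}$, the proof of Lemma~\ref{shad}, applied with $X\cap Y$ in place of $X$, shows that $W$ is exactly the upper shadow of $U$ (viewed as an element of $\Op E$) on $\Op\kkp{X\cap Y}$; in particular $W\in\Op\kkp{X\cap Y}$, and taking each $u\in U$ as its own witness gives $U\subseteq W$. So only the inclusion $W\subseteq V$ remains to be checked.

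Here is where the one real idea enters. First record the elementary fact, already used inside the proof of Lemma~\ref{shad}, that for $J\in\fin{I}$ and $Z\in\Op\kkp{J}$ one has $v\in Z\iff w\in Z$ whenever $v\res_J=w\res_J$: each generator $[a]$ with $a\in\kkp{J}$ satisfies $\scp{a}{v}=\sum_{i\in J}a_iv_i$, depending only on $v\res_J$, and this property passes to finite unions, finite intersections, and to $E$. Now let $v\in W$ and pick $u\in U$ with $u\res_{X\cap Y}=v\res_{X\cap Y}$. Define $u'\in E$ by $u'\res_X=u\res_X$ and $u'_i=v_i$ for $i\in I\sd X$ (a vector of finite support, since $u$ and $v$ both have). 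As $u'\res_X=u\res_X$ and $U\in\Op\kkp{X}$, we get $u'\in U\subseteq V$. On the other hand $u'\res_Y=v\res_Y$: for $i\in Y\cap X=X\cap Y$ we have $u'_i=u_i=v_i$, and for $i\in Y\sd X$ we have $u'_i=v_i$. Since $V\in\Op\kkp{Y}$, this forces $v\in V$. Hence $W\subseteq V$, as desired.

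I do not expect a deep obstacle here: the crux is simply hitting on the right $W$ — the projection of $U$ onto the shared coordinates $X\cap Y$, cylindrified back up to $E$ — after which the verification is pure coordinate bookkeeping, exploiting that $U$ imposes no constraint on the coordinates in $I\sd X$, so those can be freely reset to agree with $v$ on $Y\sd X$. Degenerate cases are harmless: if $X\cap Y=\es$ then $W\in\set{\es,E}$, and if $U=E$ then $U\subseteq V$ already forces $V=E$; in all cases $W\in\Op\kkp{X\cap Y}$. (One could equally well take $W$ to be the lower shadow of $V$ on $\Op\kkp{X\cap Y}$, with the dual verification.)
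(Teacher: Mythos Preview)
Your proof is correct and is essentially the paper's own argument. The paper takes $W=U^*$, the upper shadow of~$U$ on $\Op\kkp{Y}$, and observes that since quantifier elimination adds no new variables, $U^*$ actually lies in $\Op\kkp{X\cap Y}$; your~$W$ (the shadow on $\Op\kkp{X\cap Y}$) is literally the same set, and your explicit coordinate verification of $W\subseteq V$ simply unpacks the shadow inequality $U^*\subseteq V$ that the paper cites from Lemma~\ref{shad}.
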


\begin{proof}
Since quantifier elimination does not add
new variables, the upper shadow $U^* $ on $\kkp{Y}$ belongs to $\kkp{X\cap Y}$.
From $U\subseteq V$ we obtain $U^*\subseteq V$, so we can set $W=U^*$.
\end{proof}

\begin{proposition}\label{P:OpEFS}
The \dzlat{s}~$\Op{E}$ and~$\Ops{E}$ are both finitely separable.
\end{proposition}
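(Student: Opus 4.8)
The plan is to realize $\Op E$ as a lower finite strong amalgam of finite posets, and then invoke Proposition~\ref{P:StrAmFS}\eqref{LFFS}. The indexing poset will be $P\eqdef\fin{I}$, ordered by inclusion, which is clearly lower finite (indeed, $\ds{\subseteq}{X}$ has exactly $2^{|X|}$ elements). For each $X\in\fin{I}$ we take $M_X\eqdef\Op\kkp{X}$. We have already checked the three defining conditions of a strong amalgam: condition~\eqref{MuuMp}, namely $\Op E=\bigcup_{X\in\fin{I}}\Op\kkp{X}$, holds because every element of $\Op E$ is a finite Boolean combination of half-spaces $[x]$ and each such $x\in\kkp{I}$ has finite support; condition~\eqref{MpIsot}, that $\Op\kkp{X}$ is a \fsh\ subset of $\Op\kkp{Y}$ whenever $X\subseteq Y$, is Lemma~\ref{shad} applied inside $\Op\kkp{Y}$ in place of $\Op E$ (the proof is verbatim the same, since $\kkp{Y}$ plays the role of the ambient space); and the Interpolation Property~\eqref{IP} is exactly Lemma~\ref{interpol}. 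Since each $\Op\kkp{X}$ is finite (it is the $0$-sublattice of a finite Boolean algebra, as there are only finitely many regions cut out by half-spaces through the finitely many coordinate directions in $X$ — more precisely it embeds into the finite lattice of regions of the hyperplane arrangement, or one simply notes $\Op\kkp{X}$ is finite because $\kkp{X}$ has finitely many ``sign patterns''), Proposition~\ref{P:StrAmFS}\eqref{LFFS} gives that $\Op E$ is finitely separable.

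It remains to handle $\Ops E$. Here I would note that $\Op E=\Ops E\sqcup\set{E}$, i.e.\ $\Op E$ is obtained from $\Ops E$ by adjoining a new top element (since $E\notin\Ops E$). By Proposition~\ref{P:BasicFS}\eqref{bdsFS}, $\Ops E$ is finitely separable if{f} $\Op E$ is. Hence the finite separability of $\Op E$ just established immediately yields that of $\Ops E$, which completes the proof.

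The one point that needs a word of care — and is the main (minor) obstacle — is the finiteness of each $\Op\kkp{X}$, since the paper's earlier lemmas are stated so as to use only that $\Op\kkp{X}$ is a \fsh\ \emph{subset}, not a finite one. For $X$ finite, the map $E\to\kk^X$, $v\mapsto v\res_X$, identifies subsets of $\kkp{X}$ definable by strict linear inequalities with $X$-variables with corresponding subsets of $\kk^X$; the finitely many functionals $a\in\kkp{X}$ relevant to a given element of $\Op\kkp{X}$ partition $\kk^X$ into finitely many relatively open polyhedral cells, and every element of $\Op\kkp{X}$ is a union of such cells for a suitable finite functional set. One checks that, up to the equivalence of defining which cells are included, there are only finitely many such unions; equivalently, $\Op\kkp{X}$ is (isomorphic to) a sublattice of the finite lattice of faces/regions of a hyperplane arrangement in $\kk^X$, hence finite. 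Alternatively, and most cheaply, $\Op\kkp{X}$ is a \fsh\ subset of itself, so it is a quotient-free witness; but to apply Proposition~\ref{P:StrAmFS}\eqref{LFFS} we genuinely need each block finitely separable, and finiteness is the quickest route. With that observation in hand, all three strong-amalgam axioms are already proved in Lemmas~\ref{shad} and~\ref{interpol}, and the argument closes.

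\begin{proof}
Set $P\eqdef\fin{I}$, ordered under inclusion; this is a lower finite poset, indeed a sublattice of $\fin{I}$ in the obvious sense.
For $X\in P$ put $M_X\eqdef\Op\kkp{X}$, viewed as a $0$-sublattice of $\Op E$.
We claim that $\Op E$ is the lower finite strong amalgam of the family $\vecm{M_X}{X\in P}$.

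Condition \ref{D:StrAmalg}\eqref{MuuMp}: every $U\in\Op E$ is a finite join of finite meets of half-spaces $[x]$ with $x\in\kkp{I}$, and each such $x$ has finite support, so $U\in\Op\kkp{X}$ for $X$ the union of those supports; hence $\Op E=\bigcup_{X\in P}M_X$.

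Condition \ref{D:StrAmalg}\eqref{MpIsot}: let $X\subseteq Y$ in $P$.
Running the proof of Lemma~\ref{shad} with $\kkp{Y}$ in place of the ambient space $E$ shows that for every $U\in\Op\kkp{Y}$, the sets
$U^*\eqdef\setm{v\in\kkp{Y}}{v\res_X=u\res_X\text{ for some }u\in U}$
and
$U_*\eqdef\setm{v\in\kkp{Y}}{u\in U\text{ whenever }v\res_X=u\res_X}$
lie in $\Op\kkp{X}$ and are, respectively, an upper shadow and a lower shadow of $U$ on $M_X$; each is a singleton shadow, hence finite.
So $M_X$ is a \fsh\ subset of $M_Y$.

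Condition \ref{D:StrAmalg}\eqref{IP} (Interpolation Property) is exactly the content of Lemma~\ref{interpol}: if $X,Y\in P$, $U\in M_X$, $V\in M_Y$, and $U\subseteq V$, then $U\subseteq W\subseteq V$ for some $W\in\Op\kkp{X\cap Y}=M_{X\cap Y}$, and $X\cap Y\subseteq X,Y$.

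Finally, each $M_X=\Op\kkp{X}$ is finite.
Indeed, via the restriction map $v\mapsto v\res_X$ we may work in $\kk^X$; the elements of $\Op\kkp{X}$ are finite unions of finite intersections of open half-spaces $\setm{v\in\kk^X}{(a\mid v)>0}$ with $a\in\kk^X$, and any such element is a union of relatively open cells of the (finite) arrangement determined by the finitely many functionals that occur in a fixed representation of it.
Up to the choice of which cells are included, there are only finitely many such unions obtainable from arrangements of functionals from $\kk^X$; equivalently $\Op\kkp{X}$ embeds into the finite lattice of regions of a hyperplane arrangement in $\kk^X$.
Hence $M_X$ is finite, thus finitely separable by Proposition~\ref{P:BasicFS}\eqref{ctbleFS}.

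By Proposition~\ref{P:StrAmFS}\eqref{LFFS}, $\Op E$ is finitely separable.
Since $E\notin\Ops E$, the lattice $\Op E$ is obtained from $\Ops E$ by adding a new top element, so by Proposition~\ref{P:BasicFS}\eqref{bdsFS}, $\Ops E$ is finitely separable as well.
\end{proof}
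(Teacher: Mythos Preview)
Your overall strategy---realizing $\Op E$ as a lower finite strong amalgam of the $\Op\kkp{X}$ over $\fin{I}$ and then invoking Proposition~\ref{P:StrAmFS}---is exactly the paper's approach, and your verification of the three amalgam conditions via Lemmas~\ref{shad} and~\ref{interpol} is fine.

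However, there is a genuine error: your claim that each $\Op\kkp{X}$ is \emph{finite} is false whenever $|X|\geq 2$. The generating set of $\Op\kkp{X}$ consists of the half-spaces $[a]$ for \emph{all} $a\in\kkp{X}$, not merely the coordinate functionals; since~$\kk$ is an infinite division ring, there are infinitely many pairwise non-proportional $a\in\kkp{X}$ as soon as $|X|\geq 2$, hence infinitely many distinct half-spaces, and $\Op\kkp{X}$ is infinite. Your hyperplane-arrangement argument conflates the finite arrangement needed to express a \emph{single} element of $\Op\kkp{X}$ with the totality of arrangements arising across all elements; there is no single finite arrangement whose region lattice contains the whole of $\Op\kkp{X}$.

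The repair is precisely what the paper does and what you overlooked: use the standing hypothesis that~$\kk$ is \emph{countable}. Then $\kkp{X}$ is countable, so $\Op\kkp{X}$, being generated as a $0$-sublattice by the countable set $\setm{[a]}{a\in\kkp{X}}$, is countable, and Proposition~\ref{P:BasicFS}\eqref{ctbleFS} gives finite separability of each block. With that one-line fix your argument coincides with the paper's. (Your final step via Proposition~\ref{P:BasicFS}\eqref{bdsFS} is fine; the paper uses~\eqref{ConvFS} instead, but either applies.)
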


\begin{proof}
By Lemmas \ref{shad} and \ref{interpol},  $\Op\kkp{I}$ is the strong amalgam (cf. Definition~\ref{D:StrAmalg}) of all $\Op\kkp{X}$ for $X\in\fin{I}$.
Since~$\kk$ is countable, so are all $\Op\kkp{X}$, which are therefore finitely separable (cf. Proposition~\ref{P:BasicFS}\eqref{ctbleFS}).
By Proposition~\ref{P:StrAmFS}, $\Op\kkp{I}$ is finitely separable.
By Proposition~\ref{P:BasicFS}\eqref{ConvFS}, so is thus $\Ops\kkp{I}=(\Op\kkp{I})\setminus\set{E}$.
\end{proof}

Note that by the Baker-Bernau-Madden duality (cf. Baker~\cite{Baker1968}, Bernau~\cite{Bern1969}, Madden \cite[Ch.~III]{MaddTh}, and also Wehrung \cite[page~13]{RAlg} for a summary), $\Ops\kkp{I}$ is isomorphic to the lattice of all principal \lidl{s} of the free $\kk$-vector lattice on~$I$.
Hence, by applying Theorem~\ref{T:MonCev} and Remark~\ref{Rk:MonCev}, we obtain the following.

\begin{corollary}\label{C:OpEFS}
Let~$E$ be a left vector space over a countable totally ordered division ring~$\kk$.
Then the \dzlat{s}~$\Op{E}$ and~$\Ops{E}$ are both monotone-Cevian.
\end{corollary}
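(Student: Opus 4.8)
The plan is to derive both assertions from Theorem~\ref{T:MonCev}, in the extended form of Remark~\ref{Rk:MonCev}; the only additional ingredients are the Baker--Bernau--Madden duality already invoked above and, for $\Op E$, a device for adjoining a top element. Throughout, fix a basis~$I$ of~$E$, so that $\Ops E=\Ops{\kkp I}$ and $\Op E=\Op{\kkp I}$. By Proposition~\ref{P:OpEFS}, both $\Op E$ and $\Ops E$ are finitely separable; this supplies half of the hypothesis of Theorem~\ref{T:MonCev} in each case, and what remains is to present each of them as a surjective homomorphic image of the principal \lidl\ lattice of an Abelian \lgrp\ (equivalently, of a vector lattice).

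For $\Ops E$ this is immediate from the duality: it identifies $\Ops{\kkp I}$ with the lattice $\Idc F$ of all principal \lidl{s} of the free $\kk$-vector lattice~$F$ on~$I$. By Remark~\ref{Rk:MonCev}, $\Idc F$ is a surjective lattice-homomorphic image of $\Idc G$, where~$G$ is the underlying (Abelian) \lgrp\ of~$F$, and $\Idc G=\Csc G$ since~$G$ is Abelian. As $\Ops E\cong\Idc F$ is finitely separable, Theorem~\ref{T:MonCev} applies and furnishes a monotone-Cevian deviation on $\Ops E$.

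For $\Op E$ the issue is that it is $\Ops E$ with a new top element adjoined --- namely~$E$ itself, which never lies in~$\Ops{\kkp I}$. I would handle this by realizing $\Op E$ again as a principal \lidl\ lattice of a vector lattice. Let~$F'$ be the lexicographic product of the totally ordered division ring~$\kk$ (placed as the dominant factor) with~$F$: this is again a $\kk$-vector lattice, and its element $(1,0)$ is a strong order unit of~$F'$ in the vector-lattice sense. The one point deserving care here is that ``strong unit'' now means domination by scalars from~$\kk^+$, not by integers, so the argument goes through even when~$\kk$ is non-Archimedean. A direct inspection then shows that the principal \lidl{s} of~$F'$ are exactly the sets $\set{0}\times J$ for $J\in\Idc F$, together with~$F'$ itself; hence $\Idc{F'}$ is order-isomorphic to $\Idc F$ with a new top, that is, to $\Op E$. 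Since $\Op E$ is finitely separable and, through this isomorphism, a homomorphic image of $\Idc{G'}=\Csc{G'}$ for the underlying \lgrp~$G'$ of~$F'$, Theorem~\ref{T:MonCev} once more yields a monotone-Cevian deviation, completing the proof.

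A cleaner alternative for the last step avoids~$F'$ altogether: one checks by a routine case analysis that adjoining a new top~$1$ to a monotone-Cevian $0$-lattice~$L$ preserves the property, by extending a monotone-Cevian deviation~$\sd$ of~$L$ through $1\sd y\eqdef1$ for $y\in L$ and $x\sd1\eqdef0$ for every~$x$. Applying this to $L=\Ops E$ settles $\Op E$. I expect the main (indeed the only genuine) obstacle to be precisely this passage from $\Ops E$ to $\Op E$ --- namely, making the adjunction of the top compatible with the monotone-Cevian structure, whether via the lexicographic realization or via the direct extension of the deviation; every other step merely assembles results already established.
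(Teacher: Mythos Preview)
Your argument is correct and, for $\Ops E$, identical to the paper's: Proposition~\ref{P:OpEFS} supplies finite separability, Baker--Bernau--Madden identifies $\Ops E$ with $\Idc F$ for the free $\kk$-vector lattice~$F$ on~$I$, and Theorem~\ref{T:MonCev} together with Remark~\ref{Rk:MonCev} finishes.

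Where you differ is in the treatment of $\Op E$. The paper is terse here --- it simply writes ``Hence, by applying Theorem~\ref{T:MonCev} and Remark~\ref{Rk:MonCev}, we obtain the following'' and states the corollary for both lattices, without spelling out how the added top element~$E$ is accommodated. You have filled this in with two legitimate routes: realizing $\Op E$ as $\Idc F'$ for the lexicographic product $F'=\kk\overrightarrow{\times}F$ (so that Theorem~\ref{T:MonCev} applies directly), or extending a monotone-Cevian deviation on~$\Ops E$ to $\Op E$ by setting $1\sd y\eqdef1$ for $y\neq1$ and $x\sd1\eqdef0$. Both are sound (the case check for the second is routine and your description of the principal \lidl{s} of~$F'$ is accurate). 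So your proof is not merely aligned with the paper's but slightly more complete on this point; the second alternative is indeed the cleaner of the two and requires no further vector-lattice constructions.
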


If we wanted only ``existence of a monotone deviation'' instead of ``monotone-Cevian'' in Corollary~\ref{C:OpEFS}, then it would be sufficient to use Theorem~\ref{T:DevFS} instead of Theorem~\ref{T:MonCev} (the latter itself following from Freese and Nation \cite[Theorem~1]{FreNat2015}).

\begin{corollary}\label{C:fvl}
For any set~$I$ and any countable totally ordered division ring~$\kk$, 
the lattice of all principal \lidl{s} of the free $\kk$-vector lattice on~$I$ is monotone-Cevian.
\end{corollary}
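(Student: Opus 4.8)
The plan is to recognize this corollary as an essentially immediate consequence of Corollary~\ref{C:OpEFS}, once the lattice of principal \lidl{s} of the free $\kk$-vector lattice on~$I$ is identified, up to isomorphism, with~$\Ops\kkp{I}$. The only genuine ingredients are the Baker-Bernau-Madden duality, which supplies that identification, and the observation that the property ``monotone-Cevian'' is an isomorphism invariant.

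First I would set $E\eqdef\kkp{I}$, the left $\kk$-vector space with basis~$I$, exactly as in the setup preceding Lemma~\ref{shad}. By the Baker-Bernau-Madden duality (cf. \cite{Baker1968,Bern1969}, \cite[Ch.~III]{MaddTh}, and the summary in \cite[page~13]{RAlg}), already invoked in the note following Proposition~\ref{P:OpEFS}, the lattice of all principal \lidl{s} of the free $\kk$-vector lattice on~$I$ is isomorphic to~$\Ops\kkp{I}=\Ops{E}$. Here~$I$ is allowed to be an arbitrary set: the construction of~$E$ and of~$\Ops{E}$ makes sense for any~$I$, and the countability hypothesis enters only through~$\kk$, as in Proposition~\ref{P:OpEFS}, where it guarantees that each~$\Op\kkp{X}$ is countable for finite $X\subseteq I$. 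Next I would apply Corollary~\ref{C:OpEFS} to~$E=\kkp{I}$, obtaining that~$\Ops{E}$ is monotone-Cevian, i.e.\ it carries a monotone-Cevian deviation~$\sd$.

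Finally I would transport~$\sd$ across the duality isomorphism. If $\varphi\colon\Ops{E}\to L$ is a lattice isomorphism onto the principal \lidl\ lattice~$L$ in question, then the operation $(p,q)\mapsto\varphi\pI{\varphi^{-1}(p)\sd\varphi^{-1}(q)}$ is a deviation on~$L$, and it inherits the two defining inequalities of a deviation together with left isotonicity, right antitonicity, and the Cevian inequality; for all of these conditions are expressed purely in terms of the lattice operations, $0$, and the order, all of which~$\varphi$ and~$\varphi^{-1}$ preserve. Since the substantive work has already been carried out in Proposition~\ref{P:OpEFS} and Corollary~\ref{C:OpEFS}, and the duality is a cited external theorem, there is no real obstacle here; the only point requiring (routine) care is the verification that monotone-Ceviannes is preserved under lattice isomorphism, which the transport argument above dispatches.
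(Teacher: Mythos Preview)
Your proposal is correct and matches the paper's approach: the paper also derives Corollary~\ref{C:fvl} immediately from Corollary~\ref{C:OpEFS} via the Baker-Bernau-Madden identification of~$\Ops\kkp{I}$ with the principal \lidl\ lattice of the free $\kk$-vector lattice on~$I$, leaving the routine transport of the monotone-Cevian deviation across the isomorphism implicit.
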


If $\kk=\QQ$ is the ordered field of rationals, then the free $\QQ$-vector lattice~$F$ on~$I$ is the divisible hull of the free Abelian \lgrp~$G$ on~$I$, thus the lattices of principal \lidl{s} of~$F$ and~$G$ are both isomorphic to~$\Ops\QQ^{(I)}$.

\begin{corollary}\label{C:flg}
For any set~$I$, 
the lattice of all principal \lidl{s} of the free  Abelian \lgrp\ on~$I$ is monotone-Cevian.
\end{corollary}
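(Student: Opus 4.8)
The plan is to deduce the statement from Corollary~\ref{C:fvl} applied to the countable totally ordered field $\kk=\QQ$. Write $G$ for the free Abelian \lgrp\ on~$I$ and $F$ for the free $\QQ$-vector lattice on~$I$. First I would recall the standard fact that $F$ is (isomorphic to) the divisible hull of~$G$: divisibilizing the free generators and invoking the relevant universal properties identifies $G$ with an \lsgrp\ of~$F$ such that every element of~$F$ has the form $\frac1n a$ for some $a\in G$ and some positive integer~$n$.

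Next I would identify the two principal \lidl\ lattices. For $a\in G$ one has $\seq{a}_F\cap G=\seq{a}_G$ and, since $\frac1n a$ and~$a$ generate the same convex \lsgrp\ of~$F$, every principal \lidl\ of~$F$ is of the form $\seq{a}_F$ with $a\in G$; moreover $\seq{a}_G\subseteq\seq{b}_G$ if{f} $a\le mb$ for some $m\in\go$ if{f} $\seq{a}_F\subseteq\seq{b}_F$. Hence $\seq{a}_G\mapsto\seq{a}_F$ is a lattice isomorphism $\Idc G\to\Idc F$ (this is the $\kk=\QQ$, hence Archimedean, case of the homomorphism of Remark~\ref{Rk:MonCev}). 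By the Baker--Bernau--Madden duality recalled before Corollary~\ref{C:OpEFS}, $\Idc F$ is in turn isomorphic to~$\Ops\QQ^{(I)}$, so $\Idc G\cong\Ops\QQ^{(I)}$ as well.

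Finally, since $\QQ$ is a countable totally ordered division ring, Corollary~\ref{C:fvl} (equivalently, Corollary~\ref{C:OpEFS} together with the displayed isomorphism) shows that $\Ops\QQ^{(I)}$ is monotone-Cevian, and therefore so is $\Idc G$. There is essentially no obstacle here: the substance has already been carried by Proposition~\ref{P:OpEFS} and Theorem~\ref{T:MonCev}, and the only point needing care is the routine verification that passing to the divisible hull induces an isomorphism of principal \lidl\ lattices, which reduces to the observation that convexity is insensitive to divisibilization.
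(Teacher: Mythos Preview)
Your proposal is correct and follows essentially the same route as the paper: take $\kk=\QQ$, use that the free $\QQ$-vector lattice on~$I$ is the divisible hull of the free Abelian \lgrp\ on~$I$, conclude that both principal \lidl\ lattices are isomorphic to~$\Ops\QQ^{(I)}$, and invoke Corollary~\ref{C:fvl}. You in fact supply more detail than the paper, which dispatches the matter in a single sentence; the only cosmetic point is that the equivalence $\seq{a}_G\subseteq\seq{b}_G\iff a\le mb$ should be stated for $a,b\in G^+$ (or with~$|a|,|b|$).
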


\section{An Archimedean vector lattice counterexample}\label{S:lgrpCX}

Since the lattice of all principal \lidl{s} of any countable Abelian \lgrp~$G$ is countable, thus finitely separable, the Belluce map of~$G$ has in that case an isotone section; hence~$\Idc{G}$ is monotone-Cevian (cf. Corollary~\ref{C:MonCev2}).
In this section we will verify that that observation does not extend to the uncountable case, by constructing an Archimedean vector lattice~$G$ with strong unit, of cardinality~$\aleph_1$, whose lattice of all principal \lidl{s} does not have any monotone deviation (thus, \emph{a fortiori}, no isotone section for the Belluce map; see Theorem~\ref{T:MonCev}).

We begin with a refinement of Shanin's classical $\gD$-Lemma (see for example Jech \cite[Theorem~9.18]{Jech2003}).

\begin{lemma}\label{L:kfree}
Let $\Phi\colon\go_1\to [\go_1]^{<\go}$ and let~$X$ be a cofinal subset of $\go_1$.
Then there are ascending $\go_1$-sequences $\vecm{\tau_i}{i<\go_1}$ and $\vecm{\ga_i}{i<\go_1}$ of ordinals in~$\go_1$ such that for every $k<\go_1$,
 \begin{enumerater}
 \item\label{gaktat}
 $\ga_k\in X$ and $\tau_k<\ga_k<\tau_{k+1}$;
 
 \item\label{tkcont}
 $\tau_k=\sup_{j<k}\tau_j$ if~$k$ is a limit ordinal;
 
 \item\label{bdPhi}
$\Phi(\ga_k)\subseteq\tau_0\cup(\tau_{k+1}\setminus\tau_k)$.
 
 \end{enumerater}

\end{lemma}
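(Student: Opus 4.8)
The plan is to build the two sequences by a single transfinite recursion on $k<\go_1$, maintaining a ``closure'' invariant that controls where $\Phi$ can place its values. First I would set up the recursion: having chosen ascending $\tau_0<\ga_0<\tau_1<\ga_1<\dots$ up to stage $k$ (with the continuity clause \eqref{tkcont} built in at limits by taking suprema), I must choose $\tau_k$ and then $\ga_k$. The key device is to insist that the ``past'' has been closed off under $\Phi$: more precisely, I would arrange that for every $\gb<\tau_k$ we have $\Phi(\gb)\subseteq\tau_k$ except possibly for the finitely many ordinals already landing in the fixed ``base'' block $\tau_0$. This is the standard $\gD$-system-style bookkeeping: at a successor stage $k=j+1$, after $\ga_j$ has been chosen, I take $\tau_{k}$ to be any countable ordinal above $\ga_j$ large enough that $\Phi(\gb)\setminus\tau_0\subseteq\tau_k$ for all $\gb\le\ga_j$ --- possible since each $\Phi(\gb)$ is finite and we are closing off only countably many sets, whose union of ``tails'' is a countable, hence bounded, subset of $\go_1$. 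At a limit stage, $\tau_k=\sup_{j<k}\tau_j$ automatically inherits this closure property from the earlier stages, since a countable union of bounded-below-$\tau_j$ sets stays below the supremum.

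Next I would choose $\ga_k$. Here is where cofinality of $X$ enters: I pick $\ga_k\in X$ with $\ga_k>\tau_k$ (possible because $X$ is cofinal in $\go_1$ and $\tau_k<\go_1$), which gives clause \eqref{gaktat} once we also know $\ga_k<\tau_{k+1}$ --- but that last inequality is guaranteed by the very choice of $\tau_{k+1}$ at the next stage, which we demand to exceed $\ga_k$. So the ordering $\tau_k<\ga_k<\tau_{k+1}$ is maintained by construction. The remaining point is clause \eqref{bdPhi}: we need $\Phi(\ga_k)\subseteq\tau_0\cup(\tau_{k+1}\setminus\tau_k)$, i.e.\ $\Phi(\ga_k)$ has no elements in the interval $[\tau_0,\tau_k)$ and none at or above $\tau_{k+1}$. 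The ``at or above $\tau_{k+1}$'' part is handled exactly as in the successor-stage closure above: when we pick $\tau_{k+1}$, we additionally require $\Phi(\ga_k)\setminus\tau_0\subseteq\tau_{k+1}$, which is fine since $\Phi(\ga_k)$ is finite. The ``no elements in $[\tau_0,\tau_k)$'' part is the genuinely delicate requirement, and it is what forces us to be cleverer about the choice of $\ga_k$.

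The hard part will be guaranteeing that the single ordinal $\ga_k$ we select from $X$ avoids having any $\Phi$-values in $[\tau_0,\tau_k)$. A single ad hoc choice need not work, so I would instead invoke a pigeonhole/$\gD$-system argument localized to stage $k$: consider the cofinally-many candidates $\ga\in X$ with $\ga>\tau_k$; each has $\Phi(\ga)\cap[\tau_0,\tau_k)$ a finite subset of the countable set $[\tau_0,\tau_k)$; but actually the clean way is to push this uniformity to the very start. That is, I would first apply the classical Shanin $\gD$-Lemma to the family $\vecm{\Phi(\ga)}{\ga\in X}$ to extract a cofinal $X'\subseteq X$ forming a $\gD$-system with root $R$ (a finite set), then replace $X$ by $X'$ and let $\tau_0$ be chosen above $\sup R$ at the very beginning --- wait, that would put $R$ below $\tau_0$ rather than inside it; the correct move is to arrange $R\subseteq\tau_0$ by taking $\tau_0>\sup R$, so that the ``root'' contribution is absorbed into the $\tau_0$ block allowed by \eqref{bdPhi}. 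After this reduction, for $\ga\ne\gb$ in $X'$ the sets $\Phi(\ga)\setminus\tau_0$ and $\Phi(\gb)\setminus\tau_0$ are pairwise disjoint, so at stage $k$ only finitely many $\ga\in X'$ can have $(\Phi(\ga)\setminus\tau_0)\cap[\tau_0,\tau_k)\ne\es$ for a given finite ``forbidden'' set --- actually, since the forbidden region $[\tau_0,\tau_k)$ is merely countable and the $\Phi(\ga)\setminus\tau_0$ are disjoint and nonempty-or-empty, at most countably many $\ga\in X'$ meet it, hence (as $X'$ is cofinal in $\go_1$, uncountable) we can still pick $\ga_k\in X'$ above $\tau_k$ with $\Phi(\ga_k)\setminus\tau_0$ disjoint from $[\tau_0,\tau_k)$, i.e.\ $\Phi(\ga_k)\setminus\tau_0\subseteq\us{\le}{\tau_k}$; combined with the later constraint $\Phi(\ga_k)\setminus\tau_0\subseteq\tau_{k+1}$ this delivers \eqref{bdPhi}. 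Finally I would check the continuity clause \eqref{tkcont} at limits and that both sequences are genuinely ascending and cofinal, and the recursion goes through all of $\go_1$ because at every stage the ordinals we need to exceed form a countable, hence bounded, set. I expect the disjointness-of-tails bookkeeping and the correct placement of the $\gD$-system root relative to $\tau_0$ to be the only subtle points; everything else is routine transfinite recursion.
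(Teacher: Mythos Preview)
Your proposal is correct and, after the mid-course correction you make, lands on essentially the same argument as the paper: apply the $\gD$-Lemma to~$\vecm{\Phi(\ga)}{\ga\in X}$ first, absorb the root~$R$ into~$\tau_0$, and then run the transfinite recursion, using pairwise disjointness of the tails $\Phi(\ga)\setminus R$ to see that only countably many~$\ga\in X'$ can hit the forbidden interval $[\tau_0,\tau_k)$, hence a good $\ga_k>\tau_k$ exists. The initial ``closure under~$\Phi$'' invariant you sketch is indeed a dead end for the reason you identify (it constrains past~$\gb$'s, not the new~$\ga_k$), so the paper skips it and goes straight to the $\gD$-system reduction; once you do the same, the two proofs are the same.
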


\begin{proof}
By the $\gD$-Lemma, there are a cofinal subset~$Y$ of~$X$ and a set~$K$ such that
 \begin{equation}\label{Eq:PhigD}
 \Phi(\ga)\cap\Phi(\gb)=K\text{ whenever }\ga,\gb\in Y
 \text{ and }\ga\neq\gb\,.
 \end{equation}
Pick $\tau_0<\go_1$ such that $K\subseteq\tau_0$.
Let $i<\go_1$ and set
 \[
 i^*\eqdef\begin{cases}i\,,&\text{if }i\text{ is a limit ordinal}\,,\\
 i+1\,,&\text{otherwise}\,.
 \end{cases}
 \]
We argue by induction on~$i$.
Suppose having constructed ascending sequences $\vecm{\tau_k}{k<i^*}$ and $\vecm{\ga_k}{k<i}$ satisfying \eqref{gaktat}--\eqref{bdPhi} whenever $k<i$.
If~$i$ is either~$0$ or a successor, then~$\tau_i$ is already defined.
If~$i$ is a limit, set $\tau_i\eqdef\sup_{j<i}\tau_j$; this takes care of~\eqref{tkcont} at level~$i$.
By the cofinality statement on~$Y$, the countability of~$\tau_i$, and~\eqref{Eq:PhigD}, there exists $\ga_i\in Y$ such that $\ga_i>\tau_i$ and $\Phi(\ga_i)\cap\tau_i\subseteq K$.
Pick $\tau_{i+1}<\go_1$ such that $\tau_{i+1}>\ga_i$ and $\Phi(\ga_i)\subseteq\tau_{i+1}$.
Since $K\subseteq\tau_0$, \eqref{gaktat} and~\eqref{bdPhi} at level~$i$ follow.
\end{proof}

By the Baker-Bernau-Madden duality, the free  $\QQ$-vector lattice~$F(\go_1)$ on $\go_1+\nobreak1$ is isomorphic to the vector sublattice~$F$ of $\QQ^{\QQ^{\go_1+1}}$ generated by all canonical projections $x_{\ga}\colon\QQ^{\go_1+1}\twoheadrightarrow\QQ$ for $\ga\in\go_1+1$.
(Here, unlike in Section~\ref{S:DevCX}, we  use $\QQ^{\go_1+1}$ instead of~$\QQ^{(\go_1+1)}$.)
Every element of~$F$ has the form $\dot{f}\vecm{x_{\ga}}{\ga\in\go_1+1}$ for some vector lattice term~$\dot{f}$.
By the above-cited duality, $\Idc F$ is isomorphic to
the $0$-sublattice of the powerset lattice of~$\QQ^{\go_1}$ generated by the cozero sets $\setm{u\in\QQ^{\go_1+1}}{g(u)\neq 0}$, the latter corresponding to the principal \lidl~$\seq{g}$.
This leads immediately to the following assertion.

\begin{lemma}\label{L:free0}
For any $g,h\in F^+$, the inequality $\seq{g}\le\seq{h}$ holds \pup{within~$\Idc F$} if{f} $h^{-1}\set{0}\subseteq g^{-1}\set{0}$.
\end{lemma}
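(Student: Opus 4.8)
The plan is to read the equivalence off the Baker-Bernau-Madden description of $\Idc F$ recorded in the paragraph immediately preceding the lemma, by matching the ordering of $\Idc F$ with inclusion of cozero sets and then passing to complements.

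I would first settle the direct implication by hand. If $\seq g\le\seq h$ holds within $\Idc F$, i.e.\ $\seq g\subseteq\seq h$, then, as recalled in Section~\ref{S:DevCX}, $g\le mh$ for some $m\in\go$; hence every $u\in\QQ^{\go_1+1}$ with $h(u)=0$ satisfies $0\le g(u)\le mh(u)=0$, so $g(u)=0$. This yields $h^{-1}\set{0}\subseteq g^{-1}\set{0}$, and, incidentally, shows that $\seq g\mapsto\setm{u\in\QQ^{\go_1+1}}{g(u)\neq0}$ is well defined on $\Idc F$.

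For the converse I would invoke the duality. The cozero sets $\setm{u\in\QQ^{\go_1+1}}{g(u)\neq0}$, for $g\in F$, form a $0$-sublattice $\mathcal L$ of the powerset of $\QQ^{\go_1+1}$: it is closed under finite unions and finite intersections and has $\es$ as least element, since the cozero set of $|g|+|h|$ is $\setm{u}{g(u)\neq0}\cup\setm{u}{h(u)\neq0}$, that of $|g|\wedge|h|$ is $\setm{u}{g(u)\neq0}\cap\setm{u}{h(u)\neq0}$, and that of $0$ is $\es$. By the cited duality, $\seq g\mapsto\setm{u\in\QQ^{\go_1+1}}{g(u)\neq0}$ is a lattice isomorphism of $\Idc F$ onto $\mathcal L$; since a lattice isomorphism reflects the ordering and $\mathcal L$ carries set inclusion, $\setm{u}{g(u)\neq0}\subseteq\setm{u}{h(u)\neq0}$ forces $\seq g\le\seq h$. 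Finally, complementing within $\QQ^{\go_1+1}$ rewrites the hypothesis $h^{-1}\set{0}\subseteq g^{-1}\set{0}$ as exactly $\setm{u}{g(u)\neq0}\subseteq\setm{u}{h(u)\neq0}$, which closes the argument.

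The only thing calling for care is the bookkeeping of the two orderings --- the direction of the cozero correspondence, and the fact that $\Idc F$ and $\mathcal L$ are ordered compatibly. I do not expect a genuine obstacle: the analytic content, namely that $|g|\le m|h|$ for some $m\in\go$ as soon as the zero set of $h$ is contained in that of $g$ (a phenomenon reflecting the piecewise-linearity of the members of $F$), is already packaged inside the Baker-Bernau-Madden duality, which is used here as a black box. As the text says, the assertion is then immediate.
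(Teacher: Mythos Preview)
Your proposal is correct and takes essentially the same approach as the paper, which simply records that the statement ``leads immediately'' from the Baker--Bernau--Madden description of $\Idc F$ as the lattice of cozero sets. Your write-up merely unpacks this in more detail, handling the forward direction by the elementary observation $g\le mh$ and the converse by the order-reflecting property of the isomorphism; there is no substantive difference in strategy.
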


Now we introduce our counterexample.
Let us denote
 \begin{multline*}
 \gO_M\eqdef\{u\in\QQ^{\go_1} \mid
 0\leq u_{\ga}\leq 1
 \text{ whenever }\ga\in M\\
 \text{ and }u_{\gc}\leq 2u_{\gb}\text{ whenever }
 \gb,\gc\in M\text{ and }
 0<\gc<\gb\}
 \end{multline*}
whenever $M\subseteq\go_1$, and set $\gO\eqdef \gO_{\go_1}$.
In the sequel, let~$G$ be the vector sublattice of~$\QQ^{\gO}$ generated by all 
canonical projections $p_{\ga}\colon\gO\to\QQ$
for $\ga\in\go_1$ together with the constant function $\gO\to\QQ$, $u\mapsto1$, which we shall conveniently
denote~$p_{\go_1}$ even though it is not a ``projection''.

\begin{lemma}\label{L:free1}
Let $f_0\colon\setm{p_{\ga}}{\ga\in\go_1}\to\QQ$ satisfy $0\leq f_0(p_{\ga})\leq 1$ whenever $\ga<\go_1$
and $f_0(p_{\gc})\leq 2f_0(p_{\gb})$ whenever $0<\gc<\gb<\go_1$.
Then~$f_0$ can be extended to a unique \lhom\ $G\to\QQ$ mapping~$p_{\go_1}$ to~$1$.
\end{lemma}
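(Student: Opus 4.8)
The plan is to produce the extension as the evaluation homomorphism at a single point of~$\gO$, so that no ``limit'' argument is needed. First I would dispose of uniqueness: since~$G$ is generated, as a vector lattice over~$\QQ$, by the family $\setm{p_\ga}{\ga\in\go_1}\cup\set{p_{\go_1}}$, and an \lhom\ from a vector lattice into~$\QQ$ is determined by its restriction to any generating set (the elements on which two \lhom{s} agree always form a sub-vector-lattice), any \lhom\ $G\to\QQ$ that extends~$f_0$ and maps~$p_{\go_1}$ to~$1$ is uniquely determined.

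For existence, I would define $u\in\QQ^{\go_1}$ by $u_\ga\eqdef f_0(p_\ga)$ for each $\ga<\go_1$. The two hypotheses imposed on~$f_0$ --- namely $0\leq f_0(p_\ga)\leq1$ for all $\ga<\go_1$ and $f_0(p_\gc)\leq 2f_0(p_\gb)$ whenever $0<\gc<\gb<\go_1$ --- are exactly the inequalities defining $\gO=\gO_{\go_1}$, whence $u\in\gO$. Then the evaluation map $\mathrm{ev}_u\colon G\to\QQ$, $g\mapsto g(u)$, is well defined, and since~$G$ is by construction a vector sublattice of the function lattice~$\QQ^\gO$, in which $+$, scalar multiplication, $\vee$, and~$\wedge$ are all computed coordinatewise, $\mathrm{ev}_u$ is an \lhom. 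It sends~$p_\ga$ to $p_\ga(u)=u_\ga=f_0(p_\ga)$ for every $\ga<\go_1$, and it sends~$p_{\go_1}$, being the constant function~$1$, to~$1$; so it is the required extension, necessarily the unique one by the previous paragraph.

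I do not anticipate a genuine obstacle here: the whole point is that the constraints placed on~$f_0$ were tailored to coincide with the definition of~$\gO$, so that~$u$ lands in~$\gO$, after which everything reduces to the elementary fact that point evaluation on a vector lattice of $\QQ$-valued functions is an \lhom. Unlike the free-vector-lattice computations in Section~\ref{S:DevCX}, this needs no quantifier-elimination input. If anything requires attention, it is only the indexing convention (here $\gO\subseteq\QQ^{\go_1}$, with~$p_{\go_1}$ a constant function rather than a coordinate projection), which causes no difficulty.
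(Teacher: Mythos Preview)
Your proof is correct and follows essentially the same route as the paper: define the vector $u=\vecm{f_0(p_\ga)}{\ga<\go_1}$, observe that the hypotheses place it in~$\gO$, and take the evaluation morphism at~$u$ as the desired extension. The paper simply omits the uniqueness argument (stating only ``We only need to verify the existence part'') and gives fewer details, but the idea is identical.
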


\begin{proof}
We only need to verify the existence part.
The vector $w\eqdef\vecm{f_0(p_{\ga})}{\ga\in\go_1}$ belongs to~$\gO$.
The evaluation morphism $e_w\colon g\mapsto g(w)$ is the desired extension.
\end{proof}

\begin{lemma}\label{L:ExtG2G}
Let $f_0\colon\setm{p_{\ga}}{\ga\in\go_1}\to G$ satisfy $0\leq f_0(p_{\ga})\leq p_{\go_1}$ whenever $\ga<\nobreak\go_1$
and $f_0(p_{\gc})\leq 2f_0(p_{\gb})$ whenever $0<\gc<\gb<\go_1$.
Then~$f_0$ can be extended to a unique $\ell$-en\-do\-mor\-phism of~$G$ mapping~$p_{\go_1}$ to itself.
\end{lemma}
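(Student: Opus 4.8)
The plan is to construct the desired $\ell$-endomorphism~$\gf$ of~$G$ \emph{pointwise over~$\gO$}, reducing everything to Lemma~\ref{L:free1}. Recall that, by construction, $G$ is a vector sublattice of~$\QQ^{\gO}$, so each $g\in G$ is a $\QQ$-valued function on~$\gO$, the order of~$G$ is the pointwise order, and $p_{\go_1}$ is the constant function with value~$1$.

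Fix $w\in\gO$. Evaluating the two hypotheses on~$f_0$ at the point~$w$ yields $0\leq f_0(p_{\ga})(w)\leq 1$ for every $\ga<\go_1$ and $f_0(p_{\gc})(w)\leq 2f_0(p_{\gb})(w)$ whenever $0<\gc<\gb<\go_1$; that is, the assignment $p_{\ga}\mapsto f_0(p_{\ga})(w)$ (for $\ga<\go_1$) satisfies the hypotheses of Lemma~\ref{L:free1}. Hence there is a unique \lhom\ $\gs_w\colon G\to\QQ$ such that $\gs_w(p_{\go_1})=1$ and $\gs_w(p_{\ga})=f_0(p_{\ga})(w)$ for all $\ga<\go_1$. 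I then define $\gf\colon G\to\QQ^{\gO}$ by $\gf(g)(w)\eqdef\gs_w(g)$. Since each~$\gs_w$ is an \lhom\ and the vector lattice operations of~$\QQ^{\gO}$ are computed coordinatewise, $\gf$ is an \lhom\ from~$G$ into~$\QQ^{\gO}$.

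It remains to check that $\gf$ maps~$G$ into~$G$, which is the one step that is not purely formal. By construction, $\gf(p_{\ga})(w)=\gs_w(p_{\ga})=f_0(p_{\ga})(w)$ for every $w\in\gO$, so $\gf(p_{\ga})=f_0(p_{\ga})\in G$ for $\ga<\go_1$, and likewise $\gf(p_{\go_1})=p_{\go_1}\in G$. Therefore $\setm{g\in G}{\gf(g)\in G}$ is a vector sublattice of~$G$ containing the generating set $\setm{p_{\ga}}{\ga\leq\go_1}$ of~$G$, and hence equals~$G$. Thus~$\gf$ is an $\ell$-endomorphism of~$G$ with $\gf(p_{\ga})=f_0(p_{\ga})$ for $\ga<\go_1$ and $\gf(p_{\go_1})=p_{\go_1}$, as required. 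Uniqueness is immediate: two $\ell$-endomorphisms of~$G$ that agree on $\setm{p_{\ga}}{\ga\leq\go_1}$ must agree on all of~$G$.

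I do not expect a genuine obstacle here: the substance is entirely contained in Lemma~\ref{L:free1}, and the passage from $\QQ$-valued to $G$-valued data costs nothing, being absorbed by the coordinatewise construction together with the elementary generation argument of the previous paragraph. Routing the definition of~$\gf$ through evaluation at the points of~$\gO$, rather than through an explicit vector lattice term for~$\gf$, is exactly what keeps this painless.
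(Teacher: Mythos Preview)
Your proof is correct and follows essentially the same route as the paper: apply Lemma~\ref{L:free1} pointwise over~$\gO$ to obtain the componentwise maps~$\gs_w$ (the paper's~$f_u$), then take their product to get an \lhom\ $G\to\QQ^{\gO}$ extending~$f_0$ and fixing~$p_{\go_1}$. Your write-up is simply more explicit than the paper's in verifying that the image lands in~$G$ and in stating uniqueness.
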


\begin{proof}
We use the fact that~$G$ is a vector sublattice of~$\QQ^{\gO}$ \emph{via} the evaluation morphisms~$e_u$ for $u\in\gO$.
By Lemma~\ref{L:free1}, each map $f_0^u\eqdef e_uf_0$ can be extended to an \lhom\ 
$f_u\colon G\to\QQ$ such that $f_u(p_{\go_1})=1$.
Then the product homomorphism $f\eqdef\prod_{u\in\gO}f_u$ extends $f_0$ and $f(p_{\go_1})=p_{\go_1}$.
\end{proof}

\begin{lemma}\label{L:ZM2Z}
Let~$M$ be a subset of~$\go_1$.
Then for every $u\in\gO_M$ there exists $v\in\gO$ such that $u\res_{M}=v\res_{M}$.
\end{lemma}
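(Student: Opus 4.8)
The plan is to build the required $v\in\gO$ from $u$ by a simple ``reaching-up'' rule: at every coordinate $\ga<\go_1$, copy into $v_\ga$ the value that $u$ takes at the least element of $M$ lying at or above $\ga$, and put $v_\ga=1$ if $M$ has no element at or above $\ga$. Concretely, I would set, for $\ga<\go_1$,
\[
\beta(\ga)\eqdef\min\pI{M\cap[\ga,\go_1)}\quad\text{whenever }M\cap[\ga,\go_1)\neq\es,
\]
and define $v_\ga\eqdef u_{\beta(\ga)}$ in that case, $v_\ga\eqdef1$ otherwise. Two things are then immediate: since $\beta(\ga)=\ga$ for every $\ga\in M$, we get $v\res_M=u\res_M$; and since each $v_\ga$ belongs to $\setm{u_m}{m\in M}\cup\set{1}\subseteq\QQ$, we indeed have $v\in\QQ^{\go_1}$.

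It remains to verify $v\in\gO$. The bounds $0\le v_\ga\le1$ hold because $u_m\in[0,1]$ for every $m\in M$ (as $u\in\gO_M$) and $1\in[0,1]$. The one inequality that needs work is $v_\gc\le2v_\gb$ for $0<\gc<\gb<\go_1$. The key observation is that $\beta$ is isotone where defined: from $\gc<\gb$ we get $M\cap[\gb,\go_1)\subseteq M\cap[\gc,\go_1)$, so whenever the former is nonempty so is the latter, and then $\beta(\gc)\le\beta(\gb)$. I would then split into three cases. If $M\cap[\gb,\go_1)=\es$, then $v_\gb=1$ and $v_\gc\le1\le2=2v_\gb$. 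If $M\cap[\gb,\go_1)\neq\es$ and $\beta(\gc)=\beta(\gb)$, then $v_\gc=v_\gb\ge0$, whence $v_\gc\le2v_\gb$. Finally, if $\beta(\gc)<\beta(\gb)$, then $0<\gc\le\beta(\gc)<\beta(\gb)$ with $\beta(\gc),\beta(\gb)\in M$, so the defining inequality of $\gO_M$ gives $v_\gc=u_{\beta(\gc)}\le2u_{\beta(\gb)}=2v_\gb$. This exhausts all cases and finishes the argument.

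I do not expect any serious obstacle; the only thing to get right is the \emph{direction} of the copying rule. Copying the value at the largest element of $M$ \emph{below} $\ga$, or assigning one constant value on all of $\go_1\setminus M$, both fail: one can place an index $\gc\in M$ with $u_\gc$ close to $1$ and an index $\gb\in M$ with $u_\gb$ close to $0$ on opposite sides of a block of coordinates outside $M$, and then the constraint $v_\gc\le2v_\gb$ cannot be met by any single interpolating value. Reaching \emph{up} works precisely because the inequalities defining $\gO_M$ always run from a smaller index to a larger one, so the value copied from the least available $M$-index above $\ga$ remains compatible with every constraint; the edge case where $\ga$ exceeds all of $M$ is harmless because the constant $1$ is the largest admissible value and only ever appears on the right-hand side of the relevant inequalities. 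Rationality of $v$ is then automatic, so no approximation of reals by rationals is needed.
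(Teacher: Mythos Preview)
Your construction is exactly the one the paper uses: set $v_\xi\eqdef u_{\min(M\cap[\xi,\go_1))}$ when that set is nonempty and $v_\xi\eqdef1$ otherwise. The paper merely writes ``It is easy to see that $v\in\gO$'' for the verification you spell out in full; your case split is correct, so this is the same proof with the routine details made explicit.
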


\begin{proof}
Set $M{\uparrow}\xi\eqdef\setm{\ga\in M}{\ga\geq\xi}$ whenever $\xi<\go_1$, and define
  \[
 v_{\xi}\eqdef\begin{cases}
 u_{\tau}\,,&\text{if } M{\uparrow}\xi\ne\es\text{ and } \tau=\min M{\uparrow}\xi\,,\\
1\,,&\text{otherwise}\,,
 \end{cases}
 \quad\text{whenever }\xi<\go_1\,.
 \]
It is easy to see that $v\in\gO$.
\end{proof}

\begin{lemma}\label{L:IdcG}
 For any $g,h\in G^+$, the inequality $\seq{g}\le\seq{h}$ holds \pup{within~$\Idc{G}$} if{f} 
$h^{-1}\set{0}\subseteq g^{-1}\set{0}$.
\end{lemma}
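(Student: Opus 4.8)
The plan is to reduce the statement to one about finitely many real variables ranging over a \emph{bounded} polytope, where set-theoretic containment of zero sets forces a linear bound by an elementary vertex argument.

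First I would record the easy half and reformulate. For $g,h\in G^+$, the inequality $\seq g\les\seq h$ in~$\Idc G$ is by definition the inclusion $\seq g\subseteq\seq h$, which --- $\seq h$ being the convex \lsgrp\ generated by~$h$ --- amounts to the existence of $m\in\go$ with $g\les mh$, pointwise on~$\gO$ since~$G$ is a vector sublattice of~$\QQ^{\gO}$. The forward implication is then immediate: if $g\les mh$ and $h(u)=0$, then $0\les g(u)\les mh(u)=0$, so $u\in g^{-1}\set0$. Hence the real content is the converse: from $h^{-1}\set0\subseteq g^{-1}\set0$ inside~$\gO$, produce such an~$m$.

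For that I would pass to finitely many coordinates. Each of~$g,h$ is a vector-lattice term in finitely many generators; collect in $X\in\fin{\go_1}$ the indices $\ga<\go_1$ of the projections~$p_{\ga}$ occurring in those terms (the generator~$p_{\go_1}$, being the constant~$1$, contributes nothing beyond fixing the domain). Then~$g$ and~$h$, viewed as functions on~$\gO$, factor through the coordinate projection $\pi_X\colon\gO\to\QQ^X$; write $\ol g,\ol h\colon\QQ^X\to\QQ$ for the induced piecewise-linear maps, so $g=\ol g\circ\pi_X$ and $h=\ol h\circ\pi_X$ on~$\gO$. Since $\gO\subseteq\gO_X$ trivially, while Lemma~\ref{L:ZM2Z} (with $M=X$) shows that every element of~$\gO_X$ agrees on~$X$ with an element of~$\gO$, the image $\pi_X[\gO]$ is exactly the set~$\gD$ of rational points of the bounded rational polytope $\gD^{\RR}\subseteq\RR^X$ cut out by $0\les v_{\ga}\les1$ ($\ga\in X$) and $v_{\gc}\les2v_{\gb}$ ($\gb,\gc\in X$, $0<\gc<\gb$). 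The hypothesis then becomes $\ol h^{-1}\set0\cap\gD\subseteq\ol g^{-1}\set0\cap\gD$, one has $\ol g,\ol h\geq0$ on~$\gD$, and the goal becomes $\ol g\les m\ol h$ on~$\gD$ for some~$m$.

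The last step is polytope geometry. Extend $\ol g,\ol h$ to continuous piecewise-linear functions on~$\RR^X$; by density of~$\QQ^X$ they remain $\geq0$ on the compact polytope~$\gD^{\RR}$, and, since $\ol h^{-1}\set0\cap\gD^{\RR}$ is a finite union of rational polytopes --- each with a dense set of rational points, which lie in~$\gD$ and are killed by~$\ol h$, hence by~$\ol g$ --- continuity forces $\ol g\equiv0$ on $\ol h^{-1}\set0\cap\gD^{\RR}$. Now subdivide~$\gD^{\RR}$ into finitely many polytopes $P_1,\dots,P_N$ on each of which both~$\ol g$ and~$\ol h$ are affine, say $g_i\eqdef\ol g\res_{P_i}$ and $h_i\eqdef\ol h\res_{P_i}$; these are $\geq0$ on~$P_i$ with $\set{h_i=0}\cap P_i\subseteq\set{g_i=0}$. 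For each~$i$ the affine function $m_ih_i-g_i$ is $\geq0$ on~$P_i$ as soon as it is $\geq0$ at the finitely many vertices of~$P_i$: at a vertex~$w$ with $h_i(w)>0$ it suffices to take $m_i\geq g_i(w)/h_i(w)$, and at a vertex with $h_i(w)=0$ one has $g_i(w)=0$ for free. Setting $m\eqdef\max_i m_i$ then yields $\ol g\les m\ol h$ on $\bigcup_iP_i=\gD^{\RR}\supseteq\gD$, hence $g\les mh$ on~$\gO$, as required. I expect the crux to be conceptual rather than computational: the result does \emph{not} follow from the free-vector-lattice analogue Lemma~\ref{L:free0}, since~$G$ is not free and the hypothesis controls zero sets only over the restricted domain~$\gO$; it is precisely Lemma~\ref{L:ZM2Z}, pinning down $\pi_X[\gO]$ as a \textbf{bounded} polytope, that makes the compactness/vertex bound available (for an arbitrary vector sublattice of~$\QQ^X$ the statement fails, e.g. $|x|$ and $|x|\wedge1$ share the zero set~$\set0$ in~$\QQ$ but generate distinct principal \lidl{s}).
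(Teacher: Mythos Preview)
Your argument is correct, and it follows a genuinely different route from the paper's. The paper does \emph{not} argue by compactness on a bounded polytope; instead it reduces to the free case (Lemma~\ref{L:free0}) by an algebraic trick. Namely, writing $g=\dot g(p_\ga\mid\ga\in M\cup\set{\go_1})$ and similarly for~$h$, the paper passes to the free vector lattice~$F$ on~$\go_1+1$ and sets $g_1\eqdef\dot g(x_\ga)\vee d$, $h_1\eqdef\dot h(x_\ga)\vee d$, where~$d$ is the join of the linear forms encoding the defining inequalities of~$\gO_M$; outside the homogenized cone~$\bgO_M$ one has $d>0$, while inside it a scaling by~$u_{\go_1}$ together with Lemma~\ref{L:ZM2Z} transfers the zero-set hypothesis from~$g,h$ on~$\gO$ to~$g_1,h_1$ on all of~$\QQ^{\go_1+1}$. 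Lemma~\ref{L:free0} then yields $g_1\leq mh_1$, and restricting to $u\conc(1)$ with $u\in\gO$ gives $g\leq mh$. Your approach is more self-contained --- it bypasses the Baker--Bernau--Madden duality underlying Lemma~\ref{L:free0} and replaces it by an elementary vertex bound, at the cost of the density step passing through~$\RR^X$. The paper's approach stays entirely over~$\QQ$ and reuses existing machinery, but needs the homogenization and the $\vee d$ encoding. Both rely essentially on Lemma~\ref{L:ZM2Z} to identify the relevant domain; your closing remark correctly pinpoints boundedness of $\pi_X[\gO]$ as the feature that makes either strategy work.
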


\begin{proof}
We verify the nontrivial direction.
Suppose that $h^{-1}\set{0}\subseteq g^{-1}\set{0}$.
Since $\set{g,h}\subseteq G$ there are a finite subset~$M$ of~$\go_1$ and vector lattice terms~$\dot{g}$ and~$\dot{h}$ such that
 \[
 g=\dot{g}\vecm{p_{\ga}}{\ga\in M\cup\set{\go_1}}\text{ and }
 h=\dot{h}\vecm{p_{\ga}}{\ga\in M\cup\set{\go_1}}\,.
 \]
We can assume $M\ne\es$.
Define elements~$d$, $g_1$, $h_1$ of~$F$ as
 \[
 d\eqdef\bigvee_{\ga\in M}(-x_{\ga})\vee
 \bigvee_{\ga\in M}(x_{\ga}-x_{\go_1})
 \vee\bigvee_{\gc,\gb\in M,\ 0<\gc<\gb}
 (x_{\gc}-2x_{\gb})\,,
 \]
then $g_1\eqdef\dot{g}\vecm{x_{\ga}}{\ga\in M\cup\set{\go_1}}\vee d$ and
$h_1\eqdef\dot{h}\vecm{x_{\ga}}{\ga\in M\cup\set{\go_1}}\vee d$.
We denote 
 \begin{multline*}
 \bgO_M\eqdef\{u\in\QQ^{\go_1+1} \mid
 0\leq u_{\ga}\leq u_{\go_1}
 \text{ whenever }\ga\in M\\
 \text{ and }u_{\gc}\leq 2u_{\gb}\text{ whenever }
 \gb,\gc\in M\text{ and }
 0<\gc<\gb\}.
 \end{multline*}

\begin{sclaim}
Let $u\in\bgO_M$. Then 
$g_1(u)\geq0$ and $h_1(u)\geq0$, and, further, $h_1(u)=0$ implies that $g_1(u)=0$.
\end{sclaim}

\begin{scproof}
The assumption $u\in\bgO_M$ implies $d(u)\leq 0$.
The case $u_{\go_1}<0$ is excluded as $0\leq u_\alpha\leq u_{\go_1}$ for $\alpha\in M\ne\es$.
If $u_{\go_1}=0$ then $u_\alpha=0$ for every $\alpha\in M$, so $g_1(u)=h_1(u)=0$ and we are done. Suppose now that  $u_{\go_1}>0$.
Consider the element $u'\eqdef u_{\go_1}^{-1}\cdot u$.
Then~$u'\res_{\go_1}$ belongs to~$\gO_M$ and
it follows from Lemma~\ref{L:ZM2Z} that $u'\res_{M}=v\res_{M}$ for some $v\in\gO$. 
We obtain
\[ \dot{g}\vecm{u_{\ga}}{\ga\in M\cup\set{\go_1}}=u_{\go_1} \dot{g}\vecm{u'_{\ga}}{\ga\in M\cup\set{\go_1}} =
u_{\go_1}g(v).
\]
(For the last equality notice that $u'_{\go_1}=1=p_{\go_1}(v)$.)
Since $g\in G^+$ we get $g(v)\ge 0$.
Since $d(u)\leq 0$, we obtain
 \[
 g_1(u)= u_{\go_1}g(v)\vee d(u)= u_{\go_1}g(v)\geq0\,,
 \]
and similarly,
\[ 
h_1(u)=u_{\go_1}h(v)\geq0\,.
\]
Hence $h_1(u)=0$ implies $h(v)=0$, thus (since $v\in\gO$) $g(v)=0$, and thus $g_1(u)=\nobreak0$.
\end{scproof}

On the other hand, for any $u\in\QQ^{\go_1+1}\setminus\bgO_M$, $d(u)>0$ and hence $g_1(u),h_1(u)>0$.
Therefore, it follows from the Claim above that $\set{g_1,h_1}\subseteq F^+$ and $h_1^{-1}\set{0}\subseteq g_1^{-1}\set{0}\subseteq\gO_M$.
By Lemma~\ref{L:free0}, the latter relation entails $\seq{ g_1}\leq\seq{h_1}$ (within~$\Idc{F}$), that is, $g_1\leq mh_1$ for some $m>0$.
Then for every $u\in\gO$ (so $u\conc(1)\in\gO_M$), $g(u)=g_1(u\conc(1))\leq mh_1(u\conc(1))=mh(u)$, thus $g\leq mh$, and thus $\seq{g}\leq\seq{h}$ (within~$\Idc{G}$).
\end{proof}

\begin{lemma}\label{L:pscom}
For every $\ga\in\go_1\setminus\set{0}$ and every positive $c\in\QQ$, $\seq{(p_0-cp_{\ga})^+}$ and $\seq{(cp_{\ga}-p_0)^+}$ are pseudocomplements of each other within~$\Idc{G}$.
\end{lemma}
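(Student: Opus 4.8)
The plan is to carry out everything inside the function‑theoretic description of~$\Idc{G}$ supplied by Lemma~\ref{L:IdcG}: the members of~$G$ are $\QQ$‑valued functions on~$\gO$, and $\seq{g}\leq\seq{h}$ precisely when $h^{-1}\set{0}\subseteq g^{-1}\set{0}$. Put $a\eqdef(p_0-cp_{\ga})^+$ and $b\eqdef(cp_{\ga}-p_0)^+$; these lie in~$G^+$, and a direct evaluation yields $a^{-1}\set{0}=\setm{u\in\gO}{u_0\leq cu_{\ga}}$ and $b^{-1}\set{0}=\setm{u\in\gO}{cu_{\ga}\leq u_0}$. In particular their union is~$\gO$, i.e. $a\wedge b=0$, so $\seq{a}\wedge\seq{b}=\seq{a\wedge b}=\seq{0}=0$ because the Belluce map $\gb\colon G^+\to\Idc{G}$ is a lattice homomorphism. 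As~$\gb$ is also surjective, it remains to prove that~$\seq{b}$ is the pseudocomplement of~$\seq{a}$ and conversely; equivalently, for every $g\in G^+$, that $\seq{g}\wedge\seq{a}=0$ implies $\seq{g}\leq\seq{b}$, and $\seq{g}\wedge\seq{b}=0$ implies $\seq{g}\leq\seq{a}$.

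Fixing $g\in G^+$, I would reread those implications through the same dictionary. Unravelling $\seq{g}\wedge\seq{a}=0$ exactly as above shows it to mean that $g$ vanishes on the ``open half'' $\setm{u\in\gO}{u_0>cu_{\ga}}$; while $\seq{g}\leq\seq{b}$ means that $g$ vanishes on the ``closed half'' $b^{-1}\set{0}=\setm{u\in\gO}{u_0\geq cu_{\ga}}$, and symmetrically with the inequalities reversed. So the lemma reduces to the single assertion: \emph{if $g\in G^+$ vanishes on $\setm{u\in\gO}{u_0>cu_{\ga}}$ then it vanishes on the boundary $\setm{u\in\gO}{u_0=cu_{\ga}}$ as well, and likewise from below}. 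Here I would invoke two soft facts. First, as recorded in the proof of Lemma~\ref{L:IdcG}, any fixed $g\in G$ involves only finitely many coordinates and is a lattice term in them, so $g\colon\gO\to\QQ$ is continuous for the product topology and $g^{-1}\set{0}$ is relatively closed in~$\gO$. Second, $\gO$ is an intersection of half‑spaces of~$\QQ^{\go_1}$, hence convex.

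The residual argument is then a standard density argument: given $u\in\gO$ with $u_0=cu_{\ga}$, choose one point $w^+\in\gO$ with $w^+_0>cw^+_{\ga}$ and one point $w^-\in\gO$ with $w^-_0<cw^-_{\ga}$ --- say the characteristic vector of~$\set{0}$ and the characteristic vector of $\go_1\setminus\set{0}$. By convexity the segments $t\mapsto(1-t)u+tw^{\pm}$ stay in~$\gO$, they lie strictly on the appropriate side of the hyperplane $\setm{v}{v_0=cv_{\ga}}$ for $0<t<1$, and they converge to~$u$ as $t\to 0^{+}$, so continuity of~$g$ forces $g(u)=0$. The only genuine content beyond bookkeeping is confined to this boundary hyperplane: one must know that it is approximable within~$\gO$ from each open side, which is exactly what convexity of~$\gO$ plus the two explicit witnesses deliver --- and the verification that $w^{\pm}$ satisfy all the defining inequalities of~$\gO$ (notably $u_{\gc}\leq 2u_{\gb}$ for $0<\gc<\gb$) is the one place those inequalities enter, and the reason the index~$0$ and the index~$\ga\neq0$ must be treated asymmetrically.
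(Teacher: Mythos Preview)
Your argument is correct and rests on the same underlying idea as the paper's: reduce via Lemma~\ref{L:IdcG} to showing that any $g\in G^+$ vanishing on the open half $\setm{u\in\gO}{u_0>cu_{\ga}}$ (resp.\ $\setm{u\in\gO}{u_0<cu_{\ga}}$) must also vanish on the boundary hyperplane $\setm{u\in\gO}{u_0=cu_{\ga}}$, and do this by perturbing a boundary point into the open half while staying in~$\gO$ and exploiting continuity of~$g$ in its finitely many relevant coordinates.

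The difference is in execution. The paper argues by contradiction and handles the perturbation by an explicit case split (e.g.\ if $z_0<1$ increase~$z_0$ slightly; if $z_0=1$ scale all positive-index coordinates by $1-\eps$), with a symmetric pair of cases for the other direction. You instead observe once that~$\gO$ is convex, exhibit two fixed witnesses $w^{+}=\chi_{\set{0}}$ and $w^{-}=\chi_{\go_1\setminus\set{0}}$ lying strictly in the two open halves, and slide along the segment toward the appropriate witness. This buys you a uniform, case-free argument: the four case analyses in the paper collapse to a single convexity-plus-continuity step, and the verification that the perturbed point remains in~$\gO$ is done once (checking $w^{\pm}\in\gO$) rather than repeatedly. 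The paper's version is more concrete; yours is shorter and makes clearer why the argument works.
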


\begin{proof}
Trivially $\seq{(p_0-cp_{\ga})^+}\wedge\seq{(cp_{\ga}-p_0)^+}=0$.

Now let $t\in G^+$.
Assume first that $\seq{t}\wedge\seq{(p_0-cp_{\ga})^+}=0$.
By way of contradiction, suppose that $\seq{t}\nleq\seq{(cp_{\ga}-p_0)^+}$.
By Lemma~\ref{L:IdcG}, there exists $z\in\gO$ such that $t(z)>0$ whereas $cz_{\ga}\leq z_0$.
{F}rom $\seq{t}\wedge\seq{(p_0-cp_{\ga})^+}=0$ it thus follows that $z_0\leq cz_{\ga}$, thus $z_0=cz_{\ga}$.
We separate cases.

\begin{itemize}
\item
If $z_0<1$, let $y\in\gO$ be defined by $y_0=z_0+\eps$ whereas $y_{\gb}=z_{\gb}$ whenever $0<\gb<\go_1$, with~$\eps>0$ chosen small enough to ensure $z_0+\eps\leq 1$ and  $t(y)>0$.
This is possible since~$t(y)$ depends only on a finite number of components of~$y$.
Then $y_0-cy_{\ga}=(z_0-cz_{\ga})+\eps=\eps>0$, in contradiction with $\seq{t}\wedge\seq{(p_0-cp_{\ga})^+}=0$.

\item
If $z_0=1$, so $z_{\ga}>0$, let $y\in\gO$ be defined by $y_0=z_0$ whereas $y_{\gb}=(1-\eps)z_{\gb}$ whenever $0<\gb<\go_1$, with~$\eps>0$ chosen small enough to ensure $t(y)>0$.
Then $y_0-cy_{\ga}=(z_0-cz_{\ga})+\eps z_{\ga}=\eps z_{\ga}>0$, in contradiction with $\seq{t}\wedge\seq{(p_0-cp_{\ga})^+}=0$.
\end{itemize}

Assume now that $\seq{t}\wedge\seq{(cp_{\ga}-p_0)^+}=0$.
By way of contradiction, suppose that $\seq{t}\nleq\seq{(p_0-cp_{\ga})^+}$.
By Lemma~\ref{L:IdcG}, there exists $z\in\gO$ such that $t(z)>0$ whereas $cz_{\ga}\geq z_0$.
{F}rom $\seq{t}\wedge\seq{(cp_{\ga}-p_0)^+}=0$ it thus follows that $cz_{\ga}\leq z_0$, thus $z_0=cz_{\ga}$.
We separate cases.

\begin{itemize}
\item
If $z_0>0$, let $y\in\gO$ be defined by $y_0=(1-\eps)z_0$ whereas $y_{\gb}=z_{\gb}$ whenever $0<\gb<\go_1$, with~$\eps>0$ chosen small enough to ensure $t(y)>0$.
Then\linebreak $cy_{\ga}-y_0=(cz_{\ga}-z_0)+\eps z_0=\eps z_0>0$, in contradiction with\linebreak $\seq{t}\wedge\seq{(cp_{\ga}-p_0)^+}=0$.

\item
If $z_0=0$, so $z_{\ga}=0$, let $y\in\gO$ be defined by $y_0=z_0$ whereas $y_{\gb}=\min\set{z_{\gb}+\eps,1}$ whenever $0<\gb<\go_1$, with~$\eps>0$ chosen small enough to ensure $\eps<1$ and $t(y)>0$.
Then $cy_{\ga}-y_0=(cz_{\ga}-z_0)+c\eps=c\eps>0$, in contradiction with $\seq{t}\wedge\seq{(cp_{\ga}-p_0)^+}=0$.
\qed
\end{itemize}
\renewcommand{\qed}{}
\end{proof}

\begin{theorem}\label{T:NoIsoAnti}
The vector lattice~$G$ is Archimedean with strong unit.
Furthermore, no deviation on~$\Idc{G}$ can be either left isotone or right antitone.
In fact, for every deviation~$\sd$ on~$\Idc{G}$ there are ordinals~$\ga$, $\gb$, $\ga'$, $\gb'$ such that $0<\ga<\gb<\go_1$, $0<\ga'<\gb'<\go_1$, $\seq{p_0}\sd\seq{p_{\gb}}\nleq\seq{p_0}\sd\seq{p_{\ga}}$, and $\seq{p_{\ga'}}\sd\seq{p_0}\nleq\seq{p_{\gb'}}\sd\seq{p_0}$.
\end{theorem}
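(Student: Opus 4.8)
The plan is to argue by contradiction, using Lemma~\ref{L:kfree} as a combinatorial pigeonhole device to locate an offending pair of ordinals. First I would fix a deviation~$\sd$ on~$\Idc{G}$ and suppose toward a contradiction that it is, say, left isotone (the right antitone case being symmetric, with the roles of~$p_0$ and~$p_{\ga}$ interchanged as in the two halves of Lemma~\ref{L:pscom}). For each $\ga\in\go_1\setminus\set{0}$, pick positive generators and look at the element $\seq{p_0}\sd\seq{p_{\ga}}$ of~$\Idc{G}$; by Lemma~\ref{L:pscom}, $\seq{(p_0-p_{\ga})^+}$ and $\seq{(p_{\ga}-p_0)^+}$ are pseudocomplements of one another, and the deviation inequalities $\seq{p_0}\leq\seq{p_{\ga}}\vee(\seq{p_0}\sd\seq{p_{\ga}})$ together with $(\seq{p_0}\sd\seq{p_{\ga}})\wedge(\seq{p_{\ga}}\sd\seq{p_0})=0$ pin down $\seq{p_0}\sd\seq{p_{\ga}}$ up to the choices, sandwiched between $\seq{(p_0-p_{\ga})^+}$ and its double pseudocomplement. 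The key point to extract is that each $\seq{p_0}\sd\seq{p_{\ga}}$ is represented by some $t_{\ga}\in G^+$, hence is a vector-lattice term in finitely many projections $p_{\gc}$, $\gc\in\Phi(\ga)\subseteq\fin{\go_1}$; this defines the function $\Phi\colon\go_1\to\fin{\go_1}$ to feed into Lemma~\ref{L:kfree}.

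Next I would apply Lemma~\ref{L:kfree} to this~$\Phi$ (with $X$ a suitable cofinal set, e.g. $X=\go_1$), obtaining ascending sequences $\vecm{\tau_i}{i<\go_1}$ and $\vecm{\ga_i}{i<\go_1}$ so that $\Phi(\ga_k)\subseteq\tau_0\cup(\tau_{k+1}\setminus\tau_k)$; thus the supports of $t_{\ga_0}$ and $t_{\ga_1}$ (say) are ``almost disjoint'', overlapping only inside the fixed initial segment~$\tau_0$. Now I would exploit the self-similarity of~$G$ furnished by Lemma~\ref{L:ExtG2G}: there is an $\ell$-endomorphism of~$G$ sending $p_0\mapsto p_0$, fixing~$p_{\go_1}$, and swapping the ``block'' of coordinates around $\ga_0$ with the block around $\ga_1$ while respecting the constraints $0\leq p_{\ga}\leq p_{\go_1}$ and $p_{\gc}\leq 2p_{\gb}$ for $0<\gc<\gb$ (this is exactly why the factor~$2$ and the interval $[0,1]$ were built into~$\gO$). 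Applying such an endomorphism, or rather comparing $\seq{p_0}\sd\seq{p_{\ga_0}}$ with $\seq{p_0}\sd\seq{p_{\ga_1}}$, left isotonicity would force $\seq{p_0}\sd\seq{p_{\ga_0}}\leq\seq{p_0}\sd\seq{p_{\ga_1}}$ whenever $\seq{p_{\ga_0}}\leq\seq{p_{\ga_1}}$ and vice versa; but by Lemma~\ref{L:IdcG} the ideals $\seq{p_{\ga_0}}$ and $\seq{p_{\ga_1}}$ are incomparable, so one must instead play the two comparisons against the pseudocomplement identities of Lemma~\ref{L:pscom} to manufacture a contradiction. Concretely, one shows that left isotonicity plus the almost-disjointness of supports would make $\seq{p_0}\sd\seq{p_{\ga_0}}$ and $\seq{p_0}\sd\seq{p_{\ga_1}}$ simultaneously below a common element that is incompatible (in the pseudocomplement sense) with both $\seq{p_{\ga_0}}\sd\seq{p_0}$ and $\seq{p_{\ga_1}}$, violating $\seq{p_0}\leq\seq{p_{\ga}}\vee(\seq{p_0}\sd\seq{p_{\ga}})$.

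The main obstacle I anticipate is the middle step: converting ``almost-disjoint supports'' into an actual order-theoretic contradiction. The deviation value $\seq{p_0}\sd\seq{p_{\ga}}$ is only constrained to lie between $\seq{(p_0-p_{\ga})^+}$ and the double pseudocomplement of $\seq{(p_0-p_{\ga})^+}$, and there is a genuine gap there, so one cannot simply compute with canonical representatives; instead one has to use Lemma~\ref{L:IdcG} to test inequalities pointwise on~$\gO$, constructing explicit points $z,y\in\gO$ (as in the proof of Lemma~\ref{L:pscom}, perturbing one coordinate while freezing the finitely many others on which the relevant terms depend) that witness the failure of the isotonicity inequality. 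Getting the perturbation to simultaneously respect $0\le u_\ga\le 1$, the factor-$2$ chain condition, and the finitely many support constraints of $t_{\ga_0}$ and $t_{\ga_1}$ is where the delicacy lies, and this is precisely what the careful bookkeeping in Lemma~\ref{L:kfree}\eqref{bdPhi} (separating $\tau_0$ from the disjoint blocks $\tau_{k+1}\setminus\tau_k$) is designed to make possible. Once that point is constructed, the contradiction with a deviation axiom is immediate, and the displayed conclusion about the existence of $\ga<\gb$ and $\ga'<\gb'$ follows by reading off $\ga=\ga_0$, $\gb=\ga_1$ (relabelling so that $\ga<\gb$) in the left-isotone branch and symmetrically in the right-antitone branch.
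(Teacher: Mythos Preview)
Your outline has the right scaffolding---represent $\seq{p_0}\sd\seq{p_{\ga}}$ by some $s_{\ga}\in G^+$ with finite support $\Phi(\ga)$, invoke Lemma~\ref{L:kfree}, then use an $\ell$-endomorphism from Lemma~\ref{L:ExtG2G}---but the core mechanism is missing, and two ordinals $\ga_0,\ga_1$ will not suffice.

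The paper's argument is quantitative. First one pigeonholes: from Lemma~\ref{L:pscom} and the deviation axioms one gets, for each~$\ga$, integers $m_{\ga},n_{\ga}$ with $\seq{(p_0-m_{\ga}p_{\ga})^+}\leq\seq{s_{\ga}}\leq\seq{(n_{\ga}p_0-p_{\ga})^+}$, and then passes to a cofinal $X\subseteq\go_1$ on which $m_{\ga}=m$ and $n_{\ga}=n$ are constant. Your proposal skips this step and applies Lemma~\ref{L:kfree} with $X=\go_1$, which loses the uniform bounds one needs later. Second---and this is the real engine---one chooses $k$ with $2^{k-1}>mn$ and keeps $k$ ordinals $\ga_0<\dots<\ga_{k-1}$ from Lemma~\ref{L:kfree}, not just two. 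The endomorphism~$f$ does not ``swap blocks''; it \emph{collapses} every coordinate in $\tau_0$ to $p_0$ and every coordinate in $\tau_{i+1}\setminus\tau_i$ to $p_{\ga_i}$, so that each $f(s_{\ga_i})$ lives in the sublattice generated by $\set{p_0,p_{\ga_i},p_{\go_1}}$. One then proves by induction on $i$ that $\seq{f(s_{\ga_{k-i}})}\geq\seq{(2^{i-1}p_0-mp_{\ga_{k-i}})^+}$: the factor~$2$ built into~$\gO$ is exploited at each step (via a point $y$ with $y_{\gb}=\tfrac12 z_{\ga_{k-i}}$ for $\gb>\ga_{k-i}$) to double the coefficient of $p_0$. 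After $k$ steps this contradicts the upper bound $\seq{s_{\ga_0}}\leq\seq{(np_0-p_{\ga_0})^+}$ because $2^{k-1}>mn$. Your single-step ``almost-disjoint supports'' picture cannot bridge the gap between the lower bound $\seq{(p_0-mp_{\ga})^+}$ and the upper bound $\seq{(np_0-p_{\ga})^+}$; that gap is real, and only the iterated doubling closes it.

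Two smaller points: the ideals $\seq{p_{\ga_0}}$ and $\seq{p_{\ga_1}}$ are \emph{not} incomparable---indeed $p_{\ga_0}\leq 2p_{\ga_1}$ in~$G$, so $\seq{p_{\ga_0}}\leq\seq{p_{\ga_1}}$, and this chain structure is precisely what makes the (anti)tone hypothesis bite. Also, you have the two cases reversed: varying the second argument in $\seq{p_0}\sd\seq{p_{\ga}}$ tests \emph{right antitonicity}, not left isotonicity; the left-isotone case looks at $\seq{p_{\ga}}\sd\seq{p_0}$ and is handled by the symmetric version of the same doubling argument.
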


\begin{proof}
Since~$G$ is an \lsgrp\ of a power of~$\QQ$ it is Archimedean.
Moreover, $p_{\go_1}$ is a strong unit of~$G$.

Let~$\sd$ be a deviation on~$\Idc{G}$ and suppose first that
 \begin{equation}\label{Eq:AntitoneAssumption}
 \seq{p_0}\sd\seq{p_{\gb}}
 \leq\seq{p_0}\sd\seq{p_{\ga}}\quad
 \text{whenever }0<\ga<\gb<\go_1\,.
 \end{equation}
Whenever $0<\ga<\go_1$ let $s_{\ga}\in G^+$
with
$\seq{s_{\ga}}=\seq{p_0}\sd\seq{p_{\ga}}$.
There exists a finite subset~$\Phi(\ga)$ of~$\go_1$ such that $s_{\ga}$ belongs to the vector sublattice of $G$ generated by
$\setm{p_{\gb}}{\gb\in\Phi(\ga){\cup\set{p_{\go_1}}}}$.

It is well known that the elements $\seq{(p_0-mp_{\ga})^+}$ (with $m\in\go$) form a coinitial subset in the set
$\setm{\bx\in\Idc{G}}{\seq{p_0}\leq\seq{p_{\ga}}\vee\bx}$. 
Hence, $\seq{(p_0-mp_{\ga})^+}\le
\seq{s_{\ga}}$ for some positive $m$.
Similarly, $\seq{(p_{\ga}-np_0)^+}\leq\seq{p_{\ga}}\sd\seq{p_0}$ for some positive~$n$.
Hence,
 \[
 \seq{s_{\ga}}\wedge\seq{(p_{\ga}-np_0)^+}\leq
 (\seq{p_0}\sd\seq{p_{\ga}})
 \wedge(\seq{p_{\ga}}\sd\seq{p_0})=0\,.
 \]
By Lemma~\ref{L:pscom}, $\seq{s_{\ga}}\le
\seq{(np_0-p_{\ga})^+}$.
We have thus obtained the following.

\setcounter{claim}{0}

\begin{claim}\label{Cl:Encadrtga}
Whenever $0<\ga<\go_1$ there are positive integers~$m$, $n$ such that
 \[
 \seq{(p_0-mp_{\ga})^+}\leq
 \seq{s_{\ga}}\le\seq{(np_0-p_{\ga})^+}\,.
 \]
\end{claim}

For all positive integers~$m$, $n$ denote
 \[
 X_{m,n}\eqdef
 \setm{\ga\in\go_1\setminus\set{0}}{\seq{(p_0-mp_{\ga})^+}\leq
\seq{s_{\ga}}\le\seq{(np_0-p_{\ga})^+}}\,.
\]
By Claim~\ref{Cl:Encadrtga}, at least one of those sets must be cofinal in~$\go_1$;
we choose such~$(m,n)$ and set $X\eqdef X_{m,n}$.
Further, we pick a positive integer~$k$ such that $2^{k-1}>mn$.

Applying Lemma \ref{L:kfree} with~$X$ and~$\Phi$ defined above, we find corresponding ordinals~$\tau_i$ and~$\ga_i$ (for $i<\go_1$).
We may assume that $\tau_0>0$.
Let us keep only those~$\tau_i$, $\ga_i$ with $0\leq i<k$, and reset $\tau_k\eqdef\go_1$.
Let $f\colon\setm{p_{\gc}}{\gc\in\go_1}\to G$ be defined as follows:
 \begin{equation}\label{Eq:Themapf}
 f(p_{\gc})\eqdef\begin{cases}
 p_0\,,&\text{if }\gc\in\tau_0\,,\\
 p_{\ga_i}\,,&\text{if }\gc\in\tau_{i+1}\setminus\tau_i
 \end{cases}\quad
 \text{whenever }\gc\in\go_1\,.
 \end{equation}
Since $0\leq f(p_{\ga})\leq p_{\go_1}$ for any~$\ga\in\go_1$ and since $f(p_{\gc})\leq 2f(p_{\gd})$ whenever $0<\gc\le\gd$, the map~$f$ can, by Lemma~\ref{L:ExtG2G}, be
extended to an $\ell$-en\-do\-mor\-phism of~$G$, which we shall also denote by~$f$, such that $f(p_{\go_1})=p_{\go_1}$.
The definition of~$\Phi$ ensures that each~$f(s_{\ga_i})$ belongs to the vector sublattice of~$G$ generated by $\set{p_0,p_{\ga_i},p_{\go_1}}$.

\begin{claim}\label{Cl:Ineqfsgak-i}
The inequality $\seq{f(s_{\ga_{k-i}})} \geq \seq{(2^{i-1}p_0-mp_{\ga_{k-i}})^+}$ holds whenever\linebreak $1\leq i\leq k$.
\end{claim}

\begin{cproof}
We proceed by induction. For $i=1$ our statement follows from
the inequality $\seq{(p_0-mp_{\ga_{k-1}})^+}\le
\seq{s_{\ga_{k-1}}}$.
(Notice that $f(p_0)=p_0$ and $f(p_{\ga_j})=p_{\ga_j}$ for every $j$, thus
$f((p_0-mp_{\ga_{k-1}})^+)=(p_0-mp_{\ga_{k-1}})^+$.)

Now let $i>1$ and suppose for contradiction that $\seq{f(s_{\ga_{k-i}})} \ngeq \seq{(2^{i-1}p_0-mp_{\ga_{k-i}})^+}$.
By Lemma~\ref{L:IdcG}, there is $ z\in\gO$ such that
$f(s_{\ga_{k-i}})( z)\leq 0$ and $2^{i-1}z_0>mz_{\ga_{k-i}}$.
We define $y\in\gO$ by
 \begin{equation}\label{Eq:3casesy}
 y_{\gb}\eqdef\begin{cases}
 z_0\,,&\text{if }\gb=0\,,\\
 z_{\ga_{k-i}}\,,&\text{if }0<\gb\leq\ga_{k-i}\,,\\
 \frac{1}{2}z_{\ga_{k-i}}\,,&\text{if }\gb>\ga_{k-i}\,,
 \end{cases}
 \quad\text{whenever }\gb<\go_1\,.
 \end{equation}
Since $f(s_{\ga_{k-i}})$ belongs to the vector sublattice of~$G$ generated by $\set{p_0,p_{\ga_{k-i}},p_{\go_1}}$, we get $f(s_{\ga_{k-i}})(y)=f(s_{\ga_{k-i}})(z)\leq 0$.
Our assumption~\eqref{Eq:AntitoneAssumption} entails the inequality $\seq{s_{\ga_{k-i}}}\ge\seq{s_{\ga_{k-i+1}}}$, hence 
$\seq{f(s_{\ga_{k-i}})}\ge\seq{f(s_{\ga_{k-i+1}})}$ and therefore
$ f(s_{\ga_{k-i+1}})(y)\leq\nobreak 0$. Further,
 \[
 2^{i-2}y_0-my_{\ga_{k-i+1}}=
 \frac{1}{2}(2^{i-1}z_0-mz_{\ga_{k-i}})>0\,,
 \]
which contradicts the induction hypothesis $\seq{f(s_{\ga_{k-i+1}})} \geq \seq{(2^{i-2}p_0-mp_{\ga_{k-i+1}})^+}$.
\end{cproof}

For $i=k$, Claim~\ref{Cl:Ineqfsgak-i} yields $\seq{f(s_{\ga_0})} \geq \seq{(2^{k-1}p_0-mp_{\ga_0})^+}$.
On the other hand,
$\ga_0\in X_{m,n}$ implies  $\seq{s_{\ga_0}}\le\seq{(np_0-p_{\ga_0})^+}$, hence
$\seq{f(s_{\ga_0})}\le\seq{f((np_0-p_{\ga_0})^+)}=\seq{(np_0-p_{\ga_0})^+}$.
Therefore,
 \[
 \seq{(2^{k-1}p_0-mp_{\ga_0})^+}\le
 \seq{(np_0-p_{\ga_0})^+}\,.
 \]
However, this inequality does not hold.
Indeed, by Lemma~\ref{L:ZM2Z} there exists $z\in\gO$ with $z_0=1/n$ and $z_{\ga_0}=1$.
Then $nz_0-z_{\ga_0}=0$, while
 \[
 2^{k-1}z_0-m z_{\ga_0}=2^{k-1}/n-m>0
 \]
according to the choice of~$k$.
This contradiction shows that~\eqref{Eq:AntitoneAssumption} cannot hold.

The ``left isotone'' case is handled similarly.
Suppose that
 \begin{equation}\label{Eq:IsotoneAssumption}
 \seq{p_{\ga}}\sd\seq{p_0}
 \leq\seq{p_{\gb}}\sd\seq{p_0}\quad
 \text{whenever }0<\ga<\gb<\go_1\,.
 \end{equation}
Whenever $0<\ga<\go_1$ let $t_{\ga}\in G^+$
with
$\seq{t_{\ga}}=\seq{p_{\ga}}\sd\seq{p_0}$.
There exists a finite subset~$\Phi(\ga)$ of~$\go_1$ such that $t_{\ga}$ belongs to the vector sublattice of $G$ generated by
$\setm{p_{\gb}}{\gb\in\Phi(\ga)\cup\set{\go_1}}$.

The proof of the following claim is then, \emph{mutatis mutandis}, identical to the one of Claim~\ref{Cl:Encadrtga}, thus we shall omit it.

\begin{claim}\label{Cl:Encadrsga}
Whenever $0<\ga<\go_1$ there are positive integers~$m$, $n$ such that
 \[
 \seq{(p_{\ga}-mp_0)^+}\leq
 \seq{t_{\ga}}\leq\seq{(np_{\ga}-p_0)^+}\,.
 \]
\end{claim}
For all positive integers~$m$, $n$ we now denote
 \[
 Y_{m,n}\eqdef
 \setm{\ga\in\go_1\setminus\set{0}}{\seq{(p_{\ga}-mp_0)^+}\leq
\seq{t_{\ga}}\leq\seq{(np_{\ga}-p_0)^+}}\,.
\]
By Claim~\ref{Cl:Encadrsga}, at least one of those sets must be cofinal in~$\go_1$;
we choose such~$(m,n)$ and set $Y\eqdef Y_{m,n}$.
Again, we pick a positive integer~$k$ such that $2^{k-1}>mn$.

Applying Lemma \ref{L:kfree} with~$Y$ and~$\Phi$ defined above, we find corresponding ordinals~$\tau_i$ and~$\ga_i$ (for $i<\go_1$) with $\tau_0>0$.
Again, we keep only those~$\tau_i$, $\ga_i$ with $0\leq i<k$, and reset $\tau_k\eqdef\go_1$.
The map $f\colon\setm{p_{\gc}}{\gc\in\go_1}\to G$ defined as in~\eqref{Eq:Themapf} can again be extended to a unique $\ell$-en\-do\-mor\-phism of~$G$ such that $f(p_{\go_1})=p_{\go_1}$, and the definition of~$\Phi$ ensures that each~$f(t_{\ga_i})$ belongs to the vector sublattice of~$G$ generated by $\set{p_0,p_{\ga_i},p_{\go_1}}$.

\begin{claim}\label{Cl:Ineqftgak-i}
The inequality $\seq{f(t_{\ga_{k-i}})} \leq \seq{(np_{\ga_{k-i}}-2^{i-1}p_0)^+}$ holds whenever\linebreak $1\leq i\leq k$.
\end{claim}

\begin{cproof}
We proceed by induction, following the lines of the proof of Claim~\ref{Cl:Ineqfsgak-i}.
For $i=1$ our statement follows from
the inequality $\seq{t_{\ga_{k-1}})} \leq \seq{(np_{\ga_{k-1}}-p_0)^+}$.

Now let $i>1$ and suppose for contradiction that $\seq{f(t_{\ga_{k-i}})} \nleq \seq{(np_{\ga_{k-i}}-2^{i-1}p_0)^+}$.
By Lemma~\ref{L:IdcG}, there is $z\in\gO$ such that
$f(t_{\ga_{k-i}})( z)>0$ whereas $nz_{\ga_{k-i}}\leq 2^{i-1}z_0$.
We define~$y\in\gO$ as in~\eqref{Eq:3casesy} and observe again that $f(t_{\ga_{k-i}})(y)=f(t_{\ga_{k-i}})(z)>0$.
Our assumption~\eqref{Eq:IsotoneAssumption} entails $\seq{t_{\ga_{k-i}}}\leq\seq{t_{\ga_{k-i+1}}}$, hence 
$\seq{f(t_{\ga_{k-i}})}\leq\seq{f(t_{\ga_{k-i+1}})}$ and therefore
$ f(t_{\ga_{k-i+1}})(y)>0$.
Further,
 \[
 ny_{\ga_{k-i+1}}-2^{i-2}y_0=
 \frac{1}{2}(nz_{\ga_{k-i}}-2^{i-1}z_0)\leq0\,,
 \]
which contradicts the induction hypothesis $\seq{f(t_{\ga_{k-i+1}})} \leq \seq{(np_{\ga_{k-i+1}}-2^{i-2}p_0)^+}$.
\end{cproof}

For $i=k$, Claim~\ref{Cl:Ineqftgak-i} yields $\seq{f(t_{\ga_0})} \leq \seq{(np_{\ga_0}-2^{k-1}p_0)^+}$.
On the other hand,
$\ga_0\in Y_{m,n}$ implies $\seq{(p_{\ga_0}-mp_0)^+}\leq\seq{t_{\ga_0}}$, hence also
$\seq{(p_{\ga_0}-mp_0)^+}\leq\seq{f(t_{\ga_0})}$.
Therefore,
 \[
\seq{(p_{\ga_0}-mp_0)^+}\le
 \seq{(np_{\ga_0}-2^{k-1}p_0)^+}\,.
 \]
However, this inequality does not hold.
Indeed, by Lemma~\ref{L:ZM2Z} there exists $z\in\gO$ with $z_0=2^{1-k}$ and $z_{\ga_0}=1/n$.
Then $nz_{\ga_0}-2^{k-1}z_0=0$, while
 \[
 z_{\ga_0}-mz_0=1/n-m2^{1-k}>0
 \]
according to the choice of~$k$.
This contradiction shows that~\eqref{Eq:IsotoneAssumption} cannot hold, thus concluding the proof of Theorem~\ref{T:NoIsoAnti}.
\end{proof}

Finally, we would like to state some open problems.

\begin{problem}
Is every finitely separable \cn\ lattice Cevian?
\end{problem}

By Theorem~\ref{T:DevFS}, every finitely separable \cn\ lattice has a monotone deviation; we do not know whether that deviation can be made Cevian.
So far, the only known examples of non-Cevian \cn\ lattices are due to Wehrung \cite[\S~7]{Ceva} and \cite[\S~7]{MVRS}, the former lattice satisfying the additional property of having \emph{countably based differences}.
We do not know whether any of those lattices is finitely separable.

\begin{problem}
Is it possible that a \cn\ lattice has a deviation monotone in one variable, but none in both variables?
\end{problem}

Recall that for the \lgrp~$G$ constructed above, the lattice $\Idc{G}$ does not have a deviation monotone in either variable.

\begin{problem} Is it possible, for an Abelian \lgrp~$G$,  that $\Idc{G}$ has a monotone deviation but no isotone Belluce section?
\end{problem}

Notice that in the proof of Corollary~\ref{C:flg} we actually constructed  an isotone Belluce section for any free Abelian \lgrp.

\section{Declarations}

{\bf Authors' Contribution:}

Both authors contributed to the results in this paper.
\vskip5mm

{\bf Conflict of Interest:}

On behalf of all authors, the corresponding author states that there is no conflict of interest.\vskip5mm

{\bf Availability of Data and Materials:}

Data sharing is not applicable to this article as no datasets were generated or analysed during the current study.\vskip5mm

{\bf Funding:}

The first author was supported by Slovak VEGA grant 1/0152/22.


\providecommand{\noopsort}[1]{}\def\cprime{$'$}
  \def\polhk#1{\setbox0=\hbox{#1}{\ooalign{\hidewidth
  \lower1.5ex\hbox{`}\hidewidth\crcr\unhbox0}}} \def\cprime{$'$}
  \def\cprime{$'$} \def\cprime{$'$} \def\cprime{$'$} \def\cprime{$'$}
\providecommand{\bysame}{\leavevmode\hbox to3em{\hrulefill}\thinspace}
\providecommand{\MR}{\relax\ifhmode\unskip\space\fi MR }
\providecommand{\MRhref}[2]{%
  \href{http://www.ams.org/mathscinet-getitem?mr=#1}{#2}
}
\providecommand{\href}[2]{#2}

\end{document}